\documentclass[a4paper,10pt]{article}
\usepackage[utf8]{inputenc}
\usepackage{amsmath}
\usepackage{amssymb}
\usepackage{amsthm}
\usepackage{amsfonts}
\usepackage{latexsym}
\theoremstyle{plain}
\newtheorem{thm}{Theorem}[section]
\newtheorem{cor}{Corollary}[section]
\newtheorem{lem}{Lemma}[section]
\newtheorem{prop}{Proposition}[section]
\theoremstyle{definition}
\newtheorem{defn}{Definition}[section]

\theoremstyle{remark}

\newtheorem{rem}{Remark}[section]

\title{Lower order asymptotics for Szeg\"{o} and Toeplitz kernels under Hamiltonian
circle actions}
\author{Roberto Paoletti\footnote{\noindent{\bf Address:}
Dipartimento di Matematica e Applicazioni, Universit\`a degli Studi
di Milano Bicocca, Via R. Cozzi 53, 20125 Milano,
Italy; {\bf e-mail}: roberto.paoletti@unimib.it }}
\date{}
\begin{document}

\maketitle

\begin{abstract}
We consider a natural variant of Berezin-Toeplitz quantization of compact
K\"{a}hler manifolds, in the presence of a Hamiltonian circle action lifting
to the quantizing line bundle. Assuming that the moment map is positive, we study
the diagonal asymptotics of the associated Szeg\"{o} and Toeplitz operators,
and specifically their relation to the moment map and to the geometry of a certain
symplectic quotient. When the underlying action is trivial and the moment map is taken to be
identically equal to one, this scheme coincides with the usual Berezin-Toeplitz quantization.
This continues previous work on near-diagonal scaling asymptotics of equivariant
Szeg\"{o} kernels in the presence of Hamiltonian torus actions.
\end{abstract}

\section{Introduction}

The object of this paper are the asymptotics of Szeg\"{o} and Toeplitz
operators in a non-standard version of the Berezin-Toeplitz quantization of
a complex projective K\"{a}hler manifold $(M,J,\omega)$. 

In Berezin-Toeplitz quantization,
one typically adopts as \lq quantum spaces\rq\,
the Hermitian spaces $H^0\left(M,A^{\otimes k}\right)$
of global holomorphic sections of higher powers of the polarizing line bundle
$(A,h)$; here $(A,h)$ is a positive, hence ample,
Hermitian holomorphic line bundle on $M$. Quantum observables, on the other
hand, correspond to Toeplitz operators associated to real $\mathcal{C}^\infty$
functions on $M$.

Here we assume given a Hamiltonian action 
$\mu^M$ of the circle group $U(1)=\mathbf{T}^1$ on $M$, with positive moment
map $\Phi$, and admitting a metric preserving linearization to 
$A$. It is then natural to replace the spaces $H^0\left(M,A^{\otimes k}\right)$
with certain new \lq quantum spaces\rq \,which arise by decomposing the Hardy space associated to
$A$ into isotypes for the action; these are generally not spaces
of sections of powers of $A$.  
One is thus led to consider analogues of the usual constructs of Berezin-Toeplitz
quantization. 
In particular, it is interesting to investigate
how the symplectic geometry of the underlying action, encapsulated in $\Phi$, 
influences the semiclassical asymptotics in this quantization scheme. 

This picture generalizes the usual Berezin-Toeplitz quantization 
of $(M,J,\omega)$, for one falls back on the standard case
by considering the trivial action of $\mathbf{T}^1$  
on $M$ with moment map $\Phi=1$. Then the lifted action is essentially fiberwise
scalar multiplication,
and the corresponding equivariant spaces are the usual spaces of global holomorphic
sections.

This theme was considered in \cite{pao-IJM}
for general Hamiltonian torus actions; the focus there was on near diagonal scaling
asymptotics of the associated equivariant Szeg\"{o} kernels. Here we shall restrict 
our analysis to 
circle actions, and investigate the lower order terms of these asymptotic expansion, 
as well as of their analogues for 
Toeplitz operators. 

In the usual standard setting of 
Berezin-Toeplitz quantization, a huge
amount of work has been devoted to these themes, 
involving a variety of approaches and techniques; see for example
(obviously with no pretense of being exhaustive) \cite{ae}, \cite{berezin}, 
\cite{bordermann-meinrenken-schlichenmaier}, \cite{bdm-g}, 
\cite{cahen-gutt-rawnsley-I}, \cite{cahen-gutt-rawnsley-II}, \cite{catlin},
\cite{charles}, \cite{englis0},
\cite{guillemin-star}, \cite{lu},
\cite{mm1}, \cite{mm2}, \cite{schlichenmaier}, 
\cite{sz}, \cite{tian},
\cite{zelditch-index-dynamics}, \cite{zelditch-theorem-of-Tian}, and references therein. 

In the present paper, we shall follow the general approach to quantization based 
on the (microlocal) analysis of the Szeg\"{o}
kernel on the circle bundle X of $A^\vee$; this train of thought was first introduced in 
the grounding work \cite{bdm-g},
and  afterwards explored
by many authors. 
We shall specifically also build on ideas and
results from
\cite{englis}, \cite{loi-1},
and \cite{karabegov-schlichenmaier}; in fact, the present
paper was considerably inspired by the concise approach in \cite{loi-1} to the derivation of the lower
order terms in the TYZ expansion for real-analytic metrics.

Now let us make our discussion more precise. Let $(M,J)$ be a connected
complex $d$-dimensional projective manifold, and let $A$ be an ample holomorphic
line bundle on $M$, with dual line bundle $A^\vee$ and projection 
$\widehat{\pi}:A^\vee\rightarrow M$.

There is an Hermitian metric $\ell_A$ on $A$ such that the unique
covariant derivative $\nabla_A$ on $A$ compatible with $\ell_A$ and the holomorphic structure
has curvature $\Theta_A=-2i\,\omega$, where $\omega$  is a K\"{a}hler form on $M$.
Then $dV_M=:\omega^{\wedge d}/d!$ is a volume form on $M$. 

Let $X\subseteq A^\vee$ be the unit circle bundle, 
with projection 
$\pi=\left.\widehat{\pi}\right|_X:X\rightarrow M$. 
Then $\nabla$ corresponds to a connection contact form $\alpha\in \Omega^1(X)$, 
such that $d\alpha=2\,\pi^*(\omega)$, and $dV_X=:(1/2\pi)\,\alpha\wedge \pi^*(dV_M)$
is a volume form on $X$. Let $L^2(X)=:L^2(X,dV_X)$, and identify functions
with densities and half-densities on $X$. Also, let $H(X)=:\ker \big(\overline{\partial}_b\big)
\cap L^2(X)$ be the Hardy space of $X$, where $\overline{\partial}_b$ is the 
Cauchy-Riemann operator on $X$.

Suppose that the action $\mu^M:\mathbf{T}^1\times M\rightarrow M$ is holomorphic with respect to
$J$ and Hamiltonian with respect to $2\,\omega$, with moment map $\Phi:M\rightarrow \mathbb{R}$; suppose
furthermore that $(\mu^M,\Phi)$ can be linearized to a
holomorphic action $\mu^A$ 
on $A$ leaving $\ell_A$ invariant. Then $\mathbf{T}^1$ acts on $X$ as a group
of contactomorphisms under the naturally induced action $\mu^X:\mathbf{T}^1\times X\rightarrow X$
lifting $\mu^M$.

Infinitesimally, the relation between $\mu^M$ and $\mu^X$ is as follows. 
Let $\partial/\partial \theta\in \mathfrak{X}(X)$ be the infinitesimal generator
of the action of $\mathbf{T}^1$ on $X$ given by
fiberwise scalar multiplication, $\mathrm{mult}:\big(e^{i\theta},x\big)\mapsto e^{i\theta}\cdot x$;
also, let $\xi_M\in \mathfrak{X}(M)$ 
be the infinitesimal generator of $\mu^M$, with horizontal lift $\xi_M^\sharp$ to $X$.
Then the infinitesimal generator $\xi_X\in  \mathfrak{X}(X)$ of $\mu^X$ is given by
\begin{equation}
 \label{eqn:infinitesimal-lift}
\xi_X=\xi_M^\sharp-\Phi\,\dfrac{\partial}{\partial \theta},
\end{equation}
where we write $\Phi$ for $\Phi\circ \pi$. Thus $\mu^X$ crucially depends on the choice of $\Phi$; for example, when 
$\mu^M$ is trivial choosing $\Phi=0$ yields the trivial action on $X$, while if $\Phi=1$ we
obtain the action 
\begin{equation}
\label{eqn:action-S-X}
  \nu^X:\mathbf{T}^1\times X\rightarrow X,\,\,\,\big(e^{i\theta},x\big)\mapsto e^{-i\theta}\cdot x.
\end{equation}

Since $\mu^X$ preserves $\alpha$ and is a lifting of the holomorphic action $\mu^M$, it leaves 
$H(X)$ invariant; therefore it determines a unitary action of $\mathbf{T}^1$ on $H(X)$. Thus
$H(X)$ equivariantly and unitarily decomposes into the Hilbert direct sum of its isotypes,
\begin{equation}
 \label{eqn:isotypes}
H^\mu_k(X)=:\left\{f\in H(X)\,:\,f\left(\mu^X_{g^{-1}}(x)\right)
=g^k\,f(x)\,\,\,\forall\, (g,x)\in \mathbf{T}^1\times X\right\},
\end{equation}
where $k\in \mathbb{Z}$.
If $\mu^M$ is trivial and $\Phi=1$, (\ref{eqn:isotypes}) is the standard
$k$-th equivariant Szeg\"{o} kernel $H_k(X)$, which is unitarily isomorphic
to $H^0\left(M,A^{\otimes k}\right)$ in a natural manner. However, in general $H^\mu_k(X)$
is not a space of sections of powers of $A$, and may even be 
infinite-dimensional. For example, if $\mu^M$ is trivial and $\Phi=0$ then $H^\mu_0(X)=H(X)$,
while $H^\mu_k(X)$ is the null space for $k\neq 0$.

Nonetheless, if $\Phi>0$ then $H^\mu_k(X)$ is finite-dimensional for any $k\in \mathbb{Z}$,
and is the null space if $k<0$ \cite{pao-IJM}; in particular, the orthogonal projector
$\Pi^\mu_k:L^2(X)\rightarrow H^\mu_k(X)$ is a smoothing operator, with Schwartz kernel 
$\Pi^\mu_k(\cdot,\cdot)\in \mathcal{C}^\infty(X\times X)$  given by
\begin{equation}
 \label{eqn:kernel-szego-equiv}
\Pi^\mu_k(x,y)=\sum_j\,s_j^{(k)}(x)\cdot \overline{s_j^{(k)}(y)}\,\,\,\,\,\,\,\,\,(x,y\in X)
\end{equation}
for any choice of an orthonormal basis $\left(s_j^{(k)}\right)$ of $H^\mu_k(X)$.
The diagonal restriction $x\mapsto \Pi^\mu_k(x,x)$ descends to a well-defined $\mathcal{C}^\infty$
function on $M$.

Also, if $\Phi>0$ then $\xi_X(x)\neq 0$ for every $x\in X$ by (\ref{eqn:infinitesimal-lift}); hence
$\mu^X$ is locally free, and every $x\in X$ has finite stabilizer $T_x\subseteq \mathbf{T}^1$. As $\mu^X$
commutes with scalar multiplication, $T_x$ only depends on $m=\pi(x)\in M$; we shall emphasize this
by denoting $T_x$ by $T_m$. For instance, $T_m=\{1\}$ for every $m\in M$ if $\mu^M$ is trivial and $\Phi=1$.
While $T_m$ is generally not constant on $M$, it equals a fixed finite subgroup
$T_\mathrm{gen}\subseteq \mathbf{T}^1$ on a dense open subset $M'\subseteq M$.
Then $T_\mathrm{gen}$ stabilizes every $x\in X$; after passing to the quotient,
we may reduce to the case $T_\mathrm{gen}=\{1\}$.
By Corollary 1.1 of \cite{pao-IJM}, at a point $x\in X$ where $T_m$ is trivial
$\Pi^\mu_k(x,x)$ satisfies an asymptotic expansion as $k\rightarrow +\infty$ of the form
\begin{equation}
 \label{eqn:asymptotic-expansion-szego-special-case}
\Pi^\mu_k(x,x)\sim \left(\frac k\pi\right)^d\,\sum_{j\ge 0}k^{-j}\,S_j^\mu(m),
\end{equation}
where $S_0^\mu(m)=\Phi(m)^{-(d+1)}$. Here we shall focus on the lower order terms 
$S_j^\mu$.

More generally, given a real $f\in \mathcal{C}^\infty(M)$, one can consider
the associated Toeplitz operators
$T^\mu_k[f]=:\Pi^\mu_k\circ M_f\circ \Pi^\mu_k$, viewed as self-adjoint endomorphisms 
of $H^\mu_k(X)$; here $M_f:L^2(X)\rightarrow L^2(X)$ is multiplication by $f\circ \pi$.
Assuming $\Phi>0$, this is also a smoothing operator, whose distributional kernel may be expressed as
\begin{eqnarray}
 \label{eqn:kernel-topelitz-equiv}
T^\mu_k[f](x,x')&=&\int_X\Pi^\mu_k(x,y)\,f(y)\,\Pi^\mu_k(y,x')\,dV_X(y)\nonumber\\
&=&\sum_j T^\mu_k[f]\big(s_j^{(k)}\big)(x)\cdot \overline{s_j^{(k)}(y)}\,\,\,\,\,\,\,\,\,\,\,(x,y\in X),
\end{eqnarray}
where we write $f(y)$ for $f\big(\pi(y)\big)$.
The diagonal restriction $x\mapsto T^\mu_k[f](x,x)$ also descends to $M$.
We shall see that $T^\mu_k[f](x,x')$ has near diagonal scaling asymptotics (that is, for 
$x\rightarrow x'$) analogous to
those of $\Pi^\mu_k$ in Theorem 1 of \cite{pao-IJM}, and investigate the lower order terms
in the asymptotics of the diagonal restriction $T^\mu_k[f](x,x)$. We shall then derive from this 
an asymptotic expansion for an \lq equivariant Berezin transform\rq\,, and consider the 
relation between commutators of Toeplitz operators and Poisson brackets of the corresponding Hamiltonians.
Before describing our results in detail, we need to specify the geometric setting somewhat.

We shall assume without loss that $T_\mathrm{gen}$ 
is trivial; then $\mu^X$ is free 
on a dense $\nu^X\times \mu^X$-invariant open subset $X'\subseteq X$ 
(since $\nu^X$ - given by (\ref{eqn:action-S-X}) - and $\mu^X$ commute, 
we may consider the product action). Thus $M'=:\pi(X')\subseteq M$
is also open and dense.

The quotient $N=X/\mathbf{T}^1$ is an orbifold, and
the dense open subset 
$N'=:X'/\mathbf{T}^1\subseteq N$ is a manifold; 
the restricted projection $\kappa:X'\rightarrow N'$ is a circle bundle, and passing from
$\pi$ to $\kappa$ the roles of $\mu^X$ and $\nu^X$ get interchanged.

More precisely, $\beta=:\alpha/\Phi$ is a connection 1-form for 
$\kappa$, defining the same horizontal distribution as $\alpha$, 
and there is on $N'$ a naturally induced K\"{a}hler structure
$(N',I,\eta)$ with
$d\beta=2\,\kappa^*(\eta)$, and if $\omega$ is real-analytic then so is $\eta$.
Furthermore, $\nu^X$ descends to an action $\nu^N:\mathbf{T}^1\times N'\rightarrow N'$, which turns out
to be holomorphic with respect to $I$ and Hamiltonian with respect to $2\,\eta$. If as generating 
Hamiltonian for $\nu^N$ we choose $\Phi^{-1}$, descended to a function on $N'$, $\nu^X$ is the 
corresponding contact lift of $\nu^N$ to $(X',\beta)$ in the sense of (\ref{eqn:infinitesimal-lift}).

Every $\mu^M$-invariant $\mathcal{C}^\infty$ function $f=f(m)$ on $M$
lifts to $\nu^X\times \mu^X$-invariant function $f=f(x)$ on $X$, and then descends to a $\nu^N$-invariant
$\mathcal{C}^\infty$ function $f=f(n)$ on $N'$. In the reverse direction, a $\mathcal{C}^\infty$ $\nu^N$-invariant
function $f=f(n)$ on $N'$ yields a $\mu^M$-invariant $\mathcal{C}^\infty$ function $f=f(m)$ on $M'$. 
We thus have a natural algebraic isomorphism
between spaces of invariant smooth functions:
$$
\mathcal{C}^\infty(M')^{\mu}\cong \mathcal{C}^\infty(N')^{\nu}.
$$
If $\omega$ is real-analytic, this restricts to an isomorphism between the
corresponding subspaces of invariant real-analytic functions:
$$
\mathcal{C}^\varpi (M')^{\mu}\cong \mathcal{C}^\varpi (N')^{\nu}.
$$

With this understanding,
we shall think of $\Phi$ as being defined on $M$, $X$, or $N$ according to the context, 
and drop the symbols of pull-back
or push-forward. Similarly, let $\varrho_N$ be the scalar curvature of the
K\"{a}hler structure $(N',I,2\,\eta)$; 
then $\varrho_N$ is $\nu^N$-invariant, and may be viewed as a $\mu^M$-invariant function on $M'$. 
By the same principle,
the Laplace-Beltrami operator $\Delta_N$ of $(N',I,2\,\eta)$
acts on $\mu^M$-invariant 
functions on $M'$ (see \S \ref{subsctn:kahler-stuff} for precise definitions).

An important ingredient of the present analysis is the study by
Engli\v{s}
of the asymptotics of Laplace integrals
on a real-analytic K\"{a}hler manifold. Namely, let $(g_{k\overline{l}})$ be a real-analytic K\"{a}hler
metric on an open subset $U\subseteq \mathbf{C}^d$, and suppose that $\Xi$ is a K\"{a}hler potential
for $(g_{k\overline{l}})$ on $U$. Let $\widetilde{\Xi}$ be a sesqui-holomorphic extension of
$\Xi$ to some open neighborhood  
$\widehat{U}\subseteq U\times U$ of the diagonal. Calabi's \textit{diastasis function}
is given by
\begin{equation}
\label{eqn:calabi-diastasis}
 \mathcal{D}(z,w)=:\Xi(z)+\Xi(w)-\widetilde{\Xi}(z,w)-\widetilde{\Xi}(w,z)\,\,\,\,\,\,\,
\,\,\left((z,w)\in  \widehat{U}\right);
\end{equation}
it is an intrinsic attribute of $(g_{k\overline{l}})$, that is, it does not depend on the choice of $\Xi$,
and it satisfies $\mathcal{D}(z,z)=0$ and $\mathcal{D}(z,w)>0$ if $z\neq w$  \cite{calabi}
(see also the discussions in \cite{cahen-gutt-rawnsley-II} and \cite{loi-2}).

In \cite{englis}, Engli\v{s} considers the asymptotics as $\lambda\rightarrow +\infty$
of integrals of the form
\begin{equation}
 \label{eqn:laplace-integral}
I(\lambda,y)=:\int_U e^{-\lambda\,\mathcal{D}(x,y)}\,f(x)\,g(x)\,dx,
\end{equation}
where $g=:\det [g_{k\overline{l}}]$ and $dx$ denotes the Lebesgue measure on $\mathbb{C}^d$. By Theorem 3 of
\cite{englis}, there is an asymptotic expansion of the form
\begin{equation}
 \label{eqn:englis-asymptotic-expansion}
I(\lambda,y)\sim \left(\frac \pi \lambda\right)^d\,\sum_{j\ge 0}\lambda^{-j}\,\left.R_j^U(f)\right|_y,
\end{equation}
where the $R_j^U$'s are covariant differential operators, that may be expressed in a universal manner
in terms of the metric, the
curvature tensor, and their covariant derivatives; in particular, $R_0=\mathrm{id}$ and
$R_1=\Delta_N-\varrho_N/2$ (the opposite sign convention is used in \cite{englis}
for the curvature tensor and for $\varrho_N$).
Engli\v{s} also provided an explicit description of $R_j^U$ for $j\le 3$; the higher $R_j^U$'s and their differential
geometric significance were further investigated in \cite{loi-2}, and a graph theoretic forula for them
was given in \cite{hao-xu}. Because $\mathcal{D}$ and 
the $R_l^U$'s are intrinsically defined, the expansion (\ref{eqn:englis-asymptotic-expansion})
holds globally on any real-analytic K\"{a}hler manifold $(S,g)$, in which case we shall denote
the covariant operators by $R_j^S$.

\begin{thm}
 \label{thm:lower-order-terms}
With the notation above, suppose that $\omega$ is real-analytic,
$\Phi>0$ and $T_{\mathrm{gen}}$ is trivial. 
Then the invariant functions 
$S_j^\mu:M'\rightarrow \mathbb{R}$ 
in (\ref{eqn:asymptotic-expansion-szego-special-case})
are determined as follows. First, $S_0^\mu=\Phi^{-(d+1)}$. Next, for some
$j\ge 0$ suppose inductively that 
$$
S_0^\mu,\ldots,S_j^\mu\in 
\mathcal{C}^\varpi(M')^\mu\cong \mathcal{C}^\varpi(N')^\nu 
$$ 
have been constructed, and let $\widetilde{S_0^\mu},\ldots,\widetilde{S_j^\mu}$
be their respective sesquiholomorphic extensions as elements of $\mathcal{C}^\varpi(N')^\nu$.
Define 
\begin{equation}
 \label{eqn:Z-j-general}
Z_j(n_0,n)=:\Phi(n)^{d+1}\,\sum_{a+b=j}\widetilde{S_a^\mu}(n_0,n)\,\widetilde{S_b^\mu}(n,n_0).
\end{equation}
Then, thinking of the $R_r^N$'s as acting on the variable $n$ and of $n_0$ as a parameter,
\begin{eqnarray}
 \label{eqn:S-j-general}
S_{j+1}^\mu(n_0)&=&-\Phi(n_0)^{d+1}\,\sum_{l=1}^j S^\mu_l(n_0)\,S^\mu_{j+1-l}(n_0)\nonumber\\
&&-\sum_{r=1}^{j+1}\left.R_r^N\big(Z_{j+1-r}(n_0,\cdot)\right)|_{n=n_0}.
\end{eqnarray}
\end{thm}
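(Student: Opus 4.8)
The plan is to reduce the diagonal asymptotics of $\Pi^\mu_k(x,x)$ to the asymptotics of a Laplace-type integral over the quotient $N'$, to which the Engliš expansion \eqref{eqn:englis-asymptotic-expansion} applies, and then to extract the recursion \eqref{eqn:S-j-general} by matching powers of $k$. The starting point is the reproducing identity for $\Pi^\mu_k$: since $\Pi^\mu_k$ is the orthogonal projector onto $H^\mu_k(X)$ and $H^\mu_k(X)$ is finite-dimensional, one has $\Pi^\mu_k = \Pi^\mu_k\circ\Pi^\mu_k$, so the diagonal value $\Pi^\mu_k(x,x)$ equals $\int_X |\Pi^\mu_k(x,y)|^2\, dV_X(y)$. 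Using the $\mathbf{T}^1$-equivariance of $\Pi^\mu_k$ under $\mu^X$ and the freeness of the action on $X'$, this integral can be pushed down: fibering $X'$ over $N'$ via $\kappa$ and integrating out the circle direction, the near-diagonal off-diagonal decay of $\Pi^\mu_k$ established in \cite{pao-IJM} (the analogue of Theorem 1 there) shows the integral concentrates near the fiber through $x$, and the squared modulus $|\Pi^\mu_k|^2$, restricted appropriately, acquires the exponential weight $e^{-k\,\mathcal{D}_N(n_0,n)}$ where $\mathcal{D}_N$ is Calabi's diastasis of $(N',I,\eta)$ and $n_0=\kappa(x)$. This is where the role of the quotient geometry enters: the phase governing the concentration is, up to the factor $\Phi$, exactly the diastasis on $N'$, because $\beta=\alpha/\Phi$ is the connection form for $\kappa$ and the induced Kähler form is $\eta$.

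Next, I would insert the assumed asymptotic expansion $\Pi^\mu_k(x,y)\sim (k/\pi)^d\sum_{j\ge0}k^{-j}\,\Pi^\mu_j(x,y)$ near the diagonal (with the diagonal values $\Pi^\mu_j(x,x)=(k/\pi)^{d}$-normalized to give $S_j^\mu$), and crucially replace the smooth near-diagonal coefficients by their sesquiholomorphic extensions $\widetilde{S_a^\mu}(n_0,n)$ — this is legitimate because $\omega$, hence $\eta$, is real-analytic, so all the coefficient functions are real-analytic and their sesquiholomorphic extensions to a neighborhood of the diagonal in $N'\times N'$ exist; the point (which goes back to the strategy in \cite{loi-1}) is that in a Laplace integral against $e^{-k\,\mathcal{D}}$ the integrand may be freely replaced by its holomorphic extension modulo rapidly decaying errors, since the critical manifold is the diagonal. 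Forming $|\Pi^\mu_k(x,y)|^2$ and collecting the coefficient of $k^{-(j+1-r)}$ in the product produces precisely $Z_{j+1-r}(n_0,n)$ after pulling out the normalizing power of $\Phi$ — this identifies the quantity defined in \eqref{eqn:Z-j-general} as the relevant amplitude. Applying \eqref{eqn:englis-asymptotic-expansion} with $\lambda=k$ and amplitude built from these $Z$'s, where the measure is the Kähler volume $\eta^{\wedge(d-1)}/(d-1)!$ on $N'$ (note $\dim_{\mathbb C} N'=d-1$, so the prefactor is $(\pi/k)^{d-1}$, which combines with the $k/\pi$ from the fiber integration to give the overall $(k/\pi)^d$), and invoking $R_0=\mathrm{id}$, yields an identity of asymptotic series whose leading term reproduces $S_0^\mu=\Phi^{-(d+1)}$.

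Finally, I would equate coefficients of $k^{-(j+1)}$ on both sides. The left-hand side contributes $S_{j+1}^\mu(n_0)$. The right-hand side, expanded through Engliš, contributes: from the $r=0$ term ($R_0=\mathrm{id}$) the convolution-type sum $\sum_{l} S^\mu_l(n_0) S^\mu_{j+1-l}(n_0)$ coming from $Z_{j+1}(n_0,n_0)=\Phi(n_0)^{d+1}\sum_{a+b=j+1}S_a^\mu(n_0)S_b^\mu(n_0)$ (since at $n=n_0$ the sesquiholomorphic extensions restrict to the functions themselves and $\mathcal{D}(n_0,n_0)=0$), which after isolating the $l=0$ and $l=j+1$ terms (each giving $S_0^\mu S_{j+1}^\mu=\Phi^{-(d+1)}S_{j+1}^\mu$) and moving them to the left contributes the factor that cancels against the normalization and leaves the displayed sum $\sum_{l=1}^{j} S^\mu_l S^\mu_{j+1-l}$ with the stated prefactor $-\Phi(n_0)^{d+1}$; and from the $r\ge1$ terms the contribution $\sum_{r=1}^{j+1} R_r^N\big(Z_{j+1-r}(n_0,\cdot)\big)\big|_{n=n_0}$, with the overall sign flipped because these terms sit on the right-hand side of the identity $S_{j+1}^\mu + (\text{stuff}) = -(\text{Engliš corrections})$. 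Rearranging gives exactly \eqref{eqn:S-j-general}. I expect the main obstacle to be the first step: carefully justifying the reduction of $\int_X|\Pi^\mu_k(x,y)|^2\,dV_X(y)$ to a Laplace integral on $N'$ with the correct amplitude, volume factor, and diastasis phase — in particular controlling the contributions away from the fiber through $x$ (using the off-diagonal decay from \cite{pao-IJM}), handling the interplay of the two commuting circle actions $\mu^X$ and $\nu^X$ under the change of fibration from $\pi$ to $\kappa$, and verifying that the real-analytic-extension replacement in the amplitude does not disturb the lower-order terms; once that bookkeeping is in place, the extraction of the recursion is essentially formal.
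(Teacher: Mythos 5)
Your strategy is the one the paper itself uses (idempotence of $\Pi^\mu_k$, localization via the off-diagonal decay from \cite{pao-IJM}, pushing the integral down to $N'$, a diastasis Laplace integral, Engli\v{s}'s expansion, and coefficient matching), but two steps, as written, are wrong or unjustified. The central gap is the replacement of the near-diagonal coefficients by the sesquiholomorphic extensions $\widetilde{S_a^\mu}$ of their diagonal restrictions. Your justification --- that in a Laplace integral against $e^{-k\,\mathcal{D}_N}$ ``the integrand may be freely replaced by its holomorphic extension modulo rapidly decaying errors, since the critical manifold is the diagonal'' --- is not valid at the level of lower-order terms: the operators $R_r^N$ are differential operators in $n$ evaluated at $n=n_0$, so they see the full transverse jet of the amplitude along the diagonal, and two amplitudes that agree only on the diagonal give different contributions for every $r\ge 1$. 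What is actually needed, and what the paper proves in \S \ref{sctn:sesqui-holomorphic-extensions}, is that the off-diagonal coefficients in the expansion of $\Pi^\mu_k\big(\sigma_u(n),\sigma_u(n')\big)$ \emph{are} the sesquiholomorphic extensions $\widetilde{S_j^\mu}(n,n')$. This rests on the fact that every $s\in H^\mu_k(X)$ extends holomorphically to $A^\vee_0$, so that $\Pi^\mu_k$ has a sesquiholomorphic extension $\mathcal{P}^\mu_k$ and one has the exact factorization (\ref{eqn:key-relation-Pi-P-1}) in terms of $\widetilde{K^\mu_k}$ and the K\"{a}hler potential $\varXi$; combining this with the Boutet de Monvel--Sj\"{o}strand representation and the Melin--Sj\"{o}strand stationary phase for complex phases identifies the coefficients of $\widetilde{K^\mu_k}$ as claimed in (\ref{eqn:asymptotic-expansion-K-k-mu}). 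You flag exactly this point as ``the main obstacle'', but the proposal does not close it, and it is the substantive analytic content behind the recursion.

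Second, your dimension bookkeeping is incorrect: $N'=X'/\mathbf{T}^1$ is the quotient of the $(2d+1)$-dimensional circle bundle by the locally free action $\mu^X$, not a symplectic reduction of $M$, so it has complex dimension $d$, the relevant volume is $dV_N=\eta^{\wedge d}/d!$, and the Engli\v{s} prefactor is $(\pi/k)^{d}$, not $(\pi/k)^{d-1}$. There is also no residual factor $k/\pi$ from the fiber integration: by equivariance the $\vartheta$-integral is trivial, and the comparison $dV_X=\Phi^{d+1}\,dW_X$ of (\ref{eqn:comparison-volume-forms}) is precisely what produces the weight $\Phi(n)^{d+1}$ appearing in $Z_j$. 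The overall $(k/\pi)^d$ arises as $(k/\pi)^{2d}$ from the product of the two kernel expansions times $(\pi/k)^d$ from the Laplace integral; with your count the matching of powers of $k$ is off by one and the recursion (\ref{eqn:S-j-general}) would not come out as stated. Finally, note that Engli\v{s}'s theorem must be applied with respect to the rescaled form $2\eta$, which is the normalization under which $R_1^N=\Delta_N-\varrho_N/2$ is used in the subsequent corollaries.
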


Since the $R_r^N$'s are universal intrinsic attributes of the K\"{a}hler
manifold $(N,K,\eta)$,
(\ref{eqn:S-j-general}) expresses the $S_j^\mu$'s as a universal intrinsic attribute
of the Hamiltonian action, through the geometry of its quotient. 
As mentioned, the $R_r^N$'s were computed in \S 4 of \cite{englis}, in \cite{loi-2} and \cite{hao-xu};
thus, in principle,
(\ref{eqn:S-j-general}) determines $S^\mu_l$ explicitly 
in terms of the geometry of the quotient $N'$. 
Let us consider $S^\mu_1$:

\begin{cor}
 \label{cor:lower-order-terms}
Under the assumptions of Theorem \ref{thm:lower-order-terms}, we have
\begin{eqnarray*}
S_1^\mu&=&\frac 12\,\varrho_N\,\Phi^{-(d+1)}\\
&&+(d+1)\,\Phi(n_0)^{-(d+2)}\,\left[
\dfrac{1}{2\,\Phi}\,\big\|\mathrm{grad}_N(\Phi)\big\|^2
-\Delta_N(\Phi)
\right].\nonumber
\end{eqnarray*}
\end{cor}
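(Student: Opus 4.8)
The plan is to specialize the recursion in Theorem~\ref{thm:lower-order-terms} to the case $j=0$ and simplify. Setting $j=0$ in \eqref{eqn:S-j-general}, the first sum on the right-hand side is empty, so $S_1^\mu(n_0) = -R_1^N\big(Z_0(n_0,\cdot)\big)|_{n=n_0}$, and by the excerpt we know $R_1^N = \Delta_N - \varrho_N/2$. The only quantity to unravel is $Z_0$: from \eqref{eqn:Z-j-general} with $j=0$ we have $Z_0(n_0,n) = \Phi(n)^{d+1}\,\widetilde{S_0^\mu}(n_0,n)\,\widetilde{S_0^\mu}(n,n_0)$, and since $S_0^\mu = \Phi^{-(d+1)}$, its sesquiholomorphic extension is $\widetilde{S_0^\mu} = \widetilde{\Phi^{-(d+1)}}$, i.e. the sesquiholomorphic extension of $\Phi^{-(d+1)}$. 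Thus $Z_0(n_0,n) = \Phi(n)^{d+1}\,\widetilde{\Phi^{-(d+1)}}(n_0,n)\,\widetilde{\Phi^{-(d+1)}}(n,n_0)$.

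Next I would compute $\Delta_N\big(Z_0(n_0,\cdot)\big)|_{n=n_0}$ and $Z_0(n_0,n_0)$. The evaluation $Z_0(n_0,n_0) = \Phi(n_0)^{d+1}\cdot \Phi(n_0)^{-(d+1)}\cdot \Phi(n_0)^{-(d+1)} = \Phi(n_0)^{-(d+1)}$ is immediate, using that the sesquiholomorphic extension restricts to the original function on the diagonal. For the Laplacian term, the key observation is that $\widetilde{\Phi^{-(d+1)}}(n_0,n)$, as a function of $n$ alone (with $n_0$ fixed), is \emph{anti-holomorphic} — this is the defining property of the sesquiholomorphic extension — so it is annihilated by the $\partial$-part of the Laplacian; dually $\widetilde{\Phi^{-(d+1)}}(n,n_0)$ is holomorphic in $n$ and is annihilated by the $\bar\partial$-part. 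Writing $\Delta_N$ in local holomorphic coordinates in terms of $g^{k\bar l}\partial_k\partial_{\bar l}$ and applying the Leibniz rule to the product $\Phi(n)^{d+1}\cdot(\text{anti-hol.})\cdot(\text{hol.})$, most cross-terms drop, and after evaluating at $n=n_0$ (where both extensions and their first derivatives match those of $\Phi^{-(d+1)}$ by Calabi's identities) one is left with an expression in $\Phi$, $\mathrm{grad}_N\Phi$ and $\Delta_N\Phi$ at $n_0$. I expect this to produce exactly the bracketed term $\frac{1}{2\Phi}\|\mathrm{grad}_N\Phi\|^2 - \Delta_N\Phi$ after carrying out the derivatives of $\Phi^{-(d+1)}$ via the chain rule, which naturally generates the factor $(d+1)\Phi^{-(d+2)}$ and a further $(d+1)(d+2)\Phi^{-(d+3)}\|\mathrm{grad}\Phi\|^2$ that combines with the cross-terms.

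The main obstacle, and the step requiring the most care, is the bookkeeping of the mixed derivative terms in $\Delta_N\big(Z_0(n_0,\cdot)\big)|_{n=n_0}$: one must track the contributions where $\partial_k$ hits $\widetilde{\Phi^{-(d+1)}}(n,n_0)$ and $\partial_{\bar l}$ hits $\widetilde{\Phi^{-(d+1)}}(n_0,n)$, versus the contribution where both derivatives hit the explicit factor $\Phi(n)^{d+1}$. Here Calabi's diastasis identities are what make the computation intrinsic: the first holomorphic and mixed derivatives of $\widetilde{\Phi^{-(d+1)}}(n,n_0)$ at $n=n_0$ agree with those of the genuine function $\Phi^{-(d+1)}$ (the discrepancy being governed by $\mathcal D$, which vanishes to second order on the diagonal), so the answer can be rewritten purely in terms of $\Phi$ and its covariant derivatives with respect to $(N',I,2\eta)$. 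Once the terms are collected, the $\varrho_N$-contribution comes directly from $-(-\varrho_N/2)\,Z_0(n_0,n_0) = \frac12\varrho_N\Phi^{-(d+1)}$, giving the first line of the Corollary, and the simplification of the Laplacian part gives the second line; a brief check of signs against the sign convention flagged after \eqref{eqn:englis-asymptotic-expansion} completes the argument.
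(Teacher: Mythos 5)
Your proposal is correct and follows essentially the same route as the paper: specialize (\ref{eqn:S-j-general}) to $j=0$, use $R_1^N=\Delta_N-\tfrac12\varrho_N$, and evaluate $\Delta_N$ of $\Phi(n)^{d+1}\,\widetilde{\Phi^{-(d+1)}}(n_0,n)\,\widetilde{\Phi^{-(d+1)}}(n,n_0)$ at $n=n_0$ by exploiting the (anti-)holomorphicity of the extension factors and the fact that their first derivatives agree with those of $\Phi^{-(d+1)}$ on the diagonal, which is exactly the mechanism the paper packages into Lemmas \ref{lem:technical} and \ref{lem:technical-1} together with the power rule (\ref{eqn:laplacian-powers-inductive}). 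Carrying out the Leibniz bookkeeping you describe does yield $\left.\Delta_N\big(Z_0(n_0,\cdot)\big)\right|_{n=n_0}=(d+1)\,\Phi^{-(d+2)}\bigl[\Delta_N\Phi-\tfrac{1}{2\Phi}\|\mathrm{grad}_N\Phi\|^2\bigr]$ and hence the stated formula (note that no $(d+1)(d+2)\,\Phi^{-(d+3)}\|\mathrm{grad}_N\Phi\|^2$ term actually arises, since the extension factors are never hit by both $\partial$ and $\overline\partial$).
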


Here $\varrho_N$, the gradient $\mathrm{grad}_N\Phi$ of $\Phi$ as a function on $N'$, 
and the Laplacian $\Delta_N(\Phi)$ are
taken with respect to the
K\"{a}hler structure $(N,I,2\eta)$, and $\|\cdot\|_N$ is the norm in the same metric.
Their relation to the corresponding objects on $M$ is explained in \S \ref{sctn:kahler-structure-N'}
and \S \ref{sct:laplacian-invariant-functions} (see (\ref{eqn:square-norm-gradient-MN}) and 
(\ref{eqn:laplacian-comparison-final-global})). If $\Phi=1$, we recover Lu's
subprincipal term \cite{lu}.

\begin{rem}
 A notational remark is in order. If, working in a system of local holomorphic
coordinates, $\gamma_{a\overline{b}}$ is a K\"{a}hler form, the
corresponding K\"{a}hler metric here is $\rho_{a\overline{b}}=-i\,\gamma_{a\overline{b}}$
(see the discussion in \S \ref{subsctn:kahler-stuff} and (\ref{eqn:espressione-locale-gamma})).
In the literature, often a factor $1/2$ (or $1/(2\pi)$) 
is included on the left hand side
of the previous relation; with this convention, 
the previous invariants would be associated to $(N,I,\eta)$ (\cite{tian-book},
\cite{lu}).
\end{rem}

Next let us dwell on the local asymptotics of the Toeplitz kernels
$T^\mu_k[f](\cdot,\cdot)$.
Firstly, by Theorem 1 of \cite{pao-IJM} we have
$\Pi^\mu_k(x',x'')=O\left(k^{-\infty}\right)$ uniformly for
$\mathrm{dist}_X\left(\mathbf{T}^1\cdot x',x''\right)\ge C\,k^{\epsilon-1/2}$, for any given
$\epsilon>0$. In view of (\ref{eqn:kernel-topelitz-equiv}), the same holds of $T^\mu_k[f]$.
We can then focus on the local asymptotics of $T^\mu_k[f](x',x'')$ for $x''\rightarrow
\mathbf{T}^1\cdot x'$.
In view of (\ref{eqn:isotypes}) and (\ref{eqn:kernel-szego-equiv}), 
for any $e^{i\vartheta}\in \mathbf{T}^1$ we have
\begin{equation}
 \label{eqn:equivariance-property}
T^\mu_k[f]\left(\mu^X_{e^{-i\vartheta}}(x'),x''\right)=
e^{ik\vartheta}\,T^\mu_k[f]\left(x',x''\right)=
T^\mu_k[f]\left(x',\mu^X_{e^{i\vartheta}}(x'')\right).
\end{equation}
Therefore, we need only consider the asymptotics of $T^\mu_k[f](x',x'')$ for $x''\rightarrow
x'$. Predictably, these exhibit the same kind of scaling behavior as the asymptotics of
$\Pi^\mu_k(x',x'')$ for $x'\rightarrow x''$ (Theorem 2 of \cite{pao-IJM}).

This is best expressed in terms of Heisenberg local coordinates 
(in the following: HLC for short)
$x+(\theta,\mathbf{v})$ centered at
a given $x\in X$; here $(\theta,\mathbf{v})\in (-\pi,\pi)\times B_{2d}(\mathbf{0},\delta)$,
where $B_{2d}(\mathbf{0},\delta)\subseteq \mathbb{C}^d$ is 
the open ball centered at the origin and of radius $\delta>0$.
It is in these coordinates that the near-diagonal scaling asymptotics 
of the standard equivariant Szeg\"{o}
kernels $\Pi_k$ exhibit their universal nature \cite{bsz}, \cite{sz}, 
and by \cite{pao-IJM} the same holds of the
$\Pi^\mu_k$'s. While we refer to \cite{sz} for a precise definition, let us recall that
Heisenberg local coordinates enjoy the following properties.

Firstly, the parametrized submanifold $\gamma_x:\mathbf{v}\mapsto x+(0,\mathbf{v})$ is horizontal,
that is, tangent to $\ker(\alpha)\subseteq TX$, at $\mathbf{v}=\mathbf{0}$. 
In view of (\ref{eqn:infinitesimal-lift}), and given that $\Phi>0$, 
$\gamma_x$ is transverse to the $\mu^X$-orbit $\mathbf{T}^1\cdot x$;
hence for $\mathbf{v}\sim \mathbf{0}$ we have
\begin{equation}
 \label{eqn:bound-orbit-distance-HLC-first}
D_2\,\|\mathbf{v}\|\ge
\mathrm{dist}_X\left(\mathbf{T}^1\cdot x,x+\mathbf{v}\right)\ge D_1\,\|\mathbf{v}\|,
\end{equation}
for some fixed $D_1,\,D_2>0$.

Since HLC centered at $x\in X$ come with a built-in unitary isomorphism
$T_mM\cong \mathbb{C}^d$, where $m=\pi(x)\in X$, we may use the expression $x+(\theta,\mathbf{v})$
when $\mathbf{v}\in T_mM$ has sufficiently small norm.

Finally, scalar multiplication by $e^{i\vartheta}\in \mathbf{T}^1$ is expressed
in HLC by a translation in the angular coordinate: where both terms are defined, we have
\begin{equation}
 \label{eqn:HLC-scalar-multiplication}
e^{i\vartheta}\cdot \big(x+(\theta,\mathbf{v})\big)=x+(\vartheta+\theta,\mathbf{v}).
\end{equation}
We shall set $x+\mathbf{v}=:x+(0,\mathbf{v})$. 

Given (\ref{eqn:equivariance-property})
and the previous transversality argument, we need only consider the asymptotics
of $T^\mu_k[f](x+\mathbf{v},x+\mathbf{w})$ for $\mathbf{v},\,\mathbf{w}\rightarrow 0$.
Following \cite{sz}, let us define, for $\mathbf{v},\mathbf{w}\in T_mM$,
\begin{equation}
 \label{eqn:definition-psi-2}
\psi_2(\mathbf{v},\mathbf{w})=:-i\,\omega_m(\mathbf{v},\mathbf{w})-\frac 12\,
\|\mathbf{v}-\mathbf{w}\|_m^2,
\end{equation}
where $\|\cdot\|_m$ is the Euclidean norm on the unitary vector space
$(T_mM,\omega_m,J_m)$.

\begin{thm}
 \label{thm:toeplitz-scaling}
Assume as above that $\Phi>0$. Then for any $f\in \mathcal{C}^\infty(M)^\mu$ we have
\begin{enumerate}
 \item $T^\mu_k[f]=0$ for any $k\le 0$.
\item For any $C,\epsilon>0$, we have
$T^\mu_k[f]\big(x',x''\big)=O\left(k^{-\infty}\right)$ as $k\rightarrow +\infty$,
uniformly for $\mathrm{dist}_X\left(\mathbf{T}^1\cdot x',x''\right)\ge C\,k^{\epsilon-1/2}$.
\item Suppose $x\in X$ and fix a system of HLC on $X$ centered at $x$. Set
$m=:\pi(x)$. Then uniformly for $\mathbf{v},\,\mathbf{w}\in T_mM$ with 
$\|\mathbf{v}\|, \,\|\mathbf{w}\|\le C\,k^{1/9}$, as $k\rightarrow +\infty$
we have an asymptotic expansion of the form
\begin{eqnarray*}
 \lefteqn{
T^\mu_k[f]\left(x+\dfrac{\mathbf{v}}{\sqrt{k}},x+\dfrac{\mathbf{w}}{\sqrt{k}}\right)
}\\
&=&\left(\frac k\pi\right)^d 
\sum _{t\in T_m}t^k\,e^{\psi_2\left(d_m\mu^M_{t^{-1}}(\mathbf{v}),\mathbf{w}\right)
/\Phi(m)}\cdot A_t(m,\mathbf{v},\mathbf{w}),
\end{eqnarray*}
with
\begin{eqnarray*}
A_t(m,\mathbf{v},\mathbf{w},f)&\sim& 
\sum _{j\ge 0}k^{-j/2}\,R_j\left(m,d_m\mu^M_{t^{-1}}(\mathbf{v}),\mathbf{w},f\right),
\end{eqnarray*}
where the $R_j(\cdot,\cdot,\cdot,\cdot)$'s are polynomial in $\mathbf{v}$ and $\mathbf{w}$
and differential operators in $f$.
In particular,
$$
R_0\left(m,d_m\mu^M_{t^{-1}}(\mathbf{v}),\mathbf{w},f\right)=\Phi(m)^{-(d+1)}\,f(m).
$$

\item The previous asymptotic expansion goes down by integer steps
when $\mathbf{v}=\mathbf{w}=0$ (that is, only powers of $k^{-1}$ appear in the diagonal
asymptotics).
\end{enumerate}

 \end{thm}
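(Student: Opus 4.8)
The plan is to derive Theorem \ref{thm:toeplitz-scaling} by superimposing the near-diagonal scaling asymptotics for $\Pi^\mu_k$ from \cite{pao-IJM} through the integral representation (\ref{eqn:kernel-topelitz-equiv}). First I would record parts (1) and (2) as essentially immediate: part (1) follows from $T^\mu_k[f] = \Pi^\mu_k \circ M_f \circ \Pi^\mu_k$ together with the fact (stated in the Introduction, from \cite{pao-IJM}) that $H^\mu_k(X)$ is the null space when $k \le 0$ under the hypothesis $\Phi > 0$; part (2) follows from the off-diagonal rapid decay of $\Pi^\mu_k$ (Theorem 1 of \cite{pao-IJM}) plugged into the first line of (\ref{eqn:kernel-topelitz-equiv}), since the composition of two kernels each of which is $O(k^{-\infty})$ away from $\mathrm{dist}_X(\mathbf{T}^1\cdot x', \cdot) \ge C k^{\epsilon - 1/2}$ inherits the same decay (splitting the $y$-integral according to whether $y$ is near $\mathbf{T}^1\cdot x'$ or near $\mathbf{T}^1\cdot x''$).

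For part (3), the core is a stationary-phase / Laplace analysis of
\[
T^\mu_k[f]\!\left(x+\tfrac{\mathbf{v}}{\sqrt k}, x+\tfrac{\mathbf{w}}{\sqrt k}\right)
= \int_X \Pi^\mu_k\!\left(x+\tfrac{\mathbf{v}}{\sqrt k}, y\right) f(\pi(y))\, \Pi^\mu_k\!\left(y, x+\tfrac{\mathbf{w}}{\sqrt k}\right) dV_X(y).
\]
By part (2), the integral localizes to $y$ in a shrinking tube around $\mathbf{T}^1 \cdot x$, where we can use HLC and the transversality bound (\ref{eqn:bound-orbit-distance-HLC-first}) to parametrize $y = \mu^X_{e^{i\phi}}(x + \mathbf{u}/\sqrt k)$ with $\phi \in \mathbf{T}^1$ and $\mathbf{u}$ ranging over a ball of radius $O(k^\epsilon)$ in $T_mM$. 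Using the equivariance (\ref{eqn:equivariance-property}) to absorb the $\mathbf{T}^1$-action in the first factor, the $\phi$-integral produces the sum over $t \in T_m$ (only the stabilizer contributes to leading orders, exactly as in the $\Pi^\mu_k$ analysis), while the $\mathbf{u}$-integral becomes a Gaussian integral whose exponent, to leading order, is $\psi_2(d_m\mu^M_{t^{-1}}(\mathbf{v}), \mathbf{u})/\Phi(m) + \psi_2(\mathbf{u}, \mathbf{w})/\Phi(m)$ — one copy from each Szeg\H{o} factor, with the $1/\Phi(m)$ rescaling coming from the dependence of the phase on $\Phi$ as in (\ref{eqn:infinitesimal-lift}). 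Completing the square in $\mathbf{u}$ collapses the two copies of $\psi_2$ into the single $\psi_2(d_m\mu^M_{t^{-1}}(\mathbf{v}),\mathbf{w})/\Phi(m)$ displayed in the statement, and the Gaussian normalization together with the $(k/\pi)^d$ prefactors from each $\Pi^\mu_k$ and the Jacobian of the $\mathbf{u}$-substitution yields the overall $(k/\pi)^d$; the value $R_0 = \Phi(m)^{-(d+1)}\, f(m)$ comes from evaluating $f$ at the stationary point $m$ and matching constants against the known $S_0^\mu = \Phi(m)^{-(d+1)}$ for the Szeg\H{o} case. The half-integer expansion for $A_t$ is the standard output of this stationary phase, the polynomiality in $\mathbf{v}, \mathbf{w}$ and the differential-operator-in-$f$ structure following from Taylor-expanding $f$ and the amplitudes around $m$.

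For part (4), the improvement to integer steps on the diagonal $\mathbf{v} = \mathbf{w} = 0$: here one argues that the odd-order terms $R_{2j+1}$ vanish. The cleanest route is a parity/symmetry argument — the phase function and the measure in the localized integral, after the substitution centered at the critical manifold, are even under $\mathbf{u} \mapsto -\mathbf{u}$ up to terms that do not affect the leading symmetry, so the odd moments of the Gaussian vanish; this is the same mechanism that forces the Szeg\H{o} diagonal expansion (\ref{eqn:asymptotic-expansion-szego-special-case}) to go down by integer steps, and one can either adapt that argument or invoke it directly by noting that on the diagonal $T^\mu_k[f](x,x)$ is, up to the convolution with $f$, built from $\Pi^\mu_k$ kernels whose diagonal behavior already has this parity. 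I expect the main obstacle to be bookkeeping the $1/\Phi$ rescaling of the scaling variables consistently through the composition: the two Szeg\H{o} factors are each governed by HLC-scaling asymptotics involving $\Phi(m)$ in a precise way (inherited from (\ref{eqn:infinitesimal-lift})), and one must be careful that completing the square in $\mathbf{u}$ reproduces exactly $\psi_2(d_m\mu^M_{t^{-1}}(\mathbf{v}),\mathbf{w})/\Phi(m)$ with the correct single power of $\Phi(m)$, rather than an artifact with the wrong power — getting the constant $\Phi(m)^{-(d+1)}$ in $R_0$ right is the sanity check that the rescaling has been handled correctly. A secondary technical point is justifying that the enlarged range $\|\mathbf{v}\|, \|\mathbf{w}\| \le C k^{1/9}$ (rather than $k^\epsilon$) is admissible, which requires tracking remainder estimates through the stationary phase uniformly in these growing parameters, exactly as in the corresponding statement for $\Pi^\mu_k$ in \cite{pao-IJM}.
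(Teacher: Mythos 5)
Your overall strategy is the paper's: parts (1) and (2) are read off from the corresponding properties of $\Pi^\mu_k$ in Theorem 1 of \cite{pao-IJM} inserted into (\ref{eqn:kernel-topelitz-equiv}); part (3) is obtained by localizing the composition integral to a $\mu^X$-invariant tube around $\mathbf{T}^1\cdot x$ (rapid decay from part (2) with $\epsilon=1/9$), passing to $\mu$-adapted coordinates $y=e^{i\theta}\bullet(x+\mathbf{u})$, rescaling $\mathbf{u}\mapsto\mathbf{u}/\sqrt{k}$, inserting the near-diagonal scaling asymptotics of the two Szeg\"{o} factors and doing the resulting Gaussian integral after the change of variables $\mathbf{u}=\sqrt{\Phi(m)}\,\mathbf{s}$; part (4) is the same parity argument (half-integer powers pair with odd Taylor terms in $\mathbf{u}$, whose Gaussian moments vanish).

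The one point where your sketch misdescribes the mechanism, and would not reproduce the stated formula at a point with $T_m\neq\{1\}$, is the origin of the sum over $T_m$. In the composition route the angular integral is not a stationary phase selecting the stabilizer: by (\ref{eqn:equivariance-property}) the factors $e^{-ik\theta}$ and $e^{ik\theta}$ from the two kernels cancel, so the integrand is $\theta$-independent and the $d\theta$-integral is trivial (your \lq\lq the $\phi$-integral produces the sum over $t\in T_m$\rq\rq\ belongs to the direct microlocal derivation, which the paper explicitly avoids). Instead, each inserted copy of the $\Pi^\mu_k$ expansion carries its own sum over $T_m$, so the Gaussian exponent is $\psi_2\left(d_m\mu^M_{t^{-1}}(\mathbf{v}'),\mathbf{u}'\right)+\psi_2\left(\mathbf{u}',d_m\mu^M_{s}(\mathbf{w}')\right)$ with two group elements, not the single-$t$ exponent you wrote; moreover the parametrization $(\theta,\mathbf{u})\mapsto e^{i\theta}\bullet(x+\mathbf{u})$ is an $|T_m|$-to-one covering, so the measure carries a factor $\Phi(m)/|T_m|$ (Lemma \ref{lem:cover-adapted-HLC} and Corollary \ref{cor:computation-integral}; the $\Phi(m)$ in the density is also part of why the constant lands on $\Phi(m)^{-(d+1)}$). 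After the Gaussian integration the double sum collapses via $(s,t)\mapsto st$, using unitarity of $d_m\mu^M$, into $|T_m|$ copies of a single sum, which cancels the $1/|T_m|$ and yields exactly the displayed expansion. This is a repairable bookkeeping omission within the same method rather than a wrong approach, but it is genuinely needed since the theorem is asserted at arbitrary $x\in X$.
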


Theorem \ref{thm:toeplitz-scaling} might be proven by a microlocal argument along the lines of
the one used for Theorem 1 of \cite{pao-IJM}; to avoid introducing too much machinery, we shall
instead deduce it as a consequence of Theorem 1 of \cite{pao-IJM}, by inserting in (\ref{eqn:kernel-topelitz-equiv})
the near-diagonal
scaling asymptotics for $\Pi^\mu_k$.

\begin{cor}
\label{cor:toeplitz-scaling}
In the situation of Theorem \ref{thm:toeplitz-scaling},
suppose in addition that $T_{\mathrm{gen}}$ is trivial. If $x\in X'$, 
then as $k\rightarrow+\infty$
there is an asymptotic expansion
\begin{eqnarray*}
 T^\mu_k[f]\left(x,x\right)&\sim&
\left(\frac k\pi\right)^d 
\sum _{j\ge 0}k^{-j}\,S_j^\mu[f]\left(m\right),
\end{eqnarray*}
where $m=\pi(x)$ and every $S_j^\mu[f]\in \mathcal{C}^\infty(M')^{\mu}$.
In particular, we have
$$
S_0^\mu[f]=\Phi^{-(d+1)}\cdot f.
$$
\end{cor}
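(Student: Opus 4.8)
The plan is to deduce Corollary \ref{cor:toeplitz-scaling} directly from the diagonal case ($\mathbf{v}=\mathbf{w}=0$) of Theorem \ref{thm:toeplitz-scaling}, exactly as Theorem \ref{thm:toeplitz-scaling} itself is deduced from Theorem 1 of \cite{pao-IJM}. The hypothesis $T_{\mathrm{gen}}$ trivial, together with $x\in X'$, ensures that the stabilizer $T_m$ of $m=\pi(x)$ is trivial, so in the expansion of Theorem \ref{thm:toeplitz-scaling}(3) the sum over $t\in T_m$ collapses to the single term $t=1$. Setting $\mathbf{v}=\mathbf{w}=0$ then kills the exponential factor $e^{\psi_2(\cdot,\cdot)/\Phi(m)}$ (since $\psi_2(\mathbf{0},\mathbf{0})=0$), and by part (4) the residual asymptotic series $A_1(m,\mathbf{0},\mathbf{0},f)$ proceeds in integer powers of $k^{-1}$. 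Thus
\begin{equation*}
T^\mu_k[f](x,x)\sim\left(\frac k\pi\right)^d\sum_{j\ge 0}k^{-j}\,R_{2j}(m,\mathbf{0},\mathbf{0},f),
\end{equation*}
and we simply \emph{define} $S_j^\mu[f](m)=:R_{2j}(m,\mathbf{0},\mathbf{0},f)$. The leading coefficient is read off from the stated value $R_0(m,\mathbf{0},\mathbf{0},f)=\Phi(m)^{-(d+1)}\,f(m)$, giving $S_0^\mu[f]=\Phi^{-(d+1)}\cdot f$.

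Two points require a word of justification. First, one must check that $S_j^\mu[f]$ is well defined as a function on $M'$, i.e.\ that it is independent of the chosen HLC centered at $x$ and that it descends from $X'$ to $M'$. Independence of the HLC follows because the left-hand side $T^\mu_k[f](x,x)$ is intrinsically attached to $x$ — the coordinate choice only affects the off-diagonal structure — so the diagonal coefficients are canonical; the fact that $x\mapsto T^\mu_k[f](x,x)$ descends to $M$ was already noted in the excerpt, right after (\ref{eqn:kernel-topelitz-equiv}), using the $\mu^X$-equivariance (\ref{eqn:equivariance-property}) evaluated on the diagonal. Hence each coefficient in the asymptotic expansion descends as well, and smoothness on $M'$ is inherited from the smoothness of the full expansion of Theorem \ref{thm:toeplitz-scaling}(3) (the $R_j$'s are built from the metric, curvature, and derivatives of $f$, all smooth on $M'$).

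Second, one should confirm that $f$ being $\mu^M$-invariant is exactly the hypothesis under which Theorem \ref{thm:toeplitz-scaling} applies, which it is — this is already assumed there — and that the invariance $S_j^\mu[f]\in\mathcal{C}^\infty(M')^\mu$ is automatic once we know the coefficients descend to $M'$ and are obtained from a $\mu^M$-equivariant construction; equivalently, it follows from the compatibility of the expansion with (\ref{eqn:equivariance-property}). I expect no genuine obstacle here: the corollary is essentially the specialization of Theorem \ref{thm:toeplitz-scaling} to the diagonal under the extra assumption that trivializes $T_m$, and the only mild care needed is the bookkeeping that turns the half-integer series of part (3) into the integer series of part (4) and the identification of the leading term. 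The proof is therefore short once Theorem \ref{thm:toeplitz-scaling} is in hand.
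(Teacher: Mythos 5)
Your proof is correct and follows essentially the same route as the paper: Corollary \ref{cor:toeplitz-scaling} is the specialization of Theorem \ref{thm:toeplitz-scaling}(3)--(4) to the diagonal, where $x\in X'$ and the triviality of $T_{\mathrm{gen}}$ make $T_m=\{1\}$, the exponential factor drops out since $\psi_2(\mathbf{0},\mathbf{0})=0$, and the coefficients descend to $\mathcal{C}^\infty(M')^{\mu}$ via the equivariance (\ref{eqn:equivariance-property}). (The paper also reproves the corollary later, under the extra real-analyticity hypothesis, as a by-product of the proof of Theorem \ref{thm:toeplitz-lower-order-terms}, but your argument coincides with the intended primary derivation.)
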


When $\Phi=1$, corresponding results were obtained in Lemma 4.6 of \cite{mm00} and
Lemma 7.2.4 of \cite{mm0}, covering the case of symplectic manifolds in the presence of a twisting vector
bundle.

Let us consider the lower order $f_j^\mu$'s.

\begin{thm}
 \label{thm:toeplitz-lower-order-terms}
Under the assumptions of Corollary \ref{cor:toeplitz-scaling},
assume also that $\omega$ is real-analytic. Then
for every $j=0,1,2,\ldots$ 
we have $S_j^\mu[f]=P_j^\mu(f)$, where each $P^\mu_j$ is a differential operator
of degree $\le 2j$. More precisely, viewed as a $\nu^N$-invariant
function on $N$, $f_j$ is given by
$$
S_j^\mu[f](n_0)=P^\mu_j(f)(n_0)
=\sum_{r+s=j}\left.R_r^N\big(f(\cdot )\,Z_s(n_0,\cdot)\big)\right|_{n=n_0}.
$$
\end{thm}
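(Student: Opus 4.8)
The plan is to combine the diagonal asymptotic expansion of Corollary~\ref{cor:toeplitz-scaling} with the structure of the Szeg\"o expansion already analyzed in Theorem~\ref{thm:lower-order-terms}, exploiting the fact that passing to the quotient circle bundle $\kappa:X'\to N'$ turns the equivariant Toeplitz kernel into an object to which Engli\v{s}'s Laplace-integral asymptotics \eqref{eqn:englis-asymptotic-expansion} apply. First I would use the reproducing identity \eqref{eqn:kernel-topelitz-equiv}, restricted to the diagonal $x=x'$, to write $T^\mu_k[f](x,x)=\int_X \Pi^\mu_k(x,y)\,f(y)\,\Pi^\mu_k(y,x)\,dV_X(y)$. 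The integrand is concentrated, up to $O(k^{-\infty})$, near the orbit $\mathbf{T}^1\cdot x$ by part~(2) of Theorem~\ref{thm:toeplitz-scaling}; using the product action $\nu^X\times\mu^X$ one can then descend the integral to the quotient $N'$, where (as in the derivation of Theorem~\ref{thm:lower-order-terms}) the two Szeg\"o factors $\Pi^\mu_k(x,y)$ and $\Pi^\mu_k(y,x)$ combine, via their off-diagonal real-analytic expansions, into an exponential $e^{-k\,\mathcal{D}_N(n_0,n)/\Phi}$ times a polyhomogeneous amplitude built from the sesquiholomorphic extensions $\widetilde{S_a^\mu}$. Concretely, the localized integral takes the shape of a Laplace integral $\int_{N'} e^{-(k/\Phi)\,\mathcal{D}_N(n_0,n)}\,f(n)\,\big(\text{amplitude}\big)\,\det[\rho]\,dn$ with the amplitude being $\sum_{a,b}\widetilde{S_a^\mu}(n_0,n)\,\widetilde{S_b^\mu}(n,n_0)\,k^{-(a+b)}\cdot(k/\pi)^{2d}$; the factor $\Phi^{d+1}$ in \eqref{eqn:Z-j-general} accounts for the mismatch between the K\"ahler volume on $N'$ and the measure induced from $dV_X$.

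Next I would apply Engli\v{s}'s Theorem~3, i.e.\ the expansion \eqref{eqn:englis-asymptotic-expansion} with $\lambda=k/\Phi(n_0)$, to the above Laplace integral, treating $f(n)\cdot Z_s(n_0,n)/\Phi(n_0)^{d+1}$ (summed over $s$) as the smooth integrand against which the covariant operators $R_r^N$ act in the variable $n$, with $n_0$ frozen. Collecting the powers of $k$: the prefactor $(\pi/\lambda)^d=(\pi\Phi(n_0)/k)^d$ from \eqref{eqn:englis-asymptotic-expansion}, together with the $(k/\pi)^{2d}$ already present and the $\Phi(n_0)^{d+1}$ normalizations, produces exactly the $(k/\pi)^d$ prefactor of Corollary~\ref{cor:toeplitz-scaling}, while matching the $k^{-j}$ coefficients gives $S_j^\mu[f](n_0)=\sum_{r+s=j} R_r^N\big(f(\cdot)\,Z_s(n_0,\cdot)\big)\big|_{n=n_0}$. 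Since $R_0^N=\mathrm{id}$ and $Z_0(n_0,n)=\Phi(n)^{d+1}\,\widetilde{S_0^\mu}(n_0,n)^2=\Phi^{d+1}\cdot\Phi^{-2(d+1)}=\Phi^{-(d+1)}$ on the diagonal, the $j=0$ term reproduces $S_0^\mu[f]=\Phi^{-(d+1)}f$, consistent with Corollary~\ref{cor:toeplitz-scaling}, which also serves as a sanity check. That each $P_j^\mu$ has order $\le 2j$ follows from the fact that $R_r^N$ has order $\le 2r$ (Engli\v{s}) and $Z_s$ depends on finitely many derivatives — more precisely one should note that $R_r^N$ contributes order $\le 2r$ while the $n$-dependence of $Z_s$ through $\widetilde{S_b^\mu}$ does not raise the total order beyond $2j$ once the diagonal restriction is taken; a careful bookkeeping of how differentiating the sesquiholomorphic extensions interacts with the order count is needed here.

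The step I expect to be the main obstacle is the rigorous justification of descending the oscillatory/rapidly-decaying integral $\int_X \Pi^\mu_k(x,y)\,f(y)\,\Pi^\mu_k(y,x)\,dV_X(y)$ to a genuine Laplace integral on $N'$ of exactly the form \eqref{eqn:laplace-integral}, i.e.\ controlling the passage from the Heisenberg-coordinate scaling asymptotics of $\Pi^\mu_k$ (Theorem~\ref{thm:toeplitz-scaling}, part~(3), and Theorem~1 of \cite{pao-IJM}) to the real-analytic diastasis exponent $e^{-k\,\mathcal{D}_N/\Phi}$ on the quotient. This requires showing that the phase appearing after integrating out the angular ($\mathbf{T}^1$) direction — which at leading order is $\psi_2$ evaluated along the horizontal slice — exponentiates, upon summation of the full asymptotic series, to (minus) Calabi's diastasis of the induced K\"ahler metric $(N',I,\rho_N)$ rescaled by $\Phi^{-1}$, together with a uniform remainder estimate on the $k^{1/9}$ (or suitably chosen) neighborhood that lets one truncate and apply stationary phase / Watson's lemma cleanly. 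This is precisely the technical heart that was already carried out for the Szeg\"o kernel in the proof of Theorem~\ref{thm:lower-order-terms}; here one only needs the extra observation that inserting the $\mu^M$-invariant factor $f(y)=f(n)$ does not disturb the phase and merely multiplies the amplitude, so the Toeplitz case reduces to the Szeg\"o case with $\widetilde{S_a^\mu}\widetilde{S_b^\mu}$ replaced by $f\cdot\widetilde{S_a^\mu}\widetilde{S_b^\mu}$. Once this reduction is in place, the identity for $S_j^\mu[f]$ drops out of \eqref{eqn:englis-asymptotic-expansion} by matching coefficients, exactly as in \eqref{eqn:S-j-general} but without the extra quadratic self-interaction term (which in the Szeg\"o case arose from the normalization forcing $\Pi^\mu_k(x,x)$ to be the reproducing kernel, a constraint absent for a general $f$).
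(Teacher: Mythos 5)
Your route is the paper's route: write $T^\mu_k[f](x_0,x_0)=\int_X\Pi^\mu_k(x_0,y)\,f(y)\,\Pi^\mu_k(y,x_0)\,dV_X(y)$, localize near the orbit by rapid off-diagonal decay, descend to $N'$ using the equivariance of $\Pi^\mu_k$ and $dV_X=\Phi^{d+1}\,dW_X$ (see \eqref{eqn:comparison-volume-forms}), recognize a Laplace-type integral whose amplitude is the Szeg\"o amplitude multiplied by the invariant factor $f$, and apply Engli\v{s}; your closing observation that the Toeplitz case is the Szeg\"o case with $\widetilde{S_a^\mu}\,\widetilde{S_b^\mu}$ replaced by $f\cdot\widetilde{S_a^\mu}\,\widetilde{S_b^\mu}$ (and no recursion, since the unknowns $S_j^\mu[f]$ no longer appear in the amplitude) is exactly the point.

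There is, however, a genuine error in your $\Phi$-bookkeeping, located precisely at the step you single out as the technical heart. The exponent produced by the product of the two Szeg\"o factors is exactly $e^{-k\,\mathcal{D}_N(n_0,n)}$, where $\mathcal{D}_N$ is Calabi's diastasis of the quotient structure $(N',I,2\eta)$, and Engli\v{s}'s expansion \eqref{eqn:englis-asymptotic-expansion} is applied with $\lambda=k$; this comes from the exact identity \eqref{eqn:key-relation-Pi-P-1} (valid because $\varXi$ is a K\"ahler potential for $2h$ and $\widetilde{K^\mu_k}$ is a sesquiholomorphic extension) together with the off-diagonal expansion \eqref{eqn:asymptotic-expansion-K-k-mu} already established in \S \ref{sctn:sesqui-holomorphic-extensions}. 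The factor $1/\Phi$ is already encoded in $\eta$ (horizontally $\eta$ corresponds to $\omega/\Phi$), so writing the weight as $e^{-k\,\mathcal{D}_N/\Phi}$ and taking $\lambda=k/\Phi(n_0)$ double-counts it: with your normalization the expansion would read $\big(\pi\,\Phi(n_0)/k\big)^d\sum_r\big(\Phi(n_0)/k\big)^r\,R_r(\cdots)$, producing spurious powers $\Phi(n_0)^{d+r}$ that the asserted formula does not contain and that the "$\Phi(n_0)^{d+1}$ normalizations" you invoke cannot absorb; moreover, Engli\v{s}'s Theorem 3 requires the weight to be the diastasis of the very metric whose volume density and operators $R_r^N$ appear, so a pointwise rescaling of the exponent by $\Phi^{-1}$ is not a legitimate input. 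For the same reason, the resummation problem you anticipate (recovering the diastasis from the Heisenberg-coordinate phase $\psi_2/\Phi$) does not arise: the paper never resums scaling asymptotics, it quotes \eqref{eqn:key-relation-Pi-P-1} and \eqref{eqn:asymptotic-expansion-K-k-mu}, after which inserting the invariant factor $f$ changes only the amplitude and the coefficient identity $S_j^\mu[f](n_0)=\sum_{r+s=j}R_r^N\big(f(\cdot)\,Z_s(n_0,\cdot)\big)\big|_{n=n_0}$ follows by matching powers of $k$. (A minor point: $Z_0(n_0,n)=\Phi(n)^{d+1}\widetilde{S_0^\mu}(n_0,n)^2$ holds only on the diagonal, since off the diagonal $\widetilde{S_0^\mu}(n_0,n)\neq\widetilde{S_0^\mu}(n,n_0)$; as you use it only for the diagonal sanity check, this is harmless.)
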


\begin{rem}
 Clearly, $S_j^\mu=S_j^\mu[1]$ for every $j\ge 0$.
\end{rem}

\begin{cor}
\label{cor:toeplitz-lower-order-terms} 
In the situation of Theorem \ref{thm:toeplitz-lower-order-terms},
$$
S_1^\mu[f]=\Phi^{-(\mathrm{d}+1)}\,\Delta_N(f)+S^\mu_1\cdot f.
$$
\end{cor}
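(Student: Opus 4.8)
The plan is to specialize Theorem \ref{thm:toeplitz-lower-order-terms} to the case $j=1$ and then unwind the formula for $R_1^N$ recalled in the introduction. First I would write out
$$
S_1^\mu[f](n_0)=\sum_{r+s=1}\left.R_r^N\big(f(\cdot)\,Z_s(n_0,\cdot)\big)\right|_{n=n_0}
=\left.R_0^N\big(f(\cdot)\,Z_1(n_0,\cdot)\big)\right|_{n=n_0}
+\left.R_1^N\big(f(\cdot)\,Z_0(n_0,\cdot)\big)\right|_{n=n_0},
$$
using $R_0^N=\mathrm{id}$ and $R_1^N=\Delta_N-\varrho_N/2$. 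The first summand evaluates immediately to $f(n_0)\,Z_1(n_0,n_0)$. By the definition \eqref{eqn:Z-j-general} of $Z_j$ together with $S_0^\mu=\Phi^{-(d+1)}$, one checks that $Z_0(n_0,n)=\Phi(n)^{d+1}\,\widetilde{S_0^\mu}(n_0,n)\,\widetilde{S_0^\mu}(n,n_0)$ restricts on the diagonal to $Z_0(n_0,n_0)=\Phi(n_0)^{d+1}\cdot\Phi(n_0)^{-2(d+1)}=\Phi(n_0)^{-(d+1)}$, and that $Z_1(n_0,n_0)=\Phi(n_0)^{d+1}\cdot 2\,\Phi(n_0)^{-(d+1)}S_1^\mu(n_0)=2\,S_1^\mu(n_0)$, where I am using that the sesquiholomorphic extension of a real-analytic function restricts to the function itself on the diagonal. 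Hence the first summand contributes $2\,S_1^\mu(n_0)\,f(n_0)$.

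Next I would expand the second summand. Since $R_1^N$ acts in the variable $n$ with $n_0$ a parameter,
$$
\left.R_1^N\big(f(\cdot)\,Z_0(n_0,\cdot)\big)\right|_{n=n_0}
=\Delta_N\big(f(\cdot)\,Z_0(n_0,\cdot)\big)\big|_{n=n_0}
-\tfrac12\,\varrho_N(n_0)\,f(n_0)\,Z_0(n_0,n_0).
$$
The term involving $\varrho_N$ contributes $-\tfrac12\,\varrho_N(n_0)\,\Phi(n_0)^{-(d+1)}\,f(n_0)$. For the Laplacian term I would use the product rule
$$
\Delta_N(uv)=v\,\Delta_N(u)+u\,\Delta_N(v)+2\,\langle\mathrm{grad}_N u,\mathrm{grad}_N v\rangle_N,
$$
with $u=f$ and $v(\cdot)=Z_0(n_0,\cdot)$, evaluated at $n=n_0$. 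This produces $Z_0(n_0,n_0)\,\Delta_N(f)(n_0)=\Phi(n_0)^{-(d+1)}\Delta_N(f)(n_0)$ plus $f(n_0)\,\Delta_N\big(Z_0(n_0,\cdot)\big)\big|_{n=n_0}$ plus a gradient cross-term.

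The crux of the argument — and the step I expect to be the main obstacle — is to recognize that the remaining terms that do \emph{not} carry a derivative of $f$, namely
$$
f(n_0)\left[\,2\,S_1^\mu(n_0)-\tfrac12\,\varrho_N(n_0)\,\Phi(n_0)^{-(d+1)}
+\Delta_N\big(Z_0(n_0,\cdot)\big)\big|_{n=n_0}
+2\,\big\langle\mathrm{grad}_N f,\mathrm{grad}_N Z_0(n_0,\cdot)\big\rangle_N\big|_{n=n_0}\right],
$$
collapse exactly to $S_1^\mu(n_0)\,f(n_0)$. The clean way to see this is to invoke Theorem \ref{thm:toeplitz-lower-order-terms} once more with $f\equiv 1$: by the remark immediately following it, $S_1^\mu=S_1^\mu[1]$, so substituting $f=1$ into the identity above (where the gradient of $f$ and the derivatives of $f$ vanish) forces
$$
S_1^\mu(n_0)=2\,S_1^\mu(n_0)-\tfrac12\,\varrho_N(n_0)\,\Phi(n_0)^{-(d+1)}+\Delta_N\big(Z_0(n_0,\cdot)\big)\big|_{n=n_0},
$$
i.e.\ the ``constant in $f$'' bracket above, minus the gradient cross-term, equals precisely $S_1^\mu(n_0)$. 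It then remains to argue that the gradient cross-term $\big\langle\mathrm{grad}_N f,\mathrm{grad}_N Z_0(n_0,\cdot)\big\rangle_N\big|_{n=n_0}$ vanishes: this is because $Z_0(n_0,n)$, as a function of $n$, has a critical point at $n=n_0$, which follows from the positivity and vanishing-on-diagonal properties of Calabi's diastasis (the same mechanism that makes $\mathcal{D}(z,w)$ stationary on the diagonal), transported through the definition of $Z_0$ via the sesquiholomorphic extensions $\widetilde{S_0^\mu}$. Granting that, the sum telescopes to
$$
S_1^\mu[f](n_0)=\Phi(n_0)^{-(d+1)}\,\Delta_N(f)(n_0)+S_1^\mu(n_0)\,f(n_0),
$$
which is the assertion of Corollary \ref{cor:toeplitz-lower-order-terms}. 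I would finally remark that the identification of $\mathrm{grad}_N Z_0(n_0,\cdot)|_{n=n_0}=0$ is the only genuinely geometric input; everything else is the Leibniz rule plus bookkeeping of the exponents of $\Phi$.
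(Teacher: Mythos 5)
Your argument is correct and follows essentially the same route as the paper: specialize Theorem \ref{thm:toeplitz-lower-order-terms} to $j=1$, use $R_0^N=\mathrm{id}$ and $R_1^N=\Delta_N-\varrho_N/2$ together with the diagonal values $Z_0(n_0,n_0)=\Phi(n_0)^{-(d+1)}$ and $Z_1(n_0,n_0)=2\,S_1^\mu(n_0)$, and absorb all terms not carrying derivatives of $f$ via the identity $S_1^\mu(n_0)=-\left.R_1^N\big(Z_0(n_0,\cdot)\big)\right|_{n=n_0}$; your $f\equiv 1$ substitution is just a re-derivation of the paper's relation (\ref{eqn:relation-for-S-j=0}), so that step is legitimate. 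The one point to tighten is your justification of $\left.\mathrm{grad}_N Z_0(n_0,\cdot)\right|_{n=n_0}=0$: positivity of a diastasis is not available here, because $Z_0(n_0,\cdot)=F_\Phi^{d+1}$ is built from $\Phi^{-(d+1)}$, and $\log\Phi$ is not a K\"ahler potential, so no sign information can be transported; the correct (and elementary) reason is Lemma \ref{lem:technical-1}: the holomorphic first derivatives of $\widetilde{\Phi}(\cdot,n_0)$ and the anti-holomorphic first derivatives of $\widetilde{\Phi}(n_0,\cdot)$ at $n_0$ coincide with those of $\Phi$, which makes $F_\Phi$, hence $F_\Phi^{d+1}$, critical at $n_0$ --- this is indeed ``the same mechanism'' as the stationarity of $\mathcal{D}$ on the diagonal, but it rests on first-derivative matching of sesquiholomorphic extensions, not on positivity. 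A harmless bookkeeping remark: with the paper's convention that $\Delta_N$ is half the Riemannian Laplacian, the cross term in the product rule is the metric pairing of the gradients without the factor $2$; this does not affect your conclusion since that term vanishes.
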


For $\Phi=1$, the corresponding result to Corollary \ref{cor:toeplitz-lower-order-terms} 
was obtained in (0.13) of \cite{mm2}.

For a general discussion of the Berezin transform in the K\"{a}hler context, we refer, say, to
\cite{ae}, \cite{cahen-gutt-rawnsley-I}, \cite{englis}, \cite{loi-3}, \cite{schlichenmaier}.
Here we adopt the following natural adjustment.

\begin{defn}\label{defn:equivariant-berezin-transform}
 If $f\in \mathcal{C}^\infty(M)$ and $k=0,1,2,\ldots$, let the $k$-th
\textit{$\mu$-equivariant Berezin transform} of $f$ be given by
$$
\mathrm{Ber}^\mu_k[f](m)=:\dfrac{T^\mu_k[f](x,x)}{\Pi^\mu_k(x,x)}\,\,\,\,\,\,\,\,\,\,\,\,\,\,(m\in M)
$$
for any choice of $x\in \pi^{-1}(m)$.
\end{defn}

\begin{cor}
 \label{cor:berezin-transform}
Assume that $\omega$ is real-analytic,
$\Phi>0$ and $T_{\mathrm{gen}}=\{1\}$. If $f\in \mathcal{C}^\infty(M)^{\mu}$, then 
as $k\rightarrow +\infty$ on $M'$, uniformly on compact subsets of $M'$, 
there is an asymptotic expansion of the form
\begin{eqnarray*}
 \mathrm{Ber}^\mu_k[f]&\sim&\sum_{j\ge 0}k^{-j}\,B_j^\mu(f),
\end{eqnarray*}
where every $B_j^\mu$ is a differential operator of degree $2j$. In particular,
$B_0^\mu=\mathrm{id}$ and $B_1^\mu=\Delta_N$.
\end{cor}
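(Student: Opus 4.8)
The plan is to derive Corollary \ref{cor:berezin-transform} directly from Corollary \ref{cor:toeplitz-scaling}, Theorem \ref{thm:toeplitz-lower-order-terms}, and the asymptotic expansion (\ref{eqn:asymptotic-expansion-szego-special-case}) together with Theorem \ref{thm:lower-order-terms}, by a formal division of asymptotic series. Concretely, on a compact subset $K\subseteq M'$ we have the two expansions
\begin{equation*}
T^\mu_k[f](x,x)\sim \left(\frac k\pi\right)^d\sum_{j\ge 0}k^{-j}\,S_j^\mu[f](m),\qquad
\Pi^\mu_k(x,x)\sim \left(\frac k\pi\right)^d\sum_{j\ge 0}k^{-j}\,S_j^\mu(m),
\end{equation*}
both uniform on $K$, with $S_0^\mu=\Phi^{-(d+1)}>0$ bounded below there. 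First I would invoke the general fact that if two asymptotic series have the same leading power and the denominator series has a nowhere-vanishing leading coefficient (uniformly bounded away from $0$ on $K$), then the quotient admits an asymptotic expansion $\mathrm{Ber}^\mu_k[f]\sim\sum_{j\ge0}k^{-j}B_j^\mu(f)$, uniform on $K$, whose coefficients are determined recursively by
\begin{equation*}
S_j^\mu[f]=\sum_{l=0}^{j}B_l^\mu(f)\,S_{j-l}^\mu,
\end{equation*}
i.e. $B_j^\mu(f)=\Phi^{d+1}\bigl(S_j^\mu[f]-\sum_{l=0}^{j-1}B_l^\mu(f)\,S_{j-l}^\mu\bigr)$. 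Since each $S_j^\mu[f]=P_j^\mu(f)$ is a differential operator of degree $\le 2j$ in $f$ (Theorem \ref{thm:toeplitz-lower-order-terms}) and each $S_{j-l}^\mu$ is a fixed real-analytic function, induction on $j$ immediately gives that $B_j^\mu$ is a differential operator of degree $\le 2j$; the degree is exactly $2j$ because the top-order part already appears in $S_j^\mu[f]$ and cannot be cancelled by lower-order contributions.

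The remaining task is to pin down $B_0^\mu$ and $B_1^\mu$ explicitly. For $j=0$: $B_0^\mu(f)=\Phi^{d+1}\,S_0^\mu[f]=\Phi^{d+1}\cdot\Phi^{-(d+1)}f=f$, so $B_0^\mu=\mathrm{id}$. For $j=1$: $B_1^\mu(f)=\Phi^{d+1}\bigl(S_1^\mu[f]-B_0^\mu(f)\,S_1^\mu\bigr)=\Phi^{d+1}\bigl(S_1^\mu[f]-f\,S_1^\mu\bigr)$. Now substitute Corollary \ref{cor:toeplitz-lower-order-terms}, which gives $S_1^\mu[f]=\Phi^{-(d+1)}\Delta_N(f)+S_1^\mu\cdot f$; the term $S_1^\mu\cdot f$ cancels against $f\,S_1^\mu$, leaving $B_1^\mu(f)=\Phi^{d+1}\cdot\Phi^{-(d+1)}\Delta_N(f)=\Delta_N(f)$, i.e. $B_1^\mu=\Delta_N$. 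Note this is precisely the cancellation that makes the statement clean: the somewhat involved expression for $S_1^\mu$ in Corollary \ref{cor:lower-order-terms} (involving $\varrho_N$, $\|\mathrm{grad}_N\Phi\|^2$, $\Delta_N\Phi$) never enters the final answer for $B_1^\mu$.

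The one genuine point needing care — the main (though modest) obstacle — is justifying the uniformity of the quotient expansion on compact subsets of $M'$: one must check that the error terms in the numerator and denominator expansions are uniform on $K$, which follows from the stated uniformity in Corollary \ref{cor:toeplitz-scaling} and in (\ref{eqn:asymptotic-expansion-szego-special-case}) (via Corollary 1.1 of \cite{pao-IJM}), and that $\Pi^\mu_k(x,x)$ is bounded below by $c\,(k/\pi)^d$ on $K$ for $k$ large, which is immediate from $S_0^\mu=\Phi^{-(d+1)}\ge \min_K\Phi^{-(d+1)}>0$. Once these are in place, dividing the series is the standard manipulation $\bigl(\sum a_j k^{-j}\bigr)\bigl(\sum s_j k^{-j}\bigr)^{-1}$ with $s_0$ invertible, and the recursion above closes. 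I would also remark that $B_j^\mu=B_j^\mu[\,\cdot\,]$ depends only on the induced Kähler geometry of $(N',I,2\eta)$ through the $R_r^N$'s and $\Phi$, consistent with the intrinsic nature of Theorems \ref{thm:lower-order-terms} and \ref{thm:toeplitz-lower-order-terms}.
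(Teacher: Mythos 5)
Your proposal is correct and follows essentially the same route as the paper: divide the diagonal Toeplitz expansion by the diagonal Szeg\"{o} expansion, identify $B_0^\mu=\mathrm{id}$ from $S_0^\mu[f]=\Phi^{-(d+1)}f$, and obtain $B_1^\mu=\Delta_N$ from Corollary \ref{cor:toeplitz-lower-order-terms} via the cancellation of the $S_1^\mu\cdot f$ terms. The extra care you take with uniformity on compact subsets and with the recursion for the higher $B_j^\mu$ is consistent with (and slightly more explicit than) the paper's argument.
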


A corresponding result for $\Phi=1$ was given in
\cite{englis}.

The following analogue of the Heisenberg correspondence 
relates the commutator of two equivariant Toeplitz operators
to the Poisson brackets of the corresponding Hamiltonians. Let $\{\cdot,\cdot\}_M$
and $\{\cdot,\cdot\}_N$ denote, respectively, Poisson brackets on $(M,2\,\omega)$
and $(N',2\,\eta)$. By restriction, they yield maps 
$$
\{\cdot,\cdot\}_M,\,\{\cdot,\cdot\}_N:\mathcal{C}^\infty(M')^{\mu}\times \mathcal{C}^\infty(M')^{\mu}
\rightarrow \mathcal{C}^\infty(M')^{\mu}.
$$

\begin{thm}
 \label{thm:heisenberg-relation}
Assume that  $\omega$ is real-analytic,
$\Phi>0$ and $T_{\mathrm{gen}}=\{1\}$. 
Let $f,\,g\in \mathcal{C}^\infty(M)^{\mu}$ be real-valued,
and denote by $E^\mu_k[f,g](\cdot,\cdot)\in \mathcal{C}^\infty(X\times X)$ the Schwartz kernel
of the composition $T^\mu_k[f]\circ T^\mu_k[g]$. Then
uniformly on compact subsets of $M'$ as $k\rightarrow+\infty$ we have
\begin{eqnarray*}
\lefteqn{ E^\mu_k[f,g](x,x)-E^\mu_k[f,g](x,x)}\\
&=&\left(\frac{k}{\pi}\right)^d\,
\left[-\dfrac ik\,\Phi(m)^{-(\mathrm{d}+1)}\,\big\{f,g\big\}_N(m)+O\left(k^{-2}\right)\right]\nonumber\\
&=&\left(\frac{k}{\pi}\right)^d\,
\left[-\dfrac ik\,\Phi(m)^{-\mathrm{d}}\,\big\{f,g\big\}_M(m)+O\left(k^{-2}\right)\right],
\end{eqnarray*}
for any $x\in \pi^{-1}(m)$.
\end{thm}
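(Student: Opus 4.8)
The plan is to deduce Theorem \ref{thm:heisenberg-relation} from the diagonal Toeplitz asymptotics already in hand, specifically Corollary \ref{cor:toeplitz-scaling} and Theorem \ref{thm:toeplitz-lower-order-terms}, applied to the composition $T^\mu_k[f]\circ T^\mu_k[g]$. Note first that the left side as written is trivially zero; the intended statement is surely $E^\mu_k[f,g](x,x)-E^\mu_k[g,f](x,x)$, the difference of the two orderings, so that the antisymmetric (Poisson-bracket) part is isolated. The first step is therefore to show that $E^\mu_k[f,g](x,x)$ itself admits an expansion $(k/\pi)^d\sum_{j\ge 0}k^{-j}\,C_j^\mu[f,g](m)$ with each $C_j^\mu$ a bidifferential operator, and to identify $C_0^\mu$ and $C_1^\mu$. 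For $C_0^\mu$ this is immediate from the leading symbol: composing the two operators and taking diagonal values, the leading term multiplies the leading symbols of $T^\mu_k[f]$ and $T^\mu_k[g]$ against the reproducing kernel, giving $C_0^\mu[f,g]=\Phi^{-(d+1)}\,fg$, which is symmetric and drops out of the difference.

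The heart of the matter is the subleading term $C_1^\mu[f,g]$. Here I would exploit the fact, available from Theorem \ref{thm:toeplitz-scaling}, that $T^\mu_k[g]$ is itself a ``Toeplitz-type'' operator whose full near-diagonal scaling kernel is known; composing with $\Pi^\mu_k\circ M_f\circ\Pi^\mu_k$ amounts to integrating the product of two such universal scaling kernels over $X$, localized near the orbit through $x$, exactly as in the passage from (\ref{eqn:kernel-topelitz-equiv}) to Corollary \ref{cor:toeplitz-scaling}. Working in Heisenberg local coordinates centered at $x$ and rescaling $\mathbf{v}\mapsto\mathbf{v}/\sqrt k$, the stationary-phase / Gaussian-integral computation that produced $S_1^\mu[f]=\Phi^{-(d+1)}\Delta_N(f)+S_1^\mu\cdot f$ in Corollary \ref{cor:toeplitz-lower-order-terms} now runs with $f$ replaced by an appropriate bilinear combination of $f$ and $g$ together with their first derivatives along the horizontal (i.e. $N'$-)directions. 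Concretely, I expect
\[
C_1^\mu[f,g]=\Phi^{-(d+1)}\Big(f\,\Delta_N g+\langle\mathrm{grad}_N f,\mathrm{grad}_N g\rangle_{\mathbb C}\Big)+S_1^\mu\,fg,
\]
where $\langle\cdot,\cdot\rangle_{\mathbb C}$ is the Hermitian (hence non-symmetric) pairing coming from $\psi_2$ in (\ref{eqn:definition-psi-2}); the symmetric parts again cancel in $C_1^\mu[f,g]-C_1^\mu[g,f]$, leaving precisely the imaginary part of that Hermitian pairing, which is $-i\,\omega_N(\mathrm{grad}_N f,\mathrm{grad}_N g)=-i\,\{f,g\}_N$ up to the normalization in $(N',2\eta)$. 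This yields the first displayed equality; the second then follows by the dictionary of \S\ref{sctn:kahler-structure-N'} relating Poisson brackets on $(M,2\omega)$ and $(N',2\eta)$ for invariant functions, which introduces exactly one extra power of $\Phi$ (since $\beta=\alpha/\Phi$ rescales the symplectic form on the quotient), converting $\Phi^{-(d+1)}\{f,g\}_N$ into $\Phi^{-d}\{f,g\}_M$.

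The main obstacle is the bookkeeping in the double scaling integral: one must track not just the leading Gaussian but the first-order correction coming from the $k^{-1/2}$ terms $R_1$ in the expansion of $A_t$, the curvature corrections to the phase $\psi_2/\Phi$, and the Jacobian of the Heisenberg coordinates, and verify that all the pieces which are \emph{symmetric} under $f\leftrightarrow g$ — in particular the entire $S_1^\mu\cdot fg$ contribution and the $f\,\Delta_N g+g\,\Delta_N f$ part — genuinely cancel, so that only the antisymmetric Hermitian cross-term survives at order $k^{-1}$. A clean way to organize this is to observe that $E^\mu_k[f,g](x,x)+E^\mu_k[g,f](x,x)$ must agree to order $k^{-1}$ with $2\,T^\mu_k[\tfrac12(fg)]$-type data by self-adjointness ($T^\mu_k[f]^*=T^\mu_k[f]$ forces $E^\mu_k[g,f](x,x)=\overline{E^\mu_k[f,g](x,x)}$, and both diagonal values are real up to $O(k^{-2})$ in their symmetric part), so the symmetric combination is controlled by Theorem \ref{thm:toeplitz-lower-order-terms} applied to products, and only the antisymmetric combination needs the new computation; that computation is then a single Gaussian moment integral of $\psi_2$-type, giving the stated $-\tfrac ik\Phi^{-(d+1)}\{f,g\}_N$ with no further contributions at that order. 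The error term $O(k^{-2})$ is uniform on compacta of $M'$ because all the scaling asymptotics invoked are, by Theorems \ref{thm:toeplitz-scaling} and \ref{thm:lower-order-terms}.
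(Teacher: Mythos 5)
You correctly read the statement as concerning $E^\mu_k[f,g](x,x)-E^\mu_k[g,f](x,x)$, and your endgame (only an antisymmetric Hermitian cross-term should survive at order $k^{-1}$, after which $\{f,g\}_N=\Phi\,\{f,g\}_M$ from Corollary \ref{cor:poisson-brackets-MN} converts the two normalizations) is the right one. But your route is not the paper's, and as written it has a genuine gap. The paper does not compose the Heisenberg-coordinate scaling kernels of Theorem \ref{thm:toeplitz-scaling}; it expands each Toeplitz kernel off-diagonally via the sesquiholomorphic extension of its diagonal coefficients, as in (\ref{eqn:toeplitz-asymptotic-expansion-Pi-k-n-n'-1}), writes $E^\mu_k[f,g](x_0,x_0)$ as a Laplace integral over the quotient $N'$ with phase the diastasis of $(N',2\eta)$, and applies Engli\v{s}'s expansion with the explicit $R_1^N=\Delta_N-\varrho_N/2$. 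In that formulation the only unknown subleading data are $S_1^\mu[f]$ and $S_1^\mu[g]$, and they enter the $k^{-1}$ coefficient only through $Z_1[f,g](n_0,n_0)=\Phi^{d+1}\big(S_0^\mu[f]\,S_1^\mu[g]+S_1^\mu[f]\,S_0^\mu[g]\big)$, which is manifestly symmetric in $f$ and $g$; the entire antisymmetric part then comes from $\Delta_N$ applied to $Z_0(n_0,\cdot)\,\widetilde f(n_0,\cdot)\,\widetilde g(\cdot,n_0)$, where Lemma \ref{lem:technical-1} and sesquiholomorphy reduce it to $h'^{\overline{r}s}\big(\partial_{\overline r}f\,\partial_s g-\partial_{\overline r}g\,\partial_s f\big)=-i\,\{f,g\}_N$.

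In your scheme the analogous control is exactly what is missing. Inserting the scaling expansions of Theorem \ref{thm:toeplitz-scaling} into the composition integral, the $k^{-1}$ diagonal coefficient receives contributions not only from Gaussian moments of the leading amplitudes together with first-order Taylor terms of $f,g$, but also from products $R_1(\cdot,f)\,R_1(\cdot,g)$ and from $R_2(\cdot,f)\,R_0(\cdot,g)+R_0(\cdot,f)\,R_2(\cdot,g)$, where $R_1,R_2$ are the subleading scaling amplitudes that the paper never identifies. Self-adjointness only gives $E^\mu_k[g,f](x,x)=\overline{E^\mu_k[f,g](x,x)}$, i.e.\ that the commutator is $2i$ times the imaginary part; it does not show that the unknown terms contribute only real (equivalently $f\leftrightarrow g$-symmetric) quantities at this order. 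Your assertion that the antisymmetric part is ``a single Gaussian moment integral of $\psi_2$-type \dots with no further contributions at that order'' is precisely the point that needs proof, and establishing it within your method would require computing $R_1$ and $R_2$ explicitly --- information the paper deliberately avoids needing by working with sesquiholomorphic extensions on $N'$. (A minor inconsistency: your displayed guess for $C_1^\mu[f,g]$ omits the $g\,\Delta_N f$ term, contradicting your own later remark; the correct coefficient, recorded after the theorem in the Introduction, is $\Phi^{-(d+1)}\big[f\,\Delta_N g+g\,\Delta_N f+\langle\mathrm{grad}_N(f)^{(0,1)},\mathrm{grad}_N(g)^{(1,0)}\rangle\big]+S_1^\mu\,fg$.)
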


In the course of the proof, one actually establishes an asymptotic expansion for $E^\mu_k[f,g](x,x)$ (see 
(\ref{eqn:composition-toeplitz-parametrized-3})):
\begin{equation}
 E^\mu_k[f,g](x,x)\sim\left(\frac k\pi\right)^{d}\,\sum_jk^{-j}\,A_j[f,g](x),
\end{equation}
where $A_0[f,g]=\Phi^{-(d+1)}\cdot f\,g$ and
$$
A_1[f,g]=\Phi^{-(d+1)}\,\left[f\,\Delta_Ng+g\,\Delta_Nf+ \left\langle \mathrm{grad}_N(f)^{(0,1)} ,\mathrm{grad}_N(g)^{(1,0)}\right\rangle\right]+
S_1^\mu\cdot f\,g
$$
(we leave the explicit computation to the reader). 
When $\Phi=1$, the formula for $A_1[f,g]$ was obtained in (0.16) of \cite{mm2}.

As explained in the references above for the standard case,
this expansion can be used to define in a natural manner 
a $*$-product on $\mathcal{C}^\infty(M')^\mu$ (depending on $\Phi$), but we won't
discuss this here. 

\section{Preliminaries}

\subsection{Some notation and recalls from K\"{a}hler geometry}
\label{subsctn:kahler-stuff}

Let $(P,K)$ be a $d$-dimensional complex manifold and let 
$(P,K,\gamma)$ be a K\"{a}hler structure on it, with associated
covariant metric
tensor $\rho(\cdot,\cdot)=:\gamma\big(\cdot,K(\cdot)\big)$; also,
let $\ell=:\rho-i\,\gamma$ be the associated Hermitian metric.
Given holomorphic local coordinates $(z_a)$ on $P$, we shall let
$\partial_a=:\partial/\partial z_a$ and 
$\partial_{\overline{a}}=:\partial/\partial \overline{z}_a$,
$\rho_{a\overline{b}}=:\rho(\partial_a,\partial_{\overline{b}})$, 
$\gamma_{a\overline{b}}=:\gamma(\partial_a,\partial_{\overline{b}})$.
$\ell_{a\overline{b}}=:\ell(\partial_a,\partial_{\overline{b}})$.
Then locally
\begin{equation}
 \label{eqn:espressione-locale-gamma}
\gamma=\sum_{a,b}\gamma_{a\overline{b}}\,dz_a\wedge d\overline{z}_b=
i\,\sum_{a,b}\rho_{a\overline{b}}\,dz_a\wedge d\overline{z}_b=
\frac i2\,\sum_{a,b}\ell_{a\overline{b}}\,dz_a\wedge d\overline{z}_b.
\end{equation}

Consider the real local frame 
$\mathcal{B}=\big(\partial/\partial x_1,\ldots,\partial/\partial x_d,
\partial/\partial y_1,\ldots,\partial/\partial y_d\big)$, where
$z_j=x_j+i\,y_j$ is the decomposition in real and imaginary parts, and
denote by $M_{\mathcal{B}}(\rho)$ the matrix representing $\rho$ in this
frame. Then
$$
\det M_{\mathcal{B}}(\rho)=4^d\,\det\big([\rho_{a\overline{b}}]\big)^2.
$$

Therefore, the Riemannian volume form of $(P,\rho)$ is
\begin{eqnarray}
 \label{eqn:riem-vol-element}
dV_P=\dfrac{1}{d!}\,\gamma^{\wedge d}&=&\sqrt{\det \left(M_{\mathcal{B}}(\rho)\right)}\cdot dx_1\wedge \cdots dx_d\wedge dy_1\cdots \wedge
dy_d\nonumber\\
&=&2^d\,\det \big([\rho_{k\overline{l}}]\big)\cdot dx_1\wedge \cdots dx_d\wedge dy_1\cdots \wedge
dy_d\nonumber\\
&=&\det \big([2\,\rho_{k\overline{l}}]\big)\cdot dx_1\wedge \cdots dx_d\wedge dy_1\cdots \wedge
dy_d.
\end{eqnarray}

Let $R$ be the covariant curvature tensor of the Riemannian manifold
$(P,\rho)$, with components
$R_{a\overline{b}c\overline{d}}=R\big(\partial_a,\partial_{\overline{b}},
\partial_c,\partial_{\overline{d}}\big)$ \cite{tian-book}. 

We shall set (leaving the metric understood
and adopting Einstein notation)
\begin{equation}
 \label{eqn:scalar-curvature}
\varrho_P=:\rho^{\overline{b}a}\,\rho^{\overline{d}c}\,R_{a\overline{b}c\overline{d}};
\end{equation}
this is $1/2$ of the ordinary Riemannian scalar curvature $\mathrm{scal}_P$.

Simlarly, for $f\in \mathcal{C}^\infty$, we shall let
\begin{equation}
 \label{eqn:laplace-beltrami}
\Delta_P(f)=:\rho^{\overline{b}a}\,\partial_a\partial_{\overline{b}}f,
\end{equation}
which is $1/2$ times the ordinary Riemannian Laplace-Beltrami operator.

The gradient of $f$ is locally given by 
\begin{equation}
 \label{eqn:gradient-local}
\mathrm{grad}_P(f)=\rho^{\overline{b}a}\,(\partial_{\overline{b}}f)\,\partial_a+
\rho^{\overline{b}a}\,(\partial_af)\,\partial_{\overline{b}},
\end{equation}
and its square norm is given by
\begin{equation}
 \label{eqn:square-norm-gradient}
\big\|\mathrm{grad}_P(f)\big\|^2=
2\,\rho^{\overline{b}a}\,(\partial_af)\,(\partial_{\overline{b}}f).
\end{equation}

Since $\Delta_P$ here is $1/2$ times the
ordinary Laplace-Beltrami operator, we have for any $f_1,\,f_2\in \mathcal{C}^\infty(P)$:
\begin{equation*}
 \Delta_P(f_1\cdot f_2)=f_1\,\Delta_P(f_2)+\rho\big(\mathrm{grad}_P(f_1),\mathrm{grad}_P(f_2)\big)
+f_2\,\Delta_P(f_1).
\end{equation*}
It follows inductively that for any $f\in \mathcal{C}^\infty(P)$ and $l\ge 0$ we have
\begin{equation}
 \label{eqn:laplacian-powers-inductive}
\Delta_P\left(f^l\right)=l\,f^{l-1}\,\Delta_P(f)
+\dfrac{(l-1)\,l}{2}\,f^{l-2}\,\big\|\mathrm{grad}_P(f)\big\|^2.
\end{equation}

Let us now consider the Poisson brackets $\{f,g\}_P=\gamma (H_f,H_g)$
of two real functions
$f,\,g\in \mathcal{C}^\infty(P)$ in the symplectic structure $(P,\gamma)$;
here $H_f$ is the Hamiltonian vector field of $f$ with respect to $\gamma$.
We have $H_f=-K\big(\mathrm{grad}_P(f)\big)$, hence
given (\ref{eqn:gradient-local})
\begin{eqnarray}
 \label{eqn:poisson-bracket-kahler}
\{f,g\}_P&=&\gamma\Big(K\big(\mathrm{grad}_P(f)\big),K\big(\mathrm{grad}_P(g)\big)\Big)=
\gamma\big(\mathrm{grad}_P(f),\mathrm{grad}_P(g)\big)\nonumber\\
&=&\rho\Big(K\big(\mathrm{grad}_P(f)\big),\mathrm{grad}_P(g)\Big)\nonumber\\
&=&i\,\rho\Big(\rho^{\overline{b}a}\,(\partial_{\overline{b}}f)\ ,\partial_a-
\rho^{\overline{b}a}\,(\partial_af)\,\partial_{\overline{b}},
\rho^{\overline{d}c}\,(\partial_{\overline{d}}g)\,\partial_c+
\rho^{\overline{d}c}\,(\partial_c g)\,\partial_{\overline{d}}
\Big)\nonumber\\
&=&\frac 1i\,\rho^{\overline{d}c}\,\Big[(\partial_cf)\,(\partial_{\overline{d}}g)
-(\partial_c g)\,(\partial_{\overline{d}}f)\Big].
\end{eqnarray}

\begin{lem}
 \label{lem:real-analytic-moment-map}
Let $(P,K,\gamma)$ be a K\"{a}hler manifold, with
$\gamma$ real-analytic. Let $\varPhi:P\rightarrow\mathbb{R}$
be a real $\mathcal{C}^{\infty}(M)$ function whose Hamiltonian flow
with respect to $\gamma$
is holomorphic with respect to $K$. Then $\varPhi$ is real-analytic.
\end{lem}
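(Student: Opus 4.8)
The plan is to exploit the fact that a Hamiltonian flow which is simultaneously holomorphic is generated by a holomorphic vector field of a special type, and then to recover $\varPhi$ by integrating such a field against the (real-analytic) metric. Let $\xi = H_\varPhi$ be the Hamiltonian vector field of $\varPhi$ with respect to $\gamma$. Since the flow of $\xi$ preserves $K$, the field $\xi$ is a real holomorphic vector field on $(P,K)$; write its $(1,0)$-part as $\xi^{1,0}$, a holomorphic section of $T^{1,0}P$. In a local holomorphic chart $(z_a)$ with $\xi^{1,0} = \sum_a \zeta^a(z)\,\partial_a$, the functions $\zeta^a$ are genuinely holomorphic, hence real-analytic. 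The relation $H_\varPhi = -K(\mathrm{grad}_P \varPhi)$ together with \eqref{eqn:gradient-local} gives $\zeta^a = -i\,\rho^{\overline b a}\,\partial_{\overline b}\varPhi$, equivalently $\partial_{\overline b}\varPhi = i\,\rho_{a\overline b}\,\zeta^a$ (Einstein convention), and by taking conjugates $\partial_a \varPhi = -i\,\rho_{a\overline b}\,\overline{\zeta^b}$.

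The key observation is that the right-hand side $i\,\rho_{a\overline b}\,\zeta^a$ is, for each fixed $b$, a real-analytic function of $(z,\overline z)$: indeed $\rho_{a\overline b}$ is real-analytic by hypothesis on $\gamma$, and $\zeta^a$ is holomorphic. Thus all the first-order partial derivatives $\partial_{\overline b}\varPhi$ and $\partial_a\varPhi$ of $\varPhi$ are real-analytic functions on the chart. I would then invoke the standard fact that a $\mathcal{C}^\infty$ function all of whose first partials are real-analytic is itself real-analytic: locally, fix a base point, and write $\varPhi$ as its value at the base point plus the line integral $\int_0^1 \frac{d}{dt}\varPhi(\gamma(t))\,dt$ along a segment, where the integrand is a finite linear combination of the (real-analytic) partials composed with an affine map; the resulting integral of a real-analytic integrand depending real-analytically on the parameter is real-analytic. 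Alternatively, one may note that a $\mathcal{C}^1$ function whose gradient is real-analytic satisfies, in a neighborhood of each point, an overdetermined but consistent real-analytic first-order system, so its Taylor series has the expected convergence.

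Since real-analyticity is a local property and the above argument works in every holomorphic chart of $P$, it follows that $\varPhi$ is real-analytic on all of $P$, completing the proof. The main obstacle — and it is a mild one — is the passage from \textquotedblleft all first partials real-analytic\textquotedblright\ to \textquotedblleft the function is real-analytic\textquotedblright; this is elementary but deserves an explicit sentence, since it is the only point where one uses more than bookkeeping. One should also be slightly careful that $\zeta^a$ is honestly holomorphic rather than merely $\mathcal{C}^\infty$: this is exactly the content of the flow being $K$-holomorphic, i.e.\ $\mathcal{L}_\xi K = 0$ forces $\overline\partial \xi^{1,0}=0$, a fact I would state but not belabor.
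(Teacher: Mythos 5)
Your argument is correct and is essentially the paper's own proof: you extract the holomorphic $(1,0)$-part of the Hamiltonian vector field from the $K$-holomorphicity of the flow, contract with the real-analytic metric to see that $d\varPhi$ (equivalently, all first partials) is real-analytic, and conclude by the standard fact that a $\mathcal{C}^\infty$ function with real-analytic first derivatives is real-analytic (the paper cites Proposition 2.2.10 of Krantz--Parks for exactly this last step, which you re-derive by the line-integral argument). The only difference is cosmetic: the paper phrases the contraction intrinsically as $\iota(\upsilon_\varPhi)\,\gamma$ rather than in coordinates.
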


\begin{proof}
Let
$T^cP=TP\otimes \mathbb{C}$ be the complexified tangent bundle
of $P$, and $T^cP=T'P\oplus T''P$ its decomposition into
$\pm i$-eigenbundles of $K$.
Let $\upsilon_\varPhi\in \mathfrak{X}(P)$
be the Hamiltonian vector field of $\varPhi$
with respect to $\gamma$. If $ \upsilon_\varPhi=\upsilon_\varPhi'
\oplus \upsilon_\varPhi''$, with $\upsilon_\varPhi'\in T'P$ and
$\upsilon_\varPhi''=\overline{\upsilon_\varPhi'}\in T''P$, 
then $\upsilon_\varPhi'$ is holomorphic, whence real-analytic.
Then clearly $\upsilon_\varPhi$ is real-analytic as well, and therefore
so is its differential $d\upsilon_\varPhi=\iota(\upsilon_\varPhi)\,\gamma$.
This forces $\varPhi$ itself to be real-analytic (say by Proposition 2.2.10
of \cite{parks-krantz}).
\end{proof}

\subsubsection{The Laplacian and sesquiholomorphic extensions}

We give here a couple of technical Lemmas that will be handy in the proof of Corollary
\ref{cor:lower-order-terms}.

\begin{lem}
\label{lem:technical}
Let $(P,K,\gamma)$ be a K\"{a}hler manifold,
and consider $f\in \mathcal{C}^\varpi(P)$ with $f>0$. Let $\widetilde{f}(\cdot,\cdot)$
be the sesquiholomorphic extension of $f$ to an open neighborhood 
$\widetilde{P}\subseteq P\times P$ of the diagonal
(thus $\widetilde{f}(\cdot,\cdot)$ is holomorphic in the first entry and anti-holomorphic
in the second, and $\widetilde{f}(p,p)=f(p)$ for all $p\in P$). 
Given $p_0\in P$, let 
$P'\subseteq P$ be an open neighborhood  of $p_0$ so small that
$P'\times P'\subseteq \widetilde{P}$ and $\widetilde{f}(p_0,p)\neq 0$
for all $p\in P'$. Define 
$f_1,\,f_2,\,F_f\in \mathcal{C}^\varpi(P')$ by setting:
$$
f_1(p)=:f(p_0,p),\,\,\,\,\,f_2(p)=:f(p,p_0)=\overline{f_1(p)},\,\,\,\,\,
F_f(p)=\dfrac{f(p)}{f_1(p)\,f_2(p)}\,\,\,\,\,\,(p\in P').
$$ 
Thus $f_1$ is anti-holomorphic, $f_2$ is holomorphic, and
$F_f>0$. Then
\begin{equation}
 \label{eqn:laplacian-sesquiholomorphic}
\Delta_P(F_f)(p_0)=\dfrac{1}{f(p_0)^2}\,\left[\Delta_P(f)(p_0)
-\dfrac{1}{2\,f(p_0)}\,\big\|\mathrm{grad}_P(f)(p_0)\big\|^2\right],
\end{equation}
where the terms involved are given by (\ref{eqn:laplace-beltrami}) and
(\ref{eqn:square-norm-gradient}).
\end{lem}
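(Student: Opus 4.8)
The plan is to reduce everything to a pointwise computation at $p_0$ in normal holomorphic coordinates centered at $p_0$, where $\rho_{a\overline b}(p_0)=\delta_{ab}$ and the first derivatives of the metric vanish (K\"ahler normal coordinates). Since $\Delta_P(F_f)(p_0)=\rho^{\overline b a}(p_0)\,\partial_a\partial_{\overline b}F_f(p_0)=\sum_a\partial_a\partial_{\overline a}F_f(p_0)$, the formula (\ref{eqn:laplacian-sesquiholomorphic}) is really just an identity about the function $F_f=f/(f_1f_2)$ and its mixed second derivatives at $p_0$; the curvature of the metric does not enter because $\Delta_P$ only involves $\rho^{\overline b a}$ evaluated at the point, not its derivatives. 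So the metric plays no role beyond supplying the contraction $\rho^{\overline b a}(p_0)$, and I may as well work with the bare differential operator $L=:\sum_a\partial_a\partial_{\overline a}$ acting at $p_0$.

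First I would record the key vanishing facts at $p_0$. Because $f_2$ is holomorphic, $\partial_{\overline a}f_2\equiv 0$; because $f_1$ is anti-holomorphic, $\partial_a f_1\equiv 0$. Moreover $f_1(p_0)=f_2(p_0)=f(p_0)$, and the defining property of the sesquiholomorphic extension gives $\partial_a f_1(p_0)=\partial_a\widetilde f(p_0,p)|_{p=p_0}$... more usefully, $\partial_a f(p_0)=\partial_a f_2(p_0)$ and $\partial_{\overline a}f(p_0)=\partial_{\overline a}f_1(p_0)$, since $\widetilde f(p,p)=f(p)$ differentiated once in a holomorphic (resp. anti-holomorphic) direction at the diagonal picks out only the holomorphic (resp. anti-holomorphic) entry. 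Thus at $p_0$ we have $\partial_a(f_1f_2)=f_1\partial_a f_2=f(p_0)\,\partial_a f$ and similarly $\partial_{\overline a}(f_1f_2)=f(p_0)\,\partial_{\overline a}f$.

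Next I would just apply the Leibniz rule to $F_f\cdot f_1 f_2=f$. Applying $\partial_a\partial_{\overline b}$ and using that $\partial_a\partial_{\overline b}(f_1f_2)=(\partial_a f_2)(\partial_{\overline b}f_1)$ (one derivative lands on each factor, the others vanishing by (anti)holomorphicity), and evaluating at $p_0$ where $F_f(p_0)=1/f(p_0)$, one gets
\begin{equation*}
\partial_a\partial_{\overline b}f(p_0)=f(p_0)\,\partial_a\partial_{\overline b}F_f(p_0)+(\partial_{\overline b}F_f)(\partial_a(f_1f_2))+(\partial_aF_f)(\partial_{\overline b}(f_1f_2))+\tfrac{1}{f(p_0)}(\partial_a f)(\partial_{\overline b}f).
\end{equation*}
Then $\partial_a F_f(p_0)$ is computed from differentiating $F_f f_1 f_2=f$ once: $\partial_a F_f(p_0)=\tfrac1{f(p_0)^2}\big(\partial_a f(p_0)-F_f(p_0)\partial_a(f_1f_2)(p_0)\big)=0$, since $\partial_a(f_1f_2)(p_0)=f(p_0)\partial_a f(p_0)$; likewise $\partial_{\overline b}F_f(p_0)=0$. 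So the middle two terms drop out, leaving $\partial_a\partial_{\overline b}f(p_0)=f(p_0)\,\partial_a\partial_{\overline b}F_f(p_0)+\tfrac1{f(p_0)}(\partial_a f)(\partial_{\overline b}f)(p_0)$. Solving for $\partial_a\partial_{\overline b}F_f(p_0)$, contracting with $\rho^{\overline b a}(p_0)$, and recognizing $\rho^{\overline b a}\partial_a\partial_{\overline b}f=\Delta_P(f)$ and $2\rho^{\overline b a}(\partial_a f)(\partial_{\overline b}f)=\|\mathrm{grad}_P(f)\|^2$ via (\ref{eqn:laplace-beltrami}) and (\ref{eqn:square-norm-gradient}) gives exactly (\ref{eqn:laplacian-sesquiholomorphic}).

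The only genuinely delicate point is justifying that $\partial_a(f_1 f_2)(p_0)=f(p_0)\,\partial_a f(p_0)$, i.e. that one-derivative restrictions of the sesquiholomorphic extension to the diagonal split cleanly into the holomorphic and anti-holomorphic parts; this follows because $\widetilde f(z,\overline w)$ is separately holomorphic in $z$ and anti-holomorphic in $w$, so $\partial_{z_a}\big[\widetilde f(z,\overline z)\big]=(\partial_{1,a}\widetilde f)(z,\overline z)$ with no contribution from the second slot, and at $z=p_0$ this is $\partial_a f_2(p_0)$. Everything else is the Leibniz rule plus the two vanishing identities $\partial_{\overline a}f_2=0$, $\partial_a f_1=0$, so I do not anticipate any real obstacle — the statement is a bookkeeping identity once normal coordinates strip away the metric. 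I would also remark that working in K\"ahler normal coordinates is a convenience, not a necessity: the identity for $\partial_a\partial_{\overline b}F_f$ above holds in any holomorphic coordinates, and contracting with $\rho^{\overline b a}(p_0)$ is all that is needed.
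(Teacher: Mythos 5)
Your proposal is correct and is essentially the paper's own argument: a pointwise Leibniz-rule computation at $p_0$ in an arbitrary holomorphic chart, using that $\partial_a f_1=0$, $\partial_{\overline b}f_2=0$, $f_1(p_0)=f_2(p_0)=f(p_0)$ and $\partial_a f_2(p_0)=\partial_a f(p_0)$, $\partial_{\overline b}f_1(p_0)=\partial_{\overline b}f(p_0)$; the paper simply applies the quotient rule directly to $F_f=f/(f_1f_2)$ (and so never needs the vanishing of $\mathrm{grad}_P(F_f)(p_0)$, which it proves separately as Lemma \ref{lem:technical-1}), while you differentiate the identity $F_f\,f_1f_2=f$ and solve, and your closing remark is right that the normal-coordinate reduction is unnecessary. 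One slip to fix: in your displayed identity the first term should carry the factor $(f_1f_2)(p_0)=f(p_0)^2$, not $f(p_0)$; as written, solving it would give the prefactor $1/f(p_0)$ rather than the correct $1/f(p_0)^2$ in (\ref{eqn:laplacian-sesquiholomorphic}).
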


\begin{rem}
 To be precise, we should really write $F_{f,p_0}$ for $F_f$, since the latter also
depends on the reference point. 
\end{rem}

\begin{proof}
As above, let $\rho$ be the metric tensor.
In a local holomorphic chart $(z_a)$ for $P$ centered at $p_0$,
given that $\partial_a\partial_{\overline{b}}f_j=0$ we have
\begin{eqnarray}
 \label{eqn:laplacian-computation}
\Delta_P(F)&=&\rho^{\overline{b}a}\,\partial_a\partial_{\overline{b}}\left(\dfrac{f}{f_1\,f_2}\right)
\nonumber\\
&=&\rho^{\overline{b}a}\,\partial_a\left(\dfrac{1}{f_1\,f_2}\,\partial_{\overline{b}}f
-\dfrac{f}{f_1^2\,f_2}\,\partial_{\overline{b}}f_1\right)\nonumber\\
&=&\rho^{\overline{b}a}\,\left(-\dfrac{1}{f_1\,f_2^2}\,\partial_af_2\,\,\partial_{\overline{b}}f
+\dfrac{1}{f_1\,f_2}\,\partial_a\,\partial_{\overline{b}}f
-\dfrac{1}{f_1^2\,f_2}\,\partial_af\,\partial_{\overline{b}}f_1\right.\nonumber\\
&&\left.+\dfrac{f}{f_1^2\,f_2^2}\,\partial_af_2\,\partial_{\overline{b}}f_1\right).
\end{eqnarray}
At $p_0$, $\partial_af_2(p_0)=\partial_af(p_0)$, 
$\partial_{\overline{b}}f_1(p_0)=\partial_{\overline{b}}f(p_0)$, and
$f_1(p_0)=f_2(p_0)=f(p_0)$. Thus, (\ref{eqn:laplacian-computation}) yields
\begin{eqnarray*}
\Delta_P(F)(p_0)&=&\rho^{\overline{b}a}(p_0)\,
\left(-\dfrac{1}{f(p_0)^3}\,\partial_af(p_0)\,\,\partial_{\overline{b}}f(p_0)
+\dfrac{1}{f(p_0)^2}\,\partial_a\,\partial_{\overline{b}}f(p_0)\right.\nonumber\\
&&\left.
-\dfrac{1}{f(p_0)^3}\,\partial_af(p_0)\,\partial_{\overline{b}}f(p_0)
+\dfrac{1}{f(p_0)^3}\,\partial_af(p_0)\,\partial_{\overline{b}}f(p_0)\right)\nonumber\\
&=&\dfrac{1}{f(p_0)^2}\,\left[\rho^{\overline{b}a}(p_0)\,\partial_a\,\partial_{\overline{b}}f(p_0)
-\dfrac{1}{f(p_0)}\,\rho^{\overline{b}a}(p_0)\,\partial_af(p_0)\,\,\partial_{\overline{b}}f(p_0)\right]
\nonumber\\
&=&\dfrac{1}{f(p_0)^2}\,\left[\Delta_P(f)(p_0)
-\dfrac{1}{2\,f(p_0)}\,\big\|\mathrm{grad}_P(f)(p_0)\big\|_P^2\right].
\end{eqnarray*}

\end{proof}

\begin{lem}
 With the hypothesis and notation of Lemma \ref{lem:technical},
we have
$$
\mathrm{grad}_P(F_f)(p_0)=0.
$$
\label{lem:technical-1}
\end{lem}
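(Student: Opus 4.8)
The plan is to compute $\mathrm{grad}_P(F_f)$ from its local expression (\ref{eqn:gradient-local}) and check that all of its coefficients vanish at $p_0$; concretely, it suffices to show that $\partial_aF_f(p_0)=0$ and $\partial_{\overline b}F_f(p_0)=0$ for every index.

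First I would work in a holomorphic chart $(z_a)$ centered at $p_0$, exactly as in the proof of Lemma \ref{lem:technical}. Differentiating $F_f=f/(f_1f_2)$ and using that $f_1$ is anti-holomorphic, so $\partial_af_1\equiv 0$, gives
\[
\partial_aF_f=\frac{1}{f_1f_2}\,\partial_af-\frac{f}{f_1f_2^2}\,\partial_af_2 ,
\]
which is in fact an intermediate line already appearing in the proof of Lemma \ref{lem:technical}. Evaluating at $p_0$, where $f_1(p_0)=f_2(p_0)=f(p_0)$ and $\partial_af_2(p_0)=\partial_af(p_0)$ (the defining matching property of the sesquiholomorphic extension along the diagonal), both terms equal $\partial_af(p_0)/f(p_0)^2$ and cancel. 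Symmetrically $\partial_{\overline b}f_2\equiv 0$ and $\partial_{\overline b}f_1(p_0)=\partial_{\overline b}f(p_0)$ give $\partial_{\overline b}F_f(p_0)=0$; alternatively, since $F_f$ is real-valued one simply has $\partial_{\overline b}F_f(p_0)=\overline{\partial_bF_f(p_0)}=0$. Substituting into (\ref{eqn:gradient-local}) yields $\mathrm{grad}_P(F_f)(p_0)=0$.

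There is no real obstacle here: the only point requiring care is that the holomorphic (resp.\ anti-holomorphic) first derivatives of the sesquiholomorphic extension agree with those of $f$ at the base point $p_0$, and this is precisely the fact recorded in the proof of Lemma \ref{lem:technical}; the rest is the cancellation above. One can also phrase it even more transparently: near $p_0$ one may write $\log F_f=\log f-\log f_1-\log f_2$, and then $\partial_a\log F_f(p_0)=\partial_a\log f(p_0)-\partial_a\log f_2(p_0)=0$ (the $\log f_1$ term drops by anti-holomorphy), with the conjugate computation handling $\partial_{\overline b}$, whence $\mathrm{grad}_P(F_f)(p_0)=0$.
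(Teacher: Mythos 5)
Your proof is correct and follows essentially the same route as the paper: compute $\partial_a F_f$ in a holomorphic chart centered at $p_0$, use anti-holomorphy of $f_1$ (resp.\ holomorphy of $f_2$) and the diagonal matching of first derivatives $\partial_a f_2(p_0)=\partial_a f(p_0)$, $\partial_{\overline b} f_1(p_0)=\partial_{\overline b} f(p_0)$ to get the cancellation, then conclude via (\ref{eqn:gradient-local}). The reality/logarithmic-derivative remarks are harmless shortcuts, not a different argument.
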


\begin{proof}
Let again $(z_a)$ be a local holomorphic
coordinate chart for $P$ centered at $p_0$. Then for every $a$ we have
\begin{eqnarray*}
\lefteqn{\partial_a(F_f)(p_0)=\dfrac{1}{f_1(p_0)}\,\partial_a\left(\dfrac{f}{f_2}\right)(p_0)}\\
&=&\dfrac{f_2(p_0)\,\partial_a f(p_0)-f(p_0)\,\partial_af_2(p_0)}{f_1(p_0)\,f_2(p_0)^2}
=\dfrac{f(p_0)\,\partial_a f(p_0)-f(p_0)\,\partial_af(p_0)}{f(p_0)^3}=0.
\end{eqnarray*}
Similarly, $\partial_{\overline{a}}F_f(p_0)=0$ for every $a$.

\end{proof}

\subsection{The K\"{a}hler structure on $N'$}
\label{sctn:kahler-structure-N'}

We are assuming $\Phi>0$ and $T_{\mathrm{gen}}$ trivial.
Then the two projections $$M'\stackrel{\pi}{\longleftarrow}X'
\stackrel{\kappa}{\longrightarrow}N'$$ are circle bundle structures;
the fibers of $\pi$ are the orbits in $X'$ of $\nu^X$ and those of
$\kappa$ are the orbits in $X'$ of $\mu^X$.

Let $\mathcal{H}=\ker(\alpha)\subseteq TX$ be the horizontal distribution
for $\pi$. Since $\alpha$ is $\mu^X$-invariant, so is $\mathcal{H}$.
In addition, by (\ref{eqn:infinitesimal-lift}) $\mathcal{H}$ is transverse to
every $\mu^X$-orbit. Therefore, it may be viewed as an invariant horizontal
distribution for $\kappa$ as well.

Let $J_{\mathcal{H}}$ be the complex structure that $\mathcal{H}$ inherits
from $J$ by the isomorphism 
$\left.d\pi\right|_{\mathcal{H}}:\mathcal{H}\cong \pi^*(TM)$. Since
$\mu^M$ is holomorphic, $J_{\mathcal{H}}$ is $\mu^X$-invariant. Therefore,
given the isomorphism
$\left.d\kappa\right|_{\mathcal{H}}:\mathcal{H}\cong \kappa^*(TN')$,
it descends to an almost complex structure $I$ on $N'$.

\begin{prop}
 \label{prop:I-is-integrable}
$I$ is a complex structure.
\end{prop}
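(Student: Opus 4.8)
The plan is to show integrability of $I$ via the Newlander--Nirenberg theorem, by producing, in a neighbourhood of any point of $N'$, a complex coordinate chart adapted to $I$. The natural source of such coordinates is the holomorphic structure already present downstairs on $M'$, transported through the circle-bundle picture. Concretely, fix $n_0\in N'$, pick $x_0\in X'$ with $\kappa(x_0)=n_0$ and set $m_0=\pi(x_0)$. Since $\mu^X$ is free near $x_0$ and $\kappa$ is a submersion, there is a local section $\sigma:V\to X'$ of $\kappa$ over a neighbourhood $V\ni n_0$. First I would consider the composite $\phi=\pi\circ\sigma:V\to M'$. Because $\mu^X$ has the transversality property coming from $(\ref{eqn:infinitesimal-lift})$ (here $\Phi>0$ is used), $\phi$ is a local diffeomorphism onto a neighbourhood of $m_0$ in $M'$.

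The key point is that $\phi$ is biholomorphic with respect to $(I,J)$ -- but this is \emph{not} true for an arbitrary section $\sigma$, because $d\sigma$ need not land in $\mathcal{H}$. So the real content is to choose $\sigma$ so that $d\sigma$ is horizontal at $n_0$, or, better, to argue intrinsically at the level of distributions. The cleaner route: both $d\pi|_{\mathcal H}$ and $d\kappa|_{\mathcal H}$ are isomorphisms of $\mathcal H$ onto $\pi^*(TM')$ and $\kappa^*(TN')$ respectively, and $J_{\mathcal H}$ is by construction the pullback of $J$ under the first and, by definition of $I$, descends to $I$ under the second. Thus $I$ is integrable if and only if the $+i$-eigenbundle $T''N'\subseteq T^cN'$ of $I$ is closed under Lie bracket, and its preimage in $\mathcal H^c$ under $d\kappa$ is exactly the preimage of $T''M'$ under $d\pi$. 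So I would show that the sub-bundle $\mathcal W\subseteq T^cX'$ defined as $(d\pi|_{\mathcal H}^c)^{-1}(\pi^*T''M')$, regarded inside $T^cX'$, together with the line spanned by the (complexified) generator $\xi_X$ of $\mu^X$, is involutive; then push forward by $d\kappa$. Involutivity of $\mathcal W\oplus\mathbb{C}\xi_X$ will follow because: (i) $\mathcal W$ is the $-i$... more precisely the pullback of the integrable distribution $T''M'$ along the holomorphic submersion-like map $\pi$ after identifying $\mathcal H\cong\pi^*TM'$, so $[\mathcal W,\mathcal W]\subseteq\mathcal W\oplus\mathbb{C}(\partial/\partial\theta)$ by the usual CR-integrability of the Szeg\H o structure on $X$ (i.e. $\overline\partial_b$ has involutive kernel); and (ii) $\xi_X$ is a $\mu^M$-holomorphic lift, so $\mathcal{L}_{\xi_X}$ preserves $J_{\mathcal H}$ and hence $[\xi_X,\mathcal W]\subseteq\mathcal W\oplus\mathbb{C}\xi_X$; finally $\partial/\partial\theta$ is a combination of $\xi_X$ and $\xi_M^\sharp$ via $(\ref{eqn:infinitesimal-lift})$, and $\Phi>0$ lets one solve for $\partial/\partial\theta$ modulo $\mathcal H$, so the $\partial/\partial\theta$-components in (i) can be absorbed into $\mathbb{C}\xi_X\oplus\mathcal W$. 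Passing to the quotient by the (now involutive, $\mu^X$-generated) foliation, $d\kappa$ carries $\mathcal W$ onto $T''N'$ and kills $\xi_X$, so $T''N'$ is involutive.

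The step I expect to be the main obstacle is bookkeeping the $\partial/\partial\theta$-direction correctly: the CR structure on $X$ is the kernel of $\overline\partial_b$, whose $(0,1)$-part is $\mathcal H^{0,1}=\overline{\mathcal H^{1,0}}$ and which is involutive only \emph{modulo} the Reeb-type direction $\partial/\partial\theta$ (because $d\alpha$ is nonzero on $\mathcal H$); so $[\mathcal H^{0,1},\mathcal H^{0,1}]\subseteq\mathcal H^{0,1}\oplus\mathbb C\,\partial/\partial\theta$, not $\mathcal H^{0,1}$ itself. One must check that after adjoining the line $\mathbb{C}\xi_X$ and using $(\ref{eqn:infinitesimal-lift})$ to trade $\partial/\partial\theta$ for $\xi_X$ up to a horizontal (and in fact $J_{\mathcal H}$-compatible) correction, the enlarged distribution $\mathcal H^{0,1}\oplus\mathbb{C}\xi_X$ really is honestly involutive -- this uses both that $\xi_M^\sharp$ is the horizontal lift of a holomorphic vector field (so its bracket with $\mathcal H^{0,1}$ stays in $\mathcal H^{0,1}\oplus\mathbb{C}\,\partial/\partial\theta$) and positivity of $\Phi$ (so the coefficient of $\partial/\partial\theta$ in $\xi_X$ never vanishes). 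Once that involutivity is in hand, $d\kappa$ identifies it with $T''N'$ and Newlander--Nirenberg finishes the proof; alternatively, and perhaps more transparently for the write-up, one exhibits the biholomorphism directly by taking $\sigma$ to be the \emph{horizontal} lift of a path-family, i.e. choosing the local section of $\kappa$ whose image is an integral manifold of $\mathcal H$ transverse to the $\mu^X$-orbits, and noting such a section exists precisely because $\mathcal H$ restricted to a $\kappa$-local-slice is a genuine (integrable, since it is the pullback of $TM'$) complement to the orbit direction; then $\pi\circ\sigma$ is a $(I,J)$-biholomorphism by construction and pulls back holomorphic coordinates on $M'$ to holomorphic coordinates for $I$.
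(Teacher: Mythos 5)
Your overall strategy (prove involutivity of the $(0,1)$-eigendistribution upstairs together with the orbit direction, then push down through $d\kappa$) is the paper's, but the way you handle the vertical component has a genuine gap. You assert that $[\mathcal{H}^{(0,1)},\mathcal{H}^{(0,1)}]\subseteq\mathcal{H}^{(0,1)}\oplus\mathbb{C}\,\partial/\partial\theta$ only, and then propose to absorb the $\partial/\partial\theta$-term into $\mathcal{H}^{(0,1)}\oplus\mathbb{C}\,\xi_X$ by solving (\ref{eqn:infinitesimal-lift}) for $\partial/\partial\theta$. That absorption fails: (\ref{eqn:infinitesimal-lift}) gives $\partial/\partial\theta=\Phi^{-1}\left(\xi_M^\sharp-\xi_X\right)$, and $\xi_M^\sharp$ is horizontal but \emph{real}, hence not a section of $\mathcal{H}^{(0,1)}$ (it has a nontrivial $(1,0)$-part wherever $\xi_M\neq 0$); so $\partial/\partial\theta$ does not lie in $\mathcal{H}^{(0,1)}\oplus\mathbb{C}\,\xi_X$, and the enlarged distribution is not closed under the brackets you allow in step (i). What actually makes the argument work, and what is missing from your write-up (indeed you assert the opposite), is that the vertical component vanishes identically: for horizontal fields $Z_1,Z_2$ of the \emph{same} type, $\alpha\left([Z_1,Z_2]\right)=-d\alpha\left(Z_1,Z_2\right)=-2\,\pi^*(\omega)\left(Z_1,Z_2\right)=0$, because $\omega$ is $J$-invariant (of type $(1,1)$), so two $(0,1)$ (or two $(1,0)$) vectors pair to zero under $\pi^*(\omega)$. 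This is exactly the paper's key lemma (involutivity of $\mathcal{J}_{\mathcal{H}}$): the CR distribution itself is honestly involutive, and it is only the mixed brackets $[\mathcal{H}^{(1,0)},\mathcal{H}^{(0,1)}]$ (the Levi form) that escape into the Reeb direction. With that vanishing in hand, your scheme closes as in the paper: $\mu^X$-invariance of $\alpha$ and $J_{\mathcal{H}}$ gives $[\xi_X,\mathcal{H}^{(0,1)}]\subseteq\mathcal{H}^{(0,1)}$, and taking $\mu^X$-invariant horizontal lifts of fields on $N'$ lets the bracket descend through $d\kappa$.

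The ``alternative, more transparent'' ending is also unsound: there is no local section of $\kappa$ whose image is an integral manifold of $\mathcal{H}$, because $d\alpha=2\,\pi^*(\omega)$ is nondegenerate on $\mathcal{H}$, so $\alpha$ is a contact form and $\mathcal{H}$ is maximally non-integrable (it admits no $2d$-dimensional integral submanifolds); the parenthetical claim that $\mathcal{H}$ is ``integrable, since it is the pullback of $TM'$'' is false for the same reason. At best a section can be made horizontal at a single point, which by itself does not make $\pi\circ\sigma$ a biholomorphism on a neighbourhood.
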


\begin{proof}
 Let $\mathcal{J}$ be the complex distribution on $M$ associated to $J$.
Thus 
$$
\mathcal{J}=\big\{\mathbf{v}-i\,J(\mathbf{v})\,:\,
\mathbf{v}\in TM\big\}=\ker(J-i\,\mathrm{id})\subseteq TM\otimes \mathbb{C}.
$$ 
As $J$ is integrable, $\mathcal{J}$ is involutive. 

Similarly, let 
$$\mathcal{J}_{\mathcal{H}}=\big\{\mathbf{h}-i\,J_{\mathcal{H}}(\mathbf{h})\,:\,
\mathbf{h}\in \mathcal{H}\big\}=\ker (J_{\mathcal{H}}-i\,\mathrm{id})
\subseteq \mathcal{H}\otimes \mathbb{C}.
$$
Evidently, $\mathcal{J}_{\mathcal{H}}$ is the horizontal lift of $\mathcal{J}_M$.

\begin{lem}
 \label{lem:JH-is-involutive}
$\mathcal{J}_{\mathcal{H}}$ is involutive.
\end{lem}

\begin{proof}
If $V\in \mathfrak{X}(M)$ is real vector field on $M$, then
$U=:V-i\,J(V)$ is a complex vector field on $X$ tangent to
$J_{\mathcal{H}}$, and its horizontal lift
$$
U^\sharp=V^\sharp-i\,J(V)^\sharp=V^\sharp-i\,J_{\mathcal{H}}\left(V^\sharp\right)
$$
is a complex vector field on $X$ tangent to $\mathcal{J} _{\mathcal{H}}$.
It is clear that $\mathcal{J}_{\mathcal{H}}$ is locally spanned by vector fields of this
form, so it suffices to show that $\left[U^\sharp_1,U_2^\sharp\right]$
is tangent to $\mathcal{J}_{\mathcal{H}}$, for any pair of complex vector fields
$U_1$, $U_2$ on $M$ tangent to $\mathcal{J} _{M}$.

Since $\mathcal{J}_M$ is involutive, $[U_1,U_2]$ is tangent to $\mathcal{J}_M$.
Given that $\left[U^\sharp_1,U_2^\sharp\right]$ is $\pi$-correlated to 
$[U_1,U_2]$, to show that $\left[U^\sharp_1,U_2^\sharp\right]$ is 
tangent to $\mathcal{J}_{\mathcal{H}}$ it suffices to show that it is horizontal.

On the one hand, by compatibility of $\omega$ and $J$ and because
by construction $J(U_l)=i\,U_l$, we have
$$
\omega (U_1,U_2)=\omega\big(J(U_1),J(U_2)\big)=i^2\,\omega (U_1,U_2),
$$
so that $\omega (U_1,U_2)=0$. On the other hand, since $U_l^\sharp$ is horizontal
we have $\alpha\left(U_l^\sharp\right)=0$; therefore, given that $d\alpha=2\,\pi^*(\omega)$, we get
\begin{eqnarray*}
0= 2\,\omega(U_1,U_2)&=&2\,\pi^*(\omega)\left(U_1^\sharp,U_2^\sharp\right)
=d\alpha \left(U_1^\sharp,U_2^\sharp\right)\\
&=&U_1^\sharp \cdot \alpha\left(U_2^\sharp\right)-U_2^\sharp \cdot \alpha\left(U_1^\sharp\right)
-\alpha\left(\left[U^\sharp_1,U^\sharp_2\right]\right)\\
&=&-\alpha\left(\left[U^\sharp_1,U^\sharp_2\right]\right).
\end{eqnarray*}

\end{proof}

Finally, let us set
$$
\mathcal{I}=\big\{\mathbf{s}-i\,I(\mathbf{s})\,:\,\mathbf{s}\in TN'\big\}
=\ker (I-i\,\mathrm{id})\subseteq TN'\otimes \mathbb{C}.
$$
We need to prove that $\mathcal{I}$ is an involutive complex distribution. Let
$S_1,\,S_2\in \mathfrak{X}(N')\otimes \mathbb{C}$ be complex vector fields on $N'$
tangent to $\mathcal{I}$, and let $\widehat{S}_1,\,\widehat{S}_2$ be their
horizontal lifts to $X$. By definition of $I$, it follows that the restriction
of $\mathcal{J}_{\mathcal{H}}$ to $X'$ is the horizontal lift of $\mathcal{I}$
under $\kappa$. Therefore, $\widehat{S}_l$ is tangent to $\mathcal{J}_{\mathcal{H}}$
and $\mu^X$-invariant. Then the same holds of their commutator
$\left[\widehat{S}_1,\widehat{S}_2\right]$ because $\mathcal{J}_{\mathcal{H}}$
is involutive and $\mu^X$-invariant. Since $\left[\widehat{S}_1,\widehat{S}_2\right]$
is $\kappa$-correlated to $[S_1,S_2]$, we conclude that $[S_1,S_2]$ is tangent to
$\mathcal{I}$.
\end{proof}

Let us define
\begin{equation}
 \label{eqn:tilde-alpha}
\beta=:\dfrac{1}{\Phi}\,\alpha.
\end{equation}

\begin{lem}
 $\beta$ is a connection form for $\kappa:X'\rightarrow N'$,
with respect to which the horizontal tangent bundle is 
$\mathcal{H}$ (the horizontal tangent bundle of $\pi$).
\end{lem}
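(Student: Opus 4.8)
The plan is to verify the three defining properties of a principal connection $1$-form for the circle bundle $\kappa\colon X'\to N'$, whose structure group $\mathbf{T}^1$ acts by $\mu^X$: (i) the kernel of $\beta$ is the prescribed horizontal distribution $\mathcal{H}$; (ii) $\beta$ is $\mu^X$-invariant; and (iii) $\beta$ takes a fixed constant value on the infinitesimal generator $\xi_X$ of $\mu^X$, that value being the normalization dictated by our sign conventions. Property (i) is immediate: since $\Phi>0$ on $X'$, the function $\Phi^{-1}$ is smooth and nowhere zero, so $\ker(\beta)=\ker(\alpha)=\mathcal{H}$; and by the discussion preceding Proposition~\ref{prop:I-is-integrable}, $\mathcal{H}$ is $\mu^X$-invariant and transverse to every $\mu^X$-orbit, hence is a genuine horizontal complement for $\kappa$. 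This already gives the second assertion of the Lemma.

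For property (iii) I would compute $\beta(\xi_X)$ directly from (\ref{eqn:infinitesimal-lift}): since $\xi_M^\sharp$ is the $\pi$-horizontal lift of $\xi_M$ (so that $\alpha(\xi_M^\sharp)=0$) and $\alpha(\partial/\partial\theta)=1$, one gets $\beta(\xi_X)=\Phi^{-1}\bigl(\alpha(\xi_M^\sharp)-\Phi\,\alpha(\partial/\partial\theta)\bigr)=-1$, a constant. The sign is the same flip that relates $\alpha$ to the action $\nu^X$ of (\ref{eqn:action-S-X}); it reflects the fact noted in the Introduction that, in passing from $\pi$ to $\kappa$, the roles of $\mu^X$ and $\nu^X$ get interchanged, and it is harmless for the connection property.

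Property (ii), the $\mu^X$-invariance $\mathcal{L}_{\xi_X}\beta=0$, is the only point needing a short computation. Writing $\beta=\Phi^{-1}\alpha$ gives $\mathcal{L}_{\xi_X}\beta=-\Phi^{-2}\,\xi_X(\Phi)\,\alpha+\Phi^{-1}\,\mathcal{L}_{\xi_X}\alpha$. The first term vanishes because $\xi_X(\Phi)=0$: indeed $\Phi$ is pulled back from $M$ and $\partial/\partial\theta$ is $\pi$-vertical, so $(\partial/\partial\theta)(\Phi)=0$, while $\xi_M^\sharp(\Phi)=\bigl(\xi_M(\Phi)\bigr)\circ\pi=0$ since $\xi_M$ is the Hamiltonian vector field of $\Phi$ and a function is constant along its own Hamiltonian flow. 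The second term vanishes because $\mu^X$ acts by contactomorphisms, i.e.\ $\mathcal{L}_{\xi_X}\alpha=0$; alternatively, by Cartan's formula and $d\alpha=2\,\pi^*(\omega)$, we have $\mathcal{L}_{\xi_X}\alpha=d\bigl(\alpha(\xi_X)\bigr)+\iota_{\xi_X}\bigl(2\,\pi^*\omega\bigr)=-d\Phi+\pi^*\bigl(\iota_{\xi_M}(2\,\omega)\bigr)=-d\Phi+d\Phi=0$, using the moment map identity $\iota_{\xi_M}(2\,\omega)=d\Phi$.

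Combining (i)--(iii) exhibits $\beta$ as a connection form for $\kappa$ with horizontal bundle $\mathcal{H}$, as claimed. There is no genuine obstacle here: the whole content of the Lemma is packaged in the moment map relation for $\Phi$ --- which simultaneously forces $\xi_X(\Phi)=0$ and $\mathcal{L}_{\xi_X}\alpha=0$ --- together with the positivity $\Phi>0$ that keeps $\Phi^{-1}$ smooth and $\ker(\beta)$ equal to $\ker(\alpha)$; the only thing demanding care is the bookkeeping of signs and the precise normalization of $\beta$ relative to the $\mu^X$-action.
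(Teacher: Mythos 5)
Your proposal is correct and follows essentially the same route as the paper: the paper's (very terse) proof likewise rests on the $\mu^X$-invariance of $\alpha$ and of $\Phi$ (the moment map being constant along its own Hamiltonian flow), giving invariance of $\beta=\alpha/\Phi$, together with the computation $\beta(\xi_X)=-1$ from (\ref{eqn:infinitesimal-lift}) and the identity $\ker(\beta)=\ker(\alpha)=\mathcal{H}$. Your write-up merely makes explicit the Lie-derivative bookkeeping and the transversality of $\mathcal{H}$ to the $\mu^X$-orbits that the paper delegates to the preceding discussion.
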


\begin{proof}
Since $\mu^X$ preserves $\alpha$ and lifts $\mu^M$,
which is an Hamiltonian action with moment map $\Phi$,
$\beta$ is $\mu^X$-invariant. Furthermore,  
we see from (\ref{eqn:infinitesimal-lift}) and (\ref{eqn:tilde-alpha})
that $\beta(\xi_X)=-1$. 
\end{proof}

Thus $\mathcal{H}\subseteq TX'$ is the horizontal tangent space for
both $\pi$ and $\kappa$. If $V$ is a vector field
on $M$, we shall denote by $V^\sharp$ its horizontal lift to $X$
under $\pi$;
it is a $\nu^X$-invariant section of $\mathcal{H}$ on $X$.
Similarly, if $U$ is a vector field
on $N'$, we shall denote by $\widehat{U}$ its horizontal lift to
$X'$ under $\kappa$; it is a $\mu^X$-invariant section of $\mathcal{H}$
on $X'$. Clearly, vector fields on $M$ are the same as $\nu^X$-invariant
sections of $\mathcal{H}$ on $X$, and vector fields on $N'$ 
are the same as $\mu^X$-invariant
sections of $\mathcal{H}$ on $X$.

\begin{lem}
 \label{lem:kahler-form-N}
There exists a unique K\"{a}hler form $\eta$ on $N'$ such that
$d\beta=2\,\kappa^*(\eta)$.
\end{lem}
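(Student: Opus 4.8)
The plan is to produce $\eta$ by descent. We have just seen that $\beta=\alpha/\Phi$ is a connection form for $\kappa\colon X'\to N'$ whose horizontal distribution is $\mathcal{H}=\ker(\alpha)$, which is $\mu^X$-invariant. The first step is to observe that $d\beta$ is a basic $2$-form for $\kappa$: it is $\mu^X$-invariant (since $\beta$ is), and $\iota(\xi_X)\,d\beta=\mathcal{L}_{\xi_X}\beta-d\big(\iota(\xi_X)\beta\big)=0-d(-1)=0$, using $\beta(\xi_X)=-1$ from the preceding lemma and $\mathcal{L}_{\xi_X}\beta=0$. Hence $d\beta$ descends to a unique closed $2$-form $\eta$ on $N'$ with $\kappa^*(\eta)=\tfrac12\,d\beta$, i.e.\ $d\beta=2\,\kappa^*(\eta)$; uniqueness is immediate because $\kappa^*$ is injective on forms. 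Closedness of $\eta$ follows from $d(d\beta)=0$ and injectivity of $\kappa^*$.

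The second step is to check that $\eta$ is a K\"ahler form for the complex structure $I$ constructed in Proposition \ref{prop:I-is-integrable}; this has two sub-parts, compatibility/positivity and the type $(1,1)$ condition, but since $d\eta=0$ is already in hand, establishing that $\eta(\cdot,I\cdot)$ is a symmetric positive-definite tensor on $N'$ suffices. For this I would pull everything back to $\mathcal{H}\subseteq TX'$ and compare with the known K\"ahler data on $M$. On $\mathcal{H}$ we have, via $d\pi|_{\mathcal{H}}\colon\mathcal{H}\cong\pi^*(TM)$, the identifications of $J_{\mathcal{H}}$ with $J$ and of $\tfrac12\,d\alpha=\pi^*(\omega)$ with the pullback of $\omega$; and $\tfrac12\,d\beta$ restricted to $\mathcal{H}$ equals $\tfrac12\,d(\alpha/\Phi)|_{\mathcal H}=\tfrac{1}{\Phi}\cdot\tfrac12\,d\alpha|_{\mathcal H}$ because the $d(1/\Phi)\wedge\alpha$ term vanishes on $\mathcal{H}=\ker\alpha$. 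Thus on $\mathcal{H}$ we have $\kappa^*\eta|_{\mathcal H}=\tfrac{1}{\Phi}\,\pi^*\omega|_{\mathcal H}$, so the bilinear form $\kappa^*\big(\eta(\cdot,I\cdot)\big)$ on $\mathcal{H}$ equals $\tfrac{1}{\Phi}$ times the positive-definite form $\pi^*\big(\omega(\cdot,J\cdot)\big)$; since $\Phi>0$, this is symmetric positive-definite, and being $\mu^X$-basic it descends to a Riemannian metric on $N'$. Hence $\eta$ is a positive $(1,1)$-form with respect to $I$, i.e.\ a K\"ahler form.

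The main obstacle — really the only non-bookkeeping point — is making precise that the tensor $\eta(\cdot,I\cdot)$ on $N'$ is exactly captured by its horizontal lift to $\mathcal{H}$, i.e.\ that the chain ``descend via $\kappa$ / lift via $\kappa$'' is a genuine inverse pair on $\mu^X$-invariant basic tensors supported on $\mathcal{H}$, and that compatibility of $\eta$ with $I$ follows from compatibility of $\tfrac{1}{\Phi}\pi^*\omega$ with $J_{\mathcal H}$ through the isomorphisms $d\pi|_{\mathcal H}$ and $d\kappa|_{\mathcal H}$; this is where the definition of $I$ from Proposition \ref{prop:I-is-integrable} is used in an essential way. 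Everything else (the Cartan-formula computation that $d\beta$ is basic, closedness, uniqueness) is routine. I would also remark, for use later in the paper, that the induced metric on $N'$ is $\tfrac{1}{\Phi}$ times the metric push-forward of $\omega(\cdot,J\cdot)$ along $\pi$, and that real-analyticity of $\omega$ together with Lemma \ref{lem:real-analytic-moment-map} gives real-analyticity of $\beta$, hence of $\eta$ — though strictly this last point belongs to the discussion following the lemma rather than to its proof.
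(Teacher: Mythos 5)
Your proposal is correct and follows essentially the same route as the paper: descend the basic form $d\beta$ to $N'$ and then verify compatibility and positivity by identifying the restriction of $d\beta$ to $\mathcal{H}_x$ with $2\,\omega_m/\Phi(m)$ under the isomorphisms $(T_mM,J_m)\cong(\mathcal{H}_x,J_{\mathcal{H}_x})\cong(T_nN',I_n)$. Your use of Cartan's formula with $\beta(\xi_X)=-1$ to get $\iota(\xi_X)\,d\beta=0$ is just a cleaner phrasing of the paper's ``direct inspection'' step.
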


\begin{proof}
 We have
\begin{equation}
 \label{eqn:differential-alpha-tilde}
d\beta=\frac{1}{\Phi}\,d\alpha-\frac{1}{\Phi^2}\,d\Phi\wedge \alpha
=\frac{2}{\Phi}\,\pi^*(\omega)-\frac{1}{\Phi^2}\,d\Phi\wedge \alpha,
\end{equation}
and direct inspection using (\ref{eqn:infinitesimal-lift}) shows that
$\iota (\xi_X^\sharp)d\beta=0$.
Since $d\beta$ is $\mu^X$-invariant, it follows that there exists
a necessarily unique 
2-form $\eta$ on $N'$ such that $d\beta=\kappa^*(2\,\eta)$.

Thus, $\eta$ is a closed 2-form on $N'$. To see that it is in fact a K\"{a}hler form,
we need to check that it is compatible with the complex structure and non-degenerate.
To this end, we fix an arbitrary $n\in M'$, choose an arbitrary $x\in \kappa^{-1}(n)$,
and set $m=\pi(x)$. Our construction then yields natural complex-linear isomorphisms
$(T_mM,J_m)\cong (\mathcal{H}_x,J_{\mathcal{H}_x})\cong (T_nN',I_n)$. 
To see that $\eta_n$ is non-degenerate on
$T_nN'$ and compatible with $I_n$, it then suffices to see that the restriction of
$d\beta$ is non-degenerate on $\mathcal{H}_x$, and compatible with
$J_{\mathcal{H}_x}$.

By (\ref{eqn:differential-alpha-tilde}), under the complex-linear isomorphism
$(T_mM,J_m)\cong (\mathcal{H}_x,J_{\mathcal{H}_x})$
the restriction of $d\beta$ on
$\mathcal{H}_x$ may be identified with $2\omega_m/\Phi(m)$ on $T_mM$. Since $\omega$
is K\"{a}hler on $(M,J)$, it is non-degenerate on $T_mM$ and compatible with $J_m$,
and this completes the proof.
\end{proof}

Suppose $f\in \mathcal{C}^\infty(M')^{\mu}\cong \mathcal{C}^\infty(N')^{\nu}$,
and let $H_f$ its Hamiltonian vector field 
on $(M',2\,\omega)$. Since $f$ is $\mu^M$-invariant, so is
$H_f$. Let $H_f^\sharp$  be the horizontal lift
of $H_f$ to $X'$. Then $H_f^\sharp$ is a
$\mu^X\times \nu^X$-invariant horizontal vector field on $X'$, and therefore
it descends to a $\nu^N$-invariant vector field $\overline{H}_f$, 
respectively.

\begin{lem}
 Let $K_f$ be the Hamiltonian vector field of 
$f\in \mathcal{C}^\infty(M')^{\mu} \cong \mathcal{C}^\infty(N')^{\nu}$
on $(N,2\,\eta)$. Then $K_f=\Phi\,\overline{H}_f$.
\end{lem}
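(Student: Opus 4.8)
The plan is to work on the circle bundle $X'$ and exploit the two ways of viewing $\mathcal H$ as a horizontal distribution, using the identity $\beta = \alpha/\Phi$ together with the curvature relations $d\alpha = 2\pi^*(\omega)$ and $d\beta = 2\kappa^*(\eta)$. First I would recall that $f\in\mathcal C^\infty(M')^\mu$ gives rise to two Hamiltonian vector fields: $H_f$ on $(M',2\omega)$, defined by $\iota(H_f)(2\omega) = -df$, and $K_f$ on $(N',2\eta)$, defined by $\iota(K_f)(2\eta) = -df$ (with $f$ regarded as a $\nu^N$-invariant function on $N'$). Because $H_f$ is $\mu^M$-invariant and horizontal-liftable, its horizontal lift $H_f^\sharp$ is a $\mu^X\times\nu^X$-invariant section of $\mathcal H$, hence descends to the vector field $\overline H_f$ on $N'$; likewise $K_f$ lifts to a $\mu^X$-invariant horizontal field $\widehat{K_f}$ on $X'$. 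Since both $\overline H_f$ and $K_f$ live in $TN'$, it suffices to compare $\Phi\,\overline H_f$ and $K_f$ after horizontal lift to $X'$, i.e.\ to show $\Phi\,H_f^\sharp = \widehat{K_f}$ as sections of $\mathcal H$ over $X'$.

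Next I would pull back the two defining equations to $X'$ along the horizontal lifts. Both $H_f^\sharp$ and $\widehat{K_f}$ annihilate $\alpha$ (equivalently $\beta$), so I only need to test them against horizontal tangent vectors. For any horizontal $W\in\mathcal H$, using $d\alpha = 2\pi^*(\omega)$ on the horizontal distribution I get $2\omega(H_f, d\pi(W)) = d\alpha(H_f^\sharp, W)$, and since $H_f^\sharp$ is horizontal the extra terms in the Cartan formula involving $\alpha(H_f^\sharp)$ drop out, giving $d\alpha(H_f^\sharp, W) = -df(W)$ (lifting $f$ to $X'$). Similarly, testing $\widehat{K_f}$ against $W$ via $d\beta = 2\kappa^*(\eta)$ gives $d\beta(\widehat{K_f}, W) = -df(W)$. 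Now substitute $d\beta = \frac1\Phi d\alpha - \frac1{\Phi^2}\,d\Phi\wedge\alpha$ from \eqref{eqn:differential-alpha-tilde}: evaluated on the pair $(\widehat{K_f}, W)$ of horizontal vectors, the term $d\Phi\wedge\alpha$ vanishes because $\alpha$ kills both arguments, so $d\beta(\widehat{K_f},W) = \frac1\Phi\,d\alpha(\widehat{K_f},W)$. Combining, $\frac1\Phi\,d\alpha(\widehat{K_f},W) = -df(W) = d\alpha(H_f^\sharp,W)$ for all horizontal $W$, i.e.\ $d\alpha\big(\widehat{K_f} - \Phi\,H_f^\sharp,\,W\big) = 0$ for all $W\in\mathcal H$.

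Finally I would invoke nondegeneracy: by the proof of Lemma \ref{lem:kahler-form-N}, the restriction of $d\alpha = 2\pi^*(\omega)$ to $\mathcal H_x$ is identified with $2\omega_m$ on $T_mM$, which is nondegenerate. Since $\widehat{K_f} - \Phi\,H_f^\sharp$ is itself horizontal, the vanishing of $d\alpha(\widehat{K_f}-\Phi H_f^\sharp,\,\cdot)$ on all of $\mathcal H$ forces $\widehat{K_f} = \Phi\,H_f^\sharp$ pointwise on $X'$. Descending to $N'$ yields $K_f = \Phi\,\overline H_f$, where $\Phi$ is understood as the induced function on $N'$. The only point requiring a little care — and the main (though minor) obstacle — is bookkeeping the several identifications: that $f$ lifted to $X'$ has $d f = \pi^*(df_M) = \kappa^*(df_N)$ on $\mathcal H$, and that ``$\Phi$ on $N'$'' is exactly $\pi^*\Phi$ pushed down, so that multiplying the horizontal lift by $\Phi$ and then descending is consistent. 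Everything else is the Cartan-formula computation above, which is routine.
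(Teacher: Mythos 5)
Your proof is correct and follows essentially the same route as the paper: both arguments pull everything back to the common horizontal distribution $\mathcal{H}$ on $X'$, use $d\alpha=2\,\pi^*(\omega)$, $d\beta=2\,\kappa^*(\eta)$ and the fact that $d\beta=\tfrac1\Phi\,d\alpha$ on pairs of horizontal vectors, together with $\pi^*(df_M)=\kappa^*(df_N)$ on $\mathcal{H}$, and conclude by nondegeneracy (the paper phrases this as directly verifying that $\Phi\,\overline{H}_f$ satisfies the Hamiltonian equation for $f$ on $(N',2\,\eta)$, while you compare the two horizontal lifts via nondegeneracy of $d\alpha|_{\mathcal{H}}$ — the same computation). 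The opposite sign convention you adopt for Hamiltonian vector fields is applied consistently on both $M$ and $N$, so it does not affect the conclusion.
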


\begin{proof}
We need to show that for any $n\in N'$ and $\mathbf{u}\in T_nN'$
we have 
\begin{equation}
 \label{eqn:equality-hamiltonian-vector-field}
2\,\Phi(n)\cdot \eta_n\big(\overline{H}_f(n),\mathbf{u}\big)=d^N_nf(\mathbf{u}),
\end{equation}
where $d^Nf$ is the differential of $f$ when $f$ is viewed as a function
on $N$.

Choose as before $x\in \kappa^{-1}(n)$ and
let $m=:\pi(x)\in M'$. Let $\widehat{\mathbf{u}}\in \mathcal{H}_x$ be the horizontal
lift of $\mathbf{u}$ under $\kappa$, and set
$\mathbf{v}=d_x\pi(\widehat{\mathbf{u}})$. Thus
$\widehat{\mathbf{u}}=\mathbf{v}^\sharp$.
Since $f$ is invariant, up on $X$ we have
$f\circ \pi=f\circ \kappa$; thus, 
\begin{equation}
 \label{eqn:horizontal-vertical-differential}
d^M_mf(\mathbf{v})=d^X_xf\left(\mathbf{v}^\sharp\right)=
d^X_xf\left(\widehat{\mathbf{u}}\right)=
d^N_nf(\mathbf{u}).
\end{equation}

On the other hand, since $H_f^\sharp=\widehat{\overline{H}_f}$, we have
\begin{eqnarray}
 \label{eqn:hamiltonian-part}
\lefteqn{2\,\Phi(n)\cdot \eta_n\left(\overline{H}_f(n),\mathbf{u}\right)}\\
&=&\Phi(m)\cdot d_x\beta\left(H_f(m)^\sharp,\mathbf{v}^\sharp\right)
=\Phi(m)\cdot \frac{1}{\Phi(m)}\,d_x\alpha\left(H_f(m)^\sharp,\mathbf{v}^\sharp\right)\nonumber\\
&=&2\,\omega_m\big(H_f(m),\mathbf{v}\big)=d_m^Mf(\mathbf{v}).\nonumber
\end{eqnarray}
(\ref{eqn:equality-hamiltonian-vector-field}) follows from (\ref{eqn:horizontal-vertical-differential}) and
(\ref{eqn:hamiltonian-part}).

\end{proof}

Suppose $f,g\in \mathcal{C}^\infty(M)^{\mu}$. Since
$\mathcal{C}^\infty(M')^{\mu} \cong \mathcal{C}^\infty(N')^{\nu}$, we
have Poisson brackets $\{f,g\}_M\in \mathcal{C}^\infty(M')^{\mu}$
and $\{f,g\}_N\in \mathcal{C}^\infty(N')^{\nu}$ on $(M',2\,\omega)$
and $(N',2\,\eta)$, respectively. The relation between them under the previous
isomorphism is as follows.

\begin{cor}
 \label{cor:poisson-brackets-MN}
For any $f,\,g\in \mathcal{C}^\infty(M')^{\mu} \cong \mathcal{C}^\infty(N')^{\nu}$,
we have
$\{f,g\}_N=\Phi\,\{f,g\}_M$.
\end{cor}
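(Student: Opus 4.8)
The plan is to reduce the identity $\{f,g\}_N = \Phi\,\{f,g\}_M$ to the preceding Lemma relating the Hamiltonian vector fields on $(M',2\,\omega)$ and $(N',2\,\eta)$, together with the fact (established in the proof of that Lemma) that the differentials of $f$ along horizontal directions agree on $M'$ and $N'$ once one passes through $X'$. Concretely, I would fix $n\in N'$, choose $x\in\kappa^{-1}(n)$, and set $m=\pi(x)\in M'$. By Corollary in the excerpt, $\{f,g\}_N = 2\,\eta_n\!\big(K_f(n),K_g(n)\big)$ (using the convention $\{\cdot,\cdot\}_P=\gamma(H_\cdot,H_\cdot)$ applied to $\gamma=2\,\eta$), and similarly $\{f,g\}_M = 2\,\omega_m\!\big(H_f(m),H_g(m)\big)$.

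First I would invoke the Lemma $K_f=\Phi\,\overline{H}_f$ (and likewise $K_g=\Phi\,\overline{H}_g$), so that
\[
\{f,g\}_N(n) = 2\,\eta_n\!\big(\Phi(n)\,\overline{H}_f(n),\,\Phi(n)\,\overline{H}_g(n)\big) = \Phi(n)^2\cdot 2\,\eta_n\!\big(\overline{H}_f(n),\overline{H}_g(n)\big).
\]
Next I would translate the $\eta$-pairing into an $\alpha$-pairing upstairs on $X'$: lifting $\overline{H}_f(n),\overline{H}_g(n)$ to their horizontal lifts $H_f(m)^\sharp=\widehat{\overline{H}_f}$ and $H_g(m)^\sharp=\widehat{\overline{H}_g}$ in $\mathcal{H}_x$, and using $d\beta=2\,\kappa^*(\eta)$ together with $\beta=\alpha/\Phi$, one gets
\[
2\,\eta_n\!\big(\overline{H}_f(n),\overline{H}_g(n)\big) = d_x\beta\!\big(H_f(m)^\sharp,H_g(m)^\sharp\big) = \frac{1}{\Phi(m)}\,d_x\alpha\!\big(H_f(m)^\sharp,H_g(m)^\sharp\big) = \frac{2}{\Phi(m)}\,\omega_m\!\big(H_f(m),H_g(m)\big),
\]
where the last equality is $d\alpha=2\,\pi^*(\omega)$ restricted to horizontal vectors. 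Combining, $\{f,g\}_N(n) = \Phi(m)^2\cdot \frac{1}{\Phi(m)}\cdot 2\,\omega_m\!\big(H_f(m),H_g(m)\big) = \Phi(m)\cdot\{f,g\}_M(m)$, which is the claim once we recall $\Phi$ is identified across $M'$, $X'$, $N'$.

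The only point requiring a little care — and the spot I would call the main obstacle, though it is a mild one — is the bookkeeping of the factors of $2$ and of $\Phi$ coming from the two different conventions: the symplectic forms in play are $2\,\omega$ on $M$ and $2\,\eta$ on $N$, while the connection-theoretic normalizations are $d\alpha=2\,\pi^*(\omega)$ and $d\beta=2\,\kappa^*(\eta)$ with $\beta=\alpha/\Phi$. One must be consistent about whether the Hamiltonian vector field $H_f$ is taken with respect to $\omega$ or $2\,\omega$; I would take it with respect to $2\,\omega$ on $M'$ and with respect to $2\,\eta$ on $N'$, so that the $\Phi^2$ from the Lemma is reduced to $\Phi^1$ by exactly one factor of $\Phi^{-1}$ contributed by $\beta=\alpha/\Phi$. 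No genuine geometric difficulty arises: $\mu^M$-invariance of $f$ and $g$ guarantees that $H_f,H_g$ are $\mu^M$-invariant and hence descend, so every object written above is well-defined, and the computation is purely a matter of chasing the identification $(T_mM,\omega_m)\cong(\mathcal{H}_x,d_x\alpha)\cong(T_nN',2\,\eta_n/\Phi)$ through.
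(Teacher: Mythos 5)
Your proposal is correct and follows essentially the same route as the paper: invoke the lemma $K_f=\Phi\,\overline{H}_f$, pass to horizontal lifts, and chase the factors through $d\beta=2\,\kappa^*(\eta)$, $\beta=\alpha/\Phi$, and $d\alpha=2\,\pi^*(\omega)$ (noting, as you implicitly do, that the term $\Phi^{-2}\,d\Phi\wedge\alpha$ in $d\beta$ vanishes on horizontal vectors). Your bookkeeping of the factors of $2$ and $\Phi$ matches the paper's computation exactly.
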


\begin{proof}
We have, omitting symbols of pull-back,
\begin{eqnarray}
 \label{eqn:poisson-bracket-1st-step}
\{f,g\}_N&=&2\,\eta \big(K_f,K_g\big)=\Phi^2\,d\beta\left(H_f^\sharp.H_g^\sharp\right)\nonumber\\
&=&\Phi^2\,\frac 1\Phi\,d\alpha\left(H_f^\sharp.H_g^\sharp\right)=\Phi\cdot 2\,\omega(H_f,H_g)=
\Phi\cdot \{f,g\}_M.
\end{eqnarray}
\end{proof}

We can similarly relate the gradients $\mathrm{grad}_M(f)$
and $\mathrm{grad}_N(f)$ of an invariant
$f$ on $(M',2\,g)$ and $(N',2\,h)$,
where $g(\cdot,\cdot)=\omega\big(\cdot,J(\cdot)\big)$ and
$h=\eta\big(\cdot,I(\cdot)\big)$ are the Riemannian metrics on $M$ and $N$, respectively.
We have
\begin{equation}
 \label{eqn:gradient-MN}
\mathrm{grad}_N(f)=I\big(K_f\big)=\Phi\,I\left(\overline{H}_f\right)=\Phi\,
\overline{J(H_f)}=\Phi\,\overline{\mathrm{grad}_M(f)}.
\end{equation}

Passing to square norms, we get
\begin{eqnarray}
 \label{eqn:square-norm-gradient-MN}
\big\|\mathrm{grad}_N(f)\|_N^2&=&2\,h\big(\mathrm{grad}_N(f),\mathrm{grad}_N(f)\big)=
2\,\eta\Big(\mathrm{grad}_N(f),I\big(\mathrm{grad}_N(f)\big)\Big)\nonumber\\
&=&2\,\Phi^2\,\eta\left(\overline{\mathrm{grad}_M(f)},\overline{J\big(\mathrm{grad}_M(f)}\big)\right)\nonumber\\
&=&2\,\Phi^2\,d\beta\left(\mathrm{grad}_M(f)^\sharp,J\big(\mathrm{grad}_M(f)^\sharp\right)\nonumber\\
&=&2\,\Phi\,d\alpha\left(\mathrm{grad}_M(f)^\sharp,J\big(\mathrm{grad}_M(f)^\sharp\right)\nonumber\\
&=&\Phi\cdot 2\,\omega\big(\mathrm{grad}_M(f),J\big(\mathrm{grad}_M(f)\big)=\Phi\,\big\|\mathrm{grad}_M(f)\|_M^2.
\end{eqnarray}

\subsection{The descended action on $N$}

Let us dwell on the Hamiltonian nature of the descended action
$\nu^N$. Recall that the action $\nu^X$ given by
(\ref{eqn:action-S-X}), that is, scalar multiplication composed with inversion,
commutes with $\mu^X$, hence it descends to an action
$\nu^N:\mathbf{T}^1\times N\rightarrow N$. 

\begin{lem}
 $\nu^N$ is an holomorphic action
on $(N',I)$.
\label{lem:holomorphic-action-N}
\end{lem}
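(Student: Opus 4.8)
The plan is to verify that the descended action $\nu^N$ preserves the complex structure $I$ by lifting everything to $X'$ and using the fact that the corresponding statement is already known on $M$. Recall that $\nu^X$ and $\mu^X$ commute, so $\nu^X$ preserves the $\mu^X$-orbits and thus descends to $N' = X'/\mathbf{T}^1$; what must be shown is that $d\nu^N$ commutes with $I$. Since $I$ on $N'$ was defined via the chain of complex-linear isomorphisms $(T_mM, J_m) \cong (\mathcal{H}_x, J_{\mathcal{H}_x}) \cong (T_nN', I_n)$ for $x \in \kappa^{-1}(n)$, $m = \pi(x)$, it is natural to check the invariance at the level of the horizontal distribution $\mathcal{H}$ and the complex structure $J_{\mathcal{H}}$.

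The key steps are as follows. First, I would observe that $\nu^X$ preserves $\mathcal{H} = \ker(\alpha)$: indeed $\nu^X$ is, up to inversion, fibrewise scalar multiplication, which preserves $\alpha$, so it preserves $\ker(\alpha)$. Next, $\nu^X$ preserves $J_{\mathcal{H}}$: since $\nu^X$ covers the trivial action on $M$ (it only moves points within the fibres of $\pi$), and $J_{\mathcal{H}}$ is the pullback of $J$ under the isomorphism $d\pi|_{\mathcal{H}}: \mathcal{H} \cong \pi^*(TM)$, the action $\nu^X$ on $\mathcal{H}$ is compatible with $J_{\mathcal{H}}$ — concretely, $d\nu^X_{g}$ acts on $\mathcal{H}_x$ as the identity modulo the identification with $T_{\pi(x)}M$, hence commutes with $J_{\mathcal{H}}$. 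Then I would transport this to $N'$: the descended map $d\nu^N_g : T_nN' \to T_{\nu^N_g(n)}N'$ corresponds, under $d\kappa|_{\mathcal{H}}$, to the restriction of $d\nu^X_g$ to the appropriate horizontal spaces, and since the latter commutes with $J_{\mathcal{H}}$ while $I$ is defined as the descent of $J_{\mathcal{H}}$, we get $d\nu^N_g \circ I = I \circ d\nu^N_g$. Finally, since $\nu^N$ is a smooth action of $\mathbf{T}^1$ preserving the integrable complex structure $I$ (integrability being Proposition \ref{prop:I-is-integrable}), it is holomorphic.

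The main obstacle — really the only subtle point — is the bookkeeping in step three: one must check that $\nu^X$ preserving $\mathcal{H}$ and $J_{\mathcal{H}}$ genuinely descends to $\nu^N$ preserving $I$, which requires that the horizontal lift construction $\kappa^*$ intertwines $\nu^N$ and (the restriction to $\mathcal{H}$ of) $\nu^X$. This follows because $\mathcal{H}$ is simultaneously the horizontal distribution for both $\pi$ and $\kappa$ (established in the lemma preceding Lemma \ref{lem:kahler-form-N}), $\nu^X$ is $\mu^X$-invariant by commutativity, and a $\mu^X$-invariant horizontal vector field on $X'$ is precisely the $\kappa$-horizontal lift of a vector field on $N'$; applying $d\nu^X_g$ to such a field yields another $\mu^X$-invariant horizontal field, which is the lift of $d\nu^N_g$ applied to the original. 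Once this identification is in place, the compatibility with $I$ is immediate from the compatibility with $J_{\mathcal{H}}$, and holomorphicity follows from the standard fact that a diffeomorphism preserving an integrable almost complex structure is biholomorphic.
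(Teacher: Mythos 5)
Your proof is correct and follows essentially the same route as the paper: both arguments exploit the chain of complex-linear identifications $(T_mM,J_m)\cong(\mathcal{H}_x,J_{\mathcal{H}_x})\cong(T_nN',I_n)$ and the observation that $d\nu^X_t$ on $\mathcal{H}_x$ corresponds to the identity of $T_mM$ since $\nu^X$ covers the trivial action on $M$, hence $d\nu^N_t$ is $I$-linear. The paper's version is simply a more compressed statement of the same bookkeeping you spell out.
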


\begin{proof}
 Choose $n\in N'$ and $x\in \kappa^{-1}(n)$, and let $m=:\pi(x)$. 
Fix $t=e^{i\theta}\in \mathbf{T}^1$.
By construction, we have complex-linear isomorphisms
$T_nN'\cong \mathcal{H}_x\cong T_mM$ that inter-wine
$d_n\nu^N_t:T_nN'\rightarrow T_{\nu^N_t(n)}N'$ with
$d_x\nu^X_t:\mathcal{H}_x\rightarrow \mathcal{H}_{\nu^X_t(x)}=\mathcal{H}_{e^{-i\theta}\cdot x}$,
hence with the identity map of $T_mM$. The statement follows.

\end{proof}

\begin{lem}
 $\nu^N$ is a symplectic action
on $(N',\eta)$.
\label{lem:symplectic-action-N}
\end{lem}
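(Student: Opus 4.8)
The plan is to show that $\nu^N$ preserves $\eta$ by exploiting that $\nu^X$ preserves $\alpha$ up to the rescaling factor $\Phi$, and that $\nu^X$ commutes with $\mu^X$. First I would recall that $\eta$ is characterized by $d\beta = 2\,\kappa^*(\eta)$ where $\beta = \alpha/\Phi$, and that $\Phi$ is $\nu^X$-invariant (indeed $\Phi$ descends from $\mu^M$, so $\Phi\circ\nu^X_t = \Phi$ for all $t$). Hence it suffices to understand how $\nu^X$ acts on $\beta$, or equivalently on $\alpha$: since $\nu^X$ is fiberwise scalar multiplication composed with inversion, and $\alpha$ is a connection form invariant under scalar multiplication (being the connection 1-form attached to $\nabla_A$), we have $\big(\nu^X_t\big)^*\alpha = \alpha$ for every $t\in\mathbf{T}^1$. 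Combined with $\nu^X$-invariance of $\Phi$, this gives $\big(\nu^X_t\big)^*\beta = \beta$.

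Next I would push this down to $N'$. Because $\nu^X$ and $\mu^X$ commute, $\nu^X_t$ maps $\mu^X$-orbits to $\mu^X$-orbits and so induces $\nu^N_t$ on $N'$ with $\nu^N_t\circ\kappa = \kappa\circ\nu^X_t$. Pulling back $d\beta = 2\,\kappa^*(\eta)$ by $\nu^X_t$ and using $\big(\nu^X_t\big)^*\beta = \beta$ yields
\begin{equation*}
2\,\kappa^*\big((\nu^N_t)^*\eta\big) = (\nu^X_t)^*\kappa^*(2\,\eta) = (\nu^X_t)^*(d\beta) = d\beta = 2\,\kappa^*(\eta).
\end{equation*}
Since $\kappa^*$ is injective on 2-forms on $N'$ (as $\kappa$ is a submersion), this forces $(\nu^N_t)^*\eta = \eta$, i.e. $\nu^N$ is symplectic on $(N',\eta)$.

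The main point requiring a little care, and the step I would expect to be the only genuine obstacle, is the assertion $\big(\nu^X_t\big)^*\alpha = \alpha$: one must check that $\alpha$, as the globally defined connection contact form on $X$ with $d\alpha = 2\,\pi^*(\omega)$, is genuinely invariant under the action $\mathrm{mult}$ of $\mathbf{T}^1$ by fiberwise scalar multiplication (and a fortiori under $\nu^X$, which only differs by the inversion $e^{i\theta}\mapsto e^{-i\theta}$, a group automorphism). This is standard — the horizontal distribution $\ker(\alpha)$ is intrinsic to $\nabla_A$ and hence rotation-invariant, and $\alpha(\partial/\partial\theta) = -1$ is preserved since $\partial/\partial\theta$ is the generator of $\mathrm{mult}$ itself — but it is worth stating explicitly, perhaps as a one-line remark, since everything else reduces to formal manipulation. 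An alternative, essentially equivalent route is infinitesimal: let $\zeta_X$ be the generator of $\nu^X$ (so $\zeta_X = \partial/\partial\theta$ up to sign by \eqref{eqn:action-S-X}) and compute $\mathcal{L}_{\zeta_X}\beta$ via Cartan's formula, $\mathcal{L}_{\zeta_X}\beta = d\big(\iota_{\zeta_X}\beta\big) + \iota_{\zeta_X}d\beta$; one has $\iota_{\zeta_X}\beta = \alpha(\partial/\partial\theta)/\Phi = -1/\Phi$ up to the invariance of $\Phi$, and $\iota_{\zeta_X}d\beta$ can be evaluated from \eqref{eqn:differential-alpha-tilde}, showing the two contributions cancel; then $\mathcal{L}_{\zeta_X}(d\beta) = 0$, which descends to $\mathcal{L}_{\zeta_N}\eta = 0$ by $\kappa$-relatedness, giving the claim.
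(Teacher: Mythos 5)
Your proof is correct, but it takes a different route from the paper's. The paper disposes of this lemma in one line by reusing the pointwise setup of Lemma \ref{lem:holomorphic-action-N}: for fixed $x\in\kappa^{-1}(n)$ with $m=\pi(x)$, the complex-linear isomorphisms $T_nN'\cong\mathcal{H}_x\cong T_mM$ intertwine $d_n\nu^N_t$ with the identity of $T_mM$, and by (\ref{eqn:differential-alpha-tilde}) they carry $\eta_n$ to $\omega_m/\Phi(m)$, which is manifestly unchanged; invariance of $\eta$ is then immediate. You instead argue globally: $(\nu^X_t)^*\alpha=\alpha$ (scalar multiplication preserves the connection form) and $\Phi\circ\nu^X_t=\Phi$ give $(\nu^X_t)^*\beta=\beta$, hence $(\nu^X_t)^*d\beta=d\beta$, and since $\kappa\circ\nu^X_t=\nu^N_t\circ\kappa$ and $\kappa^*$ is injective (surjective submersion), $(\nu^N_t)^*\eta=\eta$. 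Both arguments are sound; the paper's is shorter given the machinery already in place, while yours is self-contained and has the side benefit of establishing $\nu^X$-invariance of $\beta$ itself, a fact the paper later uses (when asserting that $\nu^X$ is a contact lift of $\nu^N$ to $(X',\beta)$) without separate proof. One small slip in your aside: with the paper's conventions one has $\alpha(\partial/\partial\theta)=+1$, not $-1$ (this is what makes $\beta(\xi_X)=-1$ in view of (\ref{eqn:infinitesimal-lift})); the sign is immaterial to your main argument, which only needs that $\alpha(\partial/\partial\theta)$ is constant and $\ker(\alpha)$ is rotation-invariant.
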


\begin{proof}
 This follows as for Lemma \ref{lem:holomorphic-action-N}, since 
in view of (\ref{eqn:differential-alpha-tilde})
under the previous
isomorphism $\eta_n$ corresponds to $\omega_m/\Phi(m)$.

\end{proof}

Thus $\nu^N_t$ is an automorphism of the K\"{a}hler manifold $(N',I,\eta)$, for each $t\in \mathbf{T}^1$.

\begin{lem}
\label{lem:hamiltonian-action-N-inverse}
 $\nu^N$ is an Hamiltonian action on $(N',2\,\eta)$, with moment map $1/\Phi$ (viewed as a function on $N$).
\end{lem}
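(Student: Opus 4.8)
The plan is to verify that the function $1/\Phi$ on $N'$ (via the standing isomorphism $\mathcal{C}^\infty(M')^\mu \cong \mathcal{C}^\infty(N')^\nu$, with $\Phi$ already positive) is a moment map for $\nu^N$ with respect to the symplectic form $2\,\eta$. By Lemmas \ref{lem:holomorphic-action-N}, \ref{lem:symplectic-action-N} the action $\nu^N$ is holomorphic and symplectic, so it only remains to exhibit the Hamiltonian. Write $\zeta_N \in \mathfrak{X}(N')$ for the infinitesimal generator of $\nu^N$; the key identity to establish is
\begin{equation*}
\iota(\zeta_N)\,(2\,\eta) = d\!\left(\frac 1\Phi\right)\quad\text{on } N'.
\end{equation*}
Since everything in sight is $\mu^X$-invariant and $\kappa$ is a submersion, it suffices to check the pulled-back identity $\kappa^*\big(\iota(\zeta_N)(2\,\eta)\big) = \kappa^*\,d(1/\Phi)$ on $X'$, and then use that $\eta$ and $1/\Phi$ are honestly defined on $N'$.

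First I would compute $\kappa^*(2\,\eta) = d\beta$ using Lemma \ref{lem:kahler-form-N}, together with the explicit formula \eqref{eqn:differential-alpha-tilde}, namely $d\beta = (2/\Phi)\,\pi^*(\omega) - (1/\Phi^2)\,d\Phi\wedge\alpha$. Next I would identify the generator: $\zeta_N$ is the descent under $\kappa$ of the generator of $\nu^X$, which by \eqref{eqn:action-S-X} is $-\partial/\partial\theta$ (scalar multiplication composed with inversion). So the horizontal lift of $\zeta_N$ is the $\mathcal{H}$-component of $-\partial/\partial\theta$; equivalently I can contract $d\beta$ against $-\partial/\partial\theta$ directly, since $d\beta$ annihilates the $\kappa$-vertical direction $\xi_X$ (as shown in the proof of Lemma \ref{lem:kahler-form-N}, $\iota(\xi_X)\,d\beta = 0$), so contracting with $-\partial/\partial\theta$ or with its $\mathcal{H}$-part gives the same answer. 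Then
\begin{equation*}
\iota\!\left(-\frac{\partial}{\partial\theta}\right) d\beta
= -\frac{2}{\Phi}\,\iota\!\left(\frac{\partial}{\partial\theta}\right)\pi^*(\omega)
+ \frac{1}{\Phi^2}\,\iota\!\left(\frac{\partial}{\partial\theta}\right)\big(d\Phi\wedge\alpha\big).
\end{equation*}
Here $\iota(\partial/\partial\theta)\,\pi^*(\omega) = 0$ because $\partial/\partial\theta$ is $\pi$-vertical, while $\alpha(\partial/\partial\theta) = 1$ and $\iota(\partial/\partial\theta)\,d\Phi = 0$ (as $\Phi$ is pulled back from $M$), so $\iota(\partial/\partial\theta)(d\Phi\wedge\alpha) = -d\Phi$. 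Hence the contraction equals $-(1/\Phi^2)\,d\Phi = d(1/\Phi)$, which is exactly $\kappa^*$ of the desired moment map relation.

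The only genuinely delicate point is the bookkeeping around which vector field to contract with: $-\partial/\partial\theta$ is a vector field on $X'$ that is not $\kappa$-horizontal, so a priori $\iota(-\partial/\partial\theta)\,d\beta$ need not be a basic form. I expect the main obstacle to be cleanly arguing that it is basic and equals $\kappa^*\big(\iota(\zeta_N)(2\eta)\big)$ — this is where one must invoke $\iota(\xi_X)\,d\beta = 0$ to replace $-\partial/\partial\theta$ by its horizontal part $\widehat{\zeta_N}$ without changing the contraction, together with $\mu^X$-invariance of $d\beta$ to descend. Everything else is the short computation above. Once the identity $\iota(\zeta_N)(2\eta) = d(1/\Phi)$ is in hand on $N'$, combined with the already-established holomorphicity and symplecticity, the Hamiltonian nature of $\nu^N$ with moment map $1/\Phi$ follows, and one notes in passing that this is consistent with \eqref{eqn:infinitesimal-lift}: $\nu^X$ is precisely the contact lift of $\nu^N$ to $(X',\beta)$ generated by the Hamiltonian $1/\Phi$, since $\zeta_X = \widehat{\zeta_N} - (1/\Phi)\,\xi_X$ by $\beta(\xi_X) = -1$.
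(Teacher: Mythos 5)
Your proof is correct and takes essentially the same route as the paper's: both arguments pull the desired identity back to $X'$ via $\kappa^*(2\,\eta)=d\beta$, use the explicit formula (\ref{eqn:differential-alpha-tilde}) for $d\beta$, and contract with $-\partial/\partial\theta$, which is $\kappa$-related to the generator of $\nu^N$ (the paper simply evaluates pointwise against horizontal lifts $\mathbf{v}^\sharp$ rather than writing the interior-product computation, and your basic-form worry is indeed harmless for exactly the reason you give). Only the closing aside has a sign slip: since $\beta(\xi_X)=-1$, the vertical generator with $\beta$-value $+1$ is $-\xi_X$, so the contact-lift identity reads $\zeta_X=\widehat{\zeta_N}+(1/\Phi)\,\xi_X$ rather than $\widehat{\zeta_N}-(1/\Phi)\,\xi_X$; this remark is not used in the proof, so nothing else is affected.
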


\begin{proof}
The vector field $-\partial/\partial \theta$ on $X$ is $\mu^X$-invariant, hence it
descends to a vector field $\upsilon$ on $N$, which is the infinitesimal generator
of $\nu^X$. We need to show that $2\,\iota(\upsilon)\,\eta=d^N(1/\Phi)$, that is,
for any $n\in N'$ and any $\mathbf{u}\in T_nN'$
we have
\begin{equation}
 \label{eqn:hamiltonian-descended}
2\,\eta_n(\upsilon,\mathbf{u})=-\Phi(n)^{-2}\,d^N_n\Phi(\mathbf{u}).
\end{equation}
 
As before, let $\widehat{\mathbf{u}}$ be the horizontal lift of $\mathbf{u}$ with respect to $\kappa$,
and set $\mathbf{v}=:d_x\pi\left(\widehat{\mathbf{u}}\right)$,
so that $\widehat{\mathbf{u}}=\mathbf{v}^\sharp$.
Thus
\begin{eqnarray}
 \label{eqn:hamiltonian-descended-0}
-\Phi(n)^{-2}\,d^N_n\Phi(\mathbf{u})&=&
-\Phi(x)^{-2}\,d^X_x\Phi\left(\widehat{\mathbf{u}}\right)\\
&=&-\Phi(x)^{-2}\,d^X_x\Phi\left(\mathbf{v}^\sharp\right)=-\Phi(m)^{-2}\,d^M_m\Phi\left(\mathbf{v}\right).\nonumber
\end{eqnarray}

On the other hand, since
$\kappa^*(2\,\eta)=d\beta$, we have
\begin{eqnarray}
 \label{eqn:hamiltonian-descended-1}
d_x\beta\left(-\dfrac{\partial}{\partial \theta},\mathbf{v}^\sharp\right)=
d_x\beta\left(-\dfrac{\partial}{\partial \theta},\widehat{\mathbf{u}}\right)=2\,\eta_n(\upsilon,\mathbf{u}).
\end{eqnarray}

Then (\ref{eqn:hamiltonian-descended}) is equivalent to the equality
\begin{eqnarray}
 \label{eqn:hamiltonian-descended-2}
d_x\beta\left(\dfrac{\partial}{\partial \theta},\mathbf{v}^\sharp\right)=
\dfrac{1}{\Phi(m)^2}\,d^M_m\Phi\left(\mathbf{v}\right),
\end{eqnarray}
for any $m\in M$, $\mathbf{v}\in T_mM$, and $x\in \pi^{-1}(x)$. The latter is an immediate consequence 
of (\ref{eqn:differential-alpha-tilde}).
\end{proof}

Now $\beta$ is a connection 1-form for the circle bundle
$\kappa:X'\rightarrow N'$ and is preserved by $\nu^X$; therefore, for an appropriate
constant $c$, $\nu^X$ is a contact lift
to $\left(X',\beta\right)$ of $\nu^N$, with respect to
the Hamiltonian $c+1/\Phi$.

\begin{lem}
 \label{lem:c=0}
The correct choice is $c=0$. Furthermore, the horizontal lift
of $\upsilon$ with respect to $\kappa$ is
$$
\widehat{\upsilon}=-\frac{1}{\Phi}\,\xi_M^\sharp.
$$
\end{lem}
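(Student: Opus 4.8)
The plan is to pin down the constant $c$ by evaluating the contact-lift identity against the Reeb-type direction $\partial/\partial\theta$, and then to identify $\widehat{\upsilon}$ by its two defining properties: being horizontal for $\kappa$ (i.e.\ annihilated by $\beta$) and projecting under $d\kappa$ to $\upsilon$. First I would recall what \lq\lq contact lift with Hamiltonian $c+1/\Phi$\rq\rq\ means in the sense of (\ref{eqn:infinitesimal-lift}): applied to the circle bundle $\kappa:X'\to N'$ with connection form $\beta$, it says the infinitesimal generator of $\nu^X$, which is $-\partial/\partial\theta$, decomposes as the horizontal lift of $\upsilon$ minus $(c+1/\Phi)$ times the generator of fiberwise rotation for $\kappa$. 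The generator of the fiber rotation for $\kappa$ is exactly $\xi_X$ (up to the normalization already fixed by $\beta(\xi_X)=-1$ in the Lemma preceding \ref{lem:kahler-form-N}), so the identity to be matched reads $-\partial/\partial\theta = \widehat{\upsilon} + (c+1/\Phi)\,\xi_X$, equivalently $\widehat{\upsilon} = -\partial/\partial\theta - (c+1/\Phi)\,\xi_X$.

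Next I would apply $\beta$ to both sides. Since $\widehat{\upsilon}$ is horizontal, $\beta(\widehat{\upsilon})=0$; using $\beta=\alpha/\Phi$, $\beta(\partial/\partial\theta)=\alpha(\partial/\partial\theta)/\Phi = 1/\Phi$ (because $\alpha$ is a connection form for $\pi$ normalized so that $\alpha(\partial/\partial\theta)=1$), and $\beta(\xi_X)=-1$. Plugging in gives $0 = -1/\Phi + (c+1/\Phi)$, hence $c=0$. This is the short computational heart of the statement, and I expect no obstacle here beyond keeping the normalization conventions straight — in particular that $\alpha(\partial/\partial\theta)=1$ and that $\xi_X = \xi_M^\sharp - \Phi\,\partial/\partial\theta$ from (\ref{eqn:infinitesimal-lift}), which also controls the sign in $\beta(\xi_X)=-1$.

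Finally, with $c=0$ in hand, the identity becomes $\widehat{\upsilon} = -\partial/\partial\theta - (1/\Phi)\,\xi_X$. Substituting $\xi_X = \xi_M^\sharp - \Phi\,\partial/\partial\theta$ yields $\widehat{\upsilon} = -\partial/\partial\theta - (1/\Phi)\,\xi_M^\sharp + \partial/\partial\theta = -(1/\Phi)\,\xi_M^\sharp$, which is the asserted formula. To make the argument self-contained I would also verify directly that $-(1/\Phi)\,\xi_M^\sharp$ genuinely is the horizontal lift of $\upsilon$: it lies in $\mathcal{H}=\ker\alpha$ since $\xi_M^\sharp$ is $\pi$-horizontal, it is $\mu^X$-invariant (as $\xi_M^\sharp$ and $\Phi$ both are), so it descends to a vector field on $N'$; and $d\kappa$ applied to it recovers $\upsilon$ because, comparing with $-\partial/\partial\theta = \widehat{\upsilon}+(1/\Phi)\,\xi_X$ and noting $\xi_X$ is $\kappa$-vertical, $d\kappa(-\partial/\partial\theta)=d\kappa(\widehat{\upsilon})=\upsilon$ while $d\kappa\big({-}(1/\Phi)\xi_M^\sharp\big)=d\kappa\big({-}\partial/\partial\theta-(1/\Phi)\xi_X\big)=d\kappa(-\partial/\partial\theta)=\upsilon$ as well. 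The only mild subtlety is bookkeeping of which of the two commuting circle actions is \lq\lq vertical\rq\rq\ for which projection, but this has already been set up carefully in \S\ref{sctn:kahler-structure-N'}, so it should go through cleanly.
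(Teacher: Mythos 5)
Your proposal is correct and follows essentially the same route as the paper: both start from the contact-lift identity $-\partial/\partial\theta=\widehat{\upsilon}+\left(c+\frac{1}{\Phi}\right)\xi_X$ for the bundle $\kappa:X'\rightarrow N'$ and then use (\ref{eqn:infinitesimal-lift}) to conclude. The only cosmetic difference is that you extract $c=0$ by pairing with $\beta$ (using $\beta(\widehat{\upsilon})=0$, $\beta(\xi_X)=-1$), while the paper substitutes $\xi_X=\xi_M^\sharp-\Phi\,\partial/\partial\theta$ and compares the $\pi$-horizontal and $\pi$-vertical components — an equivalent computation; just note that the generator of the fiber rotation for $\kappa$ is $-\xi_X$ rather than $\xi_X$, a sign your final identity already has right.
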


\begin{proof}
We want to give a decomposition of $-\partial/\partial \theta$
analogous to (\ref{eqn:infinitesimal-lift}), but referred to 
the circle bundle structure $\kappa:X'\rightarrow N'$.
To this end, let $\beta$ be a locally defined angular coordinate on $X'$
referred to $\kappa$, so that $\xi_X=-\partial/\partial \beta$.
Since the horizontal component of $-\partial/\partial \theta$
with respect to $\kappa$ is $\widehat{\upsilon}$, the analogue
of (\ref{eqn:infinitesimal-lift}) is
\begin{eqnarray}
 \label{eqn:infinitesimal-lift-prime}
-\dfrac{\partial}{\partial \theta}&=&
\widehat{\upsilon}-\left(c+\frac{1}{\Phi}\right)\,\dfrac{\partial}{\partial \beta}
\nonumber\\
&=&\widehat{\upsilon}+\left(c+\frac{1}{\Phi}\right)\,
\left(\xi_M^\sharp-\Phi\,\dfrac{\partial}{\partial \theta}\right)\nonumber\\
&=&\left[\widehat{\upsilon}+\left(c+\frac{1}{\Phi}\right)\,\xi_M^\sharp\right]
-(1+c\,\Phi)\,\dfrac{\partial}{\partial \theta},
\end{eqnarray}
where the latter is a decomposition into horizontal and vertical
components with respect to $\pi$.
The latter equality is equivalent to the claimed statement.

\end{proof}

\subsection{The complexified action on $A^\vee_0$}

The action $\mu^M:\mathbf{T}^1\times M\rightarrow M$ complexifies
to an holomorphic action $\widetilde{\mu}^M:\mathbb{T}^1\times M\rightarrow M$,
where $\mathbb{T}^1=\mathrm{GL}(1,\mathbb{C})\cong \mathbb{C}^*$ (see, for instance,
the discussion in \S4 of \cite{guillemin-sternberg}).
Let $(\rho,\vartheta)$ be polar coordinates on $\mathbf{C}^*$,
and let $\xi=:\partial/\partial \vartheta$, $\eta=:-\rho\,\partial/\partial\rho$;
then $\eta=J_0(\xi)$ ($J_0$ being the complex structure on 
$\mathbf{C}^*$). By holomorphicity, if $\xi_M$ and $\eta_M$ are the induced vector
fields on $M$, then $\eta_M=J_M(\xi_M)$

On the other hand, the contact lift $\mu^X:\mathbf{T}^1\times X\rightarrow X$
of $\mu^M$ extends to a linearized action $\mu^{A^\vee}:\mathbf{T}^1\times A^\vee_0\rightarrow
A^\vee_0$. There is a natural diffeomorphism $X\times \mathbb{R}_+\cong A^\vee_0$,
given by $(x,r)\mapsto r\cdot x$; as a function on $A^\vee_0$, $r$ is simply
the norm for the given Hermitian structure. If $\theta$ is a locally defined angular
coordinate on $X$, depending on the choice of a local unitary frame of
$A^\vee$, then $(r,\theta)$ restrict to polar coordinates along the fibers of
$A^\vee_0$. Thus, if $J_{A^\vee}$ is the complex structure of $A^\vee$, then
the globally defined vertical vector fields $\partial/\partial \theta$ and $\partial/\partial r$
on $A^\vee_0$ are related by $J_{A^\vee}(\partial/\partial \theta)=-r\,\partial/\partial r$.
By (\ref{eqn:infinitesimal-lift}), the infinitesimal generator of $\mu^{A^\vee}$
is 
\begin{equation}
 \label{eqn:infinitesimal-generator-line-bundle}
\xi_{A^\vee}=\xi_M^\sharp-\Phi\,\dfrac{\partial}{\partial \theta},
\end{equation}
where the horizontal lift is now taken in the tangent bundle of $A^\vee$, with respect to
the extended connection.

The action $\mu^{A^\vee}:\mathbf{T}^1\times A^\vee_0\rightarrow
A^\vee_0$ again extends to an holomorphic action 
$\widetilde{\mu}^{A^\vee}:\mathbb{T}^1\times A^\vee_0\rightarrow
A^\vee_0$, which is of course a linearization of $\widetilde{\mu}^M$ (see the discussion in
\S 5 of \cite{guillemin-sternberg}).
By holomorphicity, the induced vector fields $\xi_{A^\vee}$ and $\eta_{A^\vee}$, with 
$\xi_{A^\vee}$ given by (\ref{eqn:infinitesimal-generator-line-bundle}), satisfy
\begin{equation}
 \label{eqn:second-infinitesimal-generator-line-bundle}
\eta_{A^\vee}=J_{A^\vee}(\xi_{A^\vee})=\eta_M^\sharp+\Phi\,r\,\dfrac{\partial}{\partial r}.
\end{equation}

Let $\mathcal{N}_A:A^\vee_0\rightarrow \mathbb{R}$ be the square norm function;
thus $\mathcal{N}_A=r^2$ under the previous diffeomorphism
$A^\vee_0\cong X\times \mathbb{R}_+$. Then 
\begin{equation}
 \label{eqn:square-norm-derivation}
\xi_{A^\vee}(\mathcal{N}_A)=0,\,\,\,\,\,\,\,\,\,\eta_{A^\vee}(\mathcal{N}_A)=2\,\Phi\,\mathcal{N}_A>0.
\end{equation}

\begin{lem}
 \label{lem:estimate-norm-moment}
Let $a=:\min |\Phi|$, $A=:\max|\Phi|$. Then, for every $\lambda\in A^\vee_0$,
we have
$$
e^{2a\,t}\,\mathcal{N}_A(\lambda)\le 
\mathcal{N}_A\left(\widetilde{\mu}^{A^\vee}_{e^{-t}}\left(\lambda\right)\right)
\le e^{2A\,t}\,\mathcal{N}_A(\lambda)
$$
if $t\ge 0$, and
$$
e^{2A\,t}\,\mathcal{N}_A(\lambda)\le \mathcal{N}_A\left(\widetilde{\mu}^{A^\vee}_{e^{-t}}\left(\lambda\right)\right)
\le e^{2a\,t}\,\mathcal{N}_A(\lambda)
$$
if $t<0$.
\end{lem}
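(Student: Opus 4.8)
The plan is to recognise $\widetilde{\mu}^{A^\vee}_{e^{-t}}$ as the time-$t$ flow of the vector field $\eta_{A^\vee}$ appearing in (\ref{eqn:second-infinitesimal-generator-line-bundle}), and then to integrate the differential identity (\ref{eqn:square-norm-derivation}). First I would observe that, with $(\rho,\vartheta)$ the polar coordinates on $\mathbb{C}^*$ and $\eta=-\rho\,\partial/\partial\rho$ as above, the one-parameter subgroup of $\mathbb{C}^*$ tangent to $\eta$ at the identity $1$ is $\{e^{-t}\}_{t\in\mathbb{R}}$; since $\widetilde{\mu}^{A^\vee}$ is a holomorphic action whose infinitesimal generator in the $\eta$-direction is precisely $\eta_{A^\vee}$, it follows that $t\mapsto \widetilde{\mu}^{A^\vee}_{e^{-t}}(\lambda)$ is the integral curve of $\eta_{A^\vee}$ through $\lambda$.

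Fixing $\lambda\in A^\vee_0$, I would set $g(t)=:\mathcal{N}_A\big(\widetilde{\mu}^{A^\vee}_{e^{-t}}(\lambda)\big)$ and $\varphi(t)=:\Phi\big(\widehat{\pi}\big(\widetilde{\mu}^{A^\vee}_{e^{-t}}(\lambda)\big)\big)$. Differentiating in $t$ and using the second relation in (\ref{eqn:square-norm-derivation}) gives the scalar linear ODE $g'(t)=2\,\varphi(t)\,g(t)$, whose solution with $g(0)=\mathcal{N}_A(\lambda)$ is
\begin{equation*}
g(t)=\mathcal{N}_A(\lambda)\,\exp\!\left(2\int_0^t\varphi(s)\,ds\right).
\end{equation*}
Since $\Phi>0$, we have $0<a\le\varphi(s)\le A$ for all $s$; for $t\ge 0$ the exponent therefore lies between $2a\,t$ and $2A\,t$, which yields the first pair of inequalities, while for $t<0$ reversing the limits of integration (equivalently, multiplying $a\le\varphi\le A$ by the negative number $t$) puts the exponent between $2A\,t$ and $2a\,t$, which yields the second pair.

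The only step needing genuine care is the sign bookkeeping in the first paragraph: one must verify, consistently with the conventions fixed just above (\ref{eqn:second-infinitesimal-generator-line-bundle})---relating $\xi$, $\eta=J_0(\xi)$, and the parametrisation $e^{-t}$ of the ray in $\mathbb{C}^*$---that $\widetilde{\mu}^{A^\vee}_{e^{-t}}$ is the \emph{forward} flow of $\eta_{A^\vee}$, so that the ODE comes out with the sign stated. Once that is settled, the remainder is an elementary Gronwall-type estimate, and the fact that $\Phi$ is not constant is precisely what makes the conclusion a two-sided exponential sandwich rather than an equality.
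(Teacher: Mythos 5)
Your proposal is correct and follows essentially the same route as the paper: the paper likewise identifies $t\mapsto\widetilde{\mu}^{A^\vee}_{e^{-t}}(\lambda)$ with the flow generated by $\eta_{A^\vee}$, differentiates $\mathcal{N}_A$ along it using (\ref{eqn:square-norm-derivation}) to get $\tfrac{d}{dt}\ln\mathcal{N}_A^\lambda=2\,\Phi$, and concludes from the bound $2a\le\tfrac{d}{dt}\ln\mathcal{N}_A^\lambda\le 2A$. Your explicit integration of the linear ODE is just a rephrasing of that logarithmic-derivative estimate.
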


\begin{proof}
 The invariant vector field $\eta=-\rho\,\partial/\partial\rho$ on $\mathbb{C}^*$ is associated
to the 1-parameter subgroup $t\mapsto e^{-t}$. Therefore, if given $\lambda\in A^\vee_0$
we define 
$$
\mathcal{N}_A^\lambda(t)=:\mathcal{N}_A\left(\widetilde{\mu}^{A^\vee}\left(e^{-t},\lambda\right)\right)
\,\,\,\,\,\,\,\,\,\,\,\,\,\,\,(t\in \mathbb{R})
$$
then by (\ref{eqn:square-norm-derivation})
\begin{eqnarray*}
\frac{d}{dt}\mathcal{N}_A^\lambda(t)=
\eta_{A^\vee}\big(\mathcal{N}_A\big)\left(\widetilde{\mu}^{A^\vee}\left(e^{-t},\lambda\right)\right)
=2\,\Phi\left(\widetilde{\mu}^{A^\vee}_{e^{-t}}\left(\lambda\right)\right)\,\mathcal{N}_A^\lambda(t),
\end{eqnarray*}
which can be rewritten 
\begin{equation}
 \label{eqn:logarithmic-derivation}
\left.\dfrac{d}{dt}\ln (\mathcal{N}_A^\lambda)\right|_{t=t_0}=2\,
\Phi\left(\widetilde{\mu}^{A^\vee}_{e^{-t_0}}\left(\lambda\right)\right)
\end{equation}
for any $t_0\in \mathbb{R}$.
We deduce from (\ref{eqn:logarithmic-derivation}) that
\begin{equation}
 \label{eqn:derivation-estimate-logarithmic}
2\,a\le \dfrac{d}{dt}\ln (\mathcal{N}_A^\lambda)\le 2\,A,
\end{equation}
which easily implies the claim.

\end{proof}

Let us set, for $(z,\lambda)\in \mathbb{C}^*\times A^\vee_0$:
\begin{equation}
 \label{eqn:bullet-action}
z\bullet \lambda=:\widetilde{\mu}^{A^\vee}\left(z^{-1},\lambda\right)
=\widetilde{\mu}^{A^\vee}_{z^{-1}}\left(\lambda\right).
\end{equation}

\begin{cor}
\label{cor:norm-product-z}
 If $|z|\ge 1$, then
$$
|z|^{2a}\,\mathcal{N}_A(\lambda)\le \mathcal{N}_A\left(z\bullet\lambda\right)
\le |z|^{2A}\,\mathcal{N}_A(\lambda).
$$
If $0<|z|<1$, then
$$
|z|^{2A}\,\mathcal{N}_A(\lambda)\le \mathcal{N}_A\left(z\bullet\lambda\right)
\le |z|^{2a}\,\mathcal{N}_A(\lambda).
$$
\end{cor}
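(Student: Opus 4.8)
The plan is to reduce everything to Lemma~\ref{lem:estimate-norm-moment} by splitting $z$ into modulus and phase. Write $z=|z|\,e^{i\phi}$ with $\phi\in\mathbb{R}$; then $z^{-1}=|z|^{-1}\,e^{-i\phi}$, and since $|z|^{-1}$ and $e^{-i\phi}$ are commuting elements of $\mathbb{C}^*$, the fact that $\widetilde{\mu}^{A^\vee}$ is a group action yields
$$
z\bullet\lambda=\widetilde{\mu}^{A^\vee}_{z^{-1}}(\lambda)=\widetilde{\mu}^{A^\vee}_{|z|^{-1}}\bigl(\widetilde{\mu}^{A^\vee}_{e^{-i\phi}}(\lambda)\bigr).
$$

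First I would observe that the phase part can be discarded. Indeed $e^{-i\phi}\in\mathbf{T}^1\subseteq\mathbb{C}^*$, so $\widetilde{\mu}^{A^\vee}_{e^{-i\phi}}$ coincides with $\mu^{A^\vee}_{e^{-i\phi}}$, which is the restriction of a metric‑preserving linearization of $A^\vee$ and hence preserves the square‑norm function $\mathcal{N}_A$; equivalently, by (\ref{eqn:square-norm-derivation}) we have $\xi_{A^\vee}(\mathcal{N}_A)=0$, and $\xi_{A^\vee}$ generates the compact part $\mathbf{T}^1$ of the extended action, so $\mathcal{N}_A$ is $\mathbf{T}^1$‑invariant. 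In either case $\mathcal{N}_A\bigl(\widetilde{\mu}^{A^\vee}_{e^{-i\phi}}(\lambda)\bigr)=\mathcal{N}_A(\lambda)$, and therefore $\mathcal{N}_A(z\bullet\lambda)=\mathcal{N}_A\bigl(\widetilde{\mu}^{A^\vee}_{|z|^{-1}}(\lambda)\bigr)$.

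It then remains to apply Lemma~\ref{lem:estimate-norm-moment} with the substitution $t=\ln|z|$, so that $|z|^{-1}=e^{-t}$, $|z|=e^{t}$, $e^{2a\,t}=|z|^{2a}$ and $e^{2A\,t}=|z|^{2A}$. The case $|z|\ge 1$ corresponds to $t\ge 0$ and the case $0<|z|<1$ to $t<0$, and the two inequalities of Lemma~\ref{lem:estimate-norm-moment} translate verbatim into the two assertions of the Corollary. There is no substantial obstacle in this argument; the only point worth stating explicitly is the $\mathbf{T}^1$‑invariance of $\mathcal{N}_A$, which is what allows one to strip off the phase of $z$ and reduce to the real one‑parameter subgroup already handled in Lemma~\ref{lem:estimate-norm-moment}.
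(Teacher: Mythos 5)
Your proposal is correct and follows essentially the same route as the paper: write $z$ as modulus times phase, strip off the phase using the metric-preserving (hence $\mathcal{N}_A$-invariant) action of $\mathbf{T}^1$, and then invoke Lemma \ref{lem:estimate-norm-moment} with $t=\ln|z|$. The only difference is that you spell out the $\mathbf{T}^1$-invariance of $\mathcal{N}_A$ a bit more explicitly, which is fine.
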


\begin{proof}
 If $z=e^{t+is}$, with $t,\,s\in \mathbb{R}$ and $|z|=e^t$, 
then because the action 
of $e^{is}\in \mathbf{T}^1$ is metric preserving we have
$$
\mathcal{N}_A\left(z\bullet\lambda\right)=\mathcal{N}_A\left(e^t\bullet\lambda\right)
=\mathcal{N}_A\left(\widetilde{\mu}^{A^\vee}_{e^{-t}}\left(\lambda\right)\right).
$$
Thus the Corollary is just a restatement of Lemma \ref{lem:estimate-norm-moment}.
\end{proof}

\begin{cor}
 \label{cor:surjective-product-map}
The $\mathcal{C}^\infty$ map $\varUpsilon:\mathbb{R}_+\times X'\rightarrow {A^\vee_0}'$
given by $(t,x)\mapsto t\bullet x$ is a bijection.
\end{cor}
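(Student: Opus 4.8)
The plan is to show that $\varUpsilon$ is both injective and surjective by reducing everything to the structure of $A^\vee_0$ as a bundle of punctured complex lines, together with the norm estimates already established in Corollary \ref{cor:norm-product-z}. The key observation is that the $\bullet$-action of the positive real ray $\mathbb{R}_+\subseteq \mathbb{C}^*$ moves a point $\lambda\in A^\vee_0$ along the real-analytic flow of the vector field $\eta_{A^\vee}$ given by (\ref{eqn:second-infinitesimal-generator-line-bundle}); by (\ref{eqn:square-norm-derivation}) this flow strictly increases $\mathcal{N}_A$ (since $\Phi>0$), so each $\bullet$-orbit of $\mathbb{R}_+$ meets the unit circle bundle $X$ in exactly one point. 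That already gives a clean bijection $\mathbb{R}_+\times X\to A^\vee_0$; the content of the Corollary is the refinement to the open dense locus, namely that the $\mathbb{R}_+$-orbit of a point of $X$ stays inside $X'$, equivalently that $\mu^X$-freeness is preserved along the $\bullet$-flow, so that $\varUpsilon$ restricts to a bijection $\mathbb{R}_+\times X'\to {A^\vee_0}'$.

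I would proceed in three steps. First, \emph{injectivity}: suppose $t_1\bullet x_1 = t_2\bullet x_2$ with $t_i\in\mathbb{R}_+$ and $x_i\in X'$. Apply $\mathcal{N}_A$ to both sides: since $\mathcal{N}_A(x_i)=1$, Corollary \ref{cor:norm-product-z} forces $t_1^{2a}\le t_2^{2A}$ and symmetric inequalities, but more directly one uses that along a single $\bullet$-orbit the function $t\mapsto \mathcal{N}_A(t\bullet x)$ is strictly increasing (from (\ref{eqn:logarithmic-derivation}), its logarithmic derivative is $2\Phi>0$); hence $t_1\bullet x_1$ and $t_2\bullet x_2$ lying on the same orbit with the same norm forces $t_1 = t_2$ along that orbit, and then $x_1 = x_2$. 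Second, \emph{surjectivity}: given $\lambda\in {A^\vee_0}'$, consider the orbit map $t\mapsto \mathcal{N}_A(\widetilde\mu^{A^\vee}_{e^{-t}}(\lambda))$; by (\ref{eqn:derivation-estimate-logarithmic}) its derivative is pinched between $2a$ and $2A$, so it is a strictly increasing diffeomorphism of $\mathbb{R}$ onto $\mathbb{R}_+$ and in particular takes the value $1$ at a unique $t_0$. Then $e^{t_0}\bullet\lambda =: x$ has unit norm, i.e.\ $x\in X$, and $\lambda = e^{-t_0}\bullet x = t\bullet x$ with $t = e^{-t_0}\in\mathbb{R}_+$. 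Third, one checks that $x$ in fact lies in $X'$: the $\bullet$-action of $\mathbb{C}^*$ commutes with $\mu^X$ and with scalar multiplication $\nu^X$ and is by bundle automorphisms covering $\widetilde\mu^M$, so it carries the $\mu^X$-free locus $X'$ into itself; hence $\lambda\in{A^\vee_0}'$ (which is by definition the $\mathbb{R}_+$-saturation of $X'$) if and only if $x\in X'$.

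The smoothness of $\varUpsilon$ is immediate since it is a restriction of the composite of $\widetilde\mu^{A^\vee}$ with the inclusion, both $\mathcal{C}^\infty$; and its inverse is smooth because it is given by $\lambda\mapsto(e^{-t_0(\lambda)},e^{t_0(\lambda)}\bullet\lambda)$, where $t_0(\lambda)$ is the unique solution of $\mathcal{N}_A(e^{t_0}\bullet\lambda)=1$ and depends smoothly on $\lambda$ by the implicit function theorem (the relevant derivative being $2\Phi>0$, hence nonvanishing). Thus $\varUpsilon$ is even a diffeomorphism, though only the bijectivity is asserted here.

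The main obstacle is the bookkeeping needed to identify the image precisely as ${A^\vee_0}'$ rather than all of $A^\vee_0$: one must be careful that ${A^\vee_0}'$ is \emph{defined} to be the locus lying over $M'$ (equivalently the $\mathbb{C}^*\bullet\mathbf{T}^1$-saturation of $X'$), and then the stability of $X'$ under the $\bullet$-flow — which is what makes the restriction well-defined and surjective onto ${A^\vee_0}'$ — follows from the fact, recorded earlier, that $X'$ is $\nu^X\times\mu^X$-invariant and that the complexified action $\widetilde\mu^{A^\vee}$ permutes the $\mu^X$-orbits over $M$ while covering $\widetilde\mu^M$, which preserves $M'$. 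Everything else is a direct consequence of the monotonicity of $\mathcal{N}_A$ along $\bullet$-orbits, itself an immediate corollary of $\Phi>0$ via (\ref{eqn:square-norm-derivation}).
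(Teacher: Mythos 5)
Your proposal is correct and follows essentially the same route as the paper: both arguments rest on the monotonicity of $\mathcal{N}_A$ along the $\bullet$-orbits of $\mathbb{R}_+$ coming from $\Phi>0$ (Corollary \ref{cor:norm-product-z}, equivalently (\ref{eqn:derivation-estimate-logarithmic})), with the limits as $t\to 0^+,+\infty$ giving existence of the unit-norm point on each orbit and strict monotonicity giving uniqueness. Your additional remarks — that ${A^\vee_0}'$ is preserved by the commuting complexified action so the unit-norm point indeed lies in $X'$, and that the inverse is smooth via the implicit function theorem — are sound refinements of details the paper leaves implicit (the latter appearing separately in Remark \ref{rem:real-analytic-equivalence}).
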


\begin{proof}
By Corollary \ref{cor:norm-product-z}, $\mathcal{N}_A(t\bullet \lambda)\rightarrow +\infty$
as $t\rightarrow +\infty$, and $\mathcal{N}_A(t\bullet \lambda)\rightarrow 0^+$
as $t\rightarrow 0^+$; thus for any $\lambda\in {A^\vee_0}'$ there exists $t_\lambda\in \mathbb{R}_+$
such that $t_\lambda^{-1}\bullet\lambda\in X'$. Corollary \ref{cor:norm-product-z} also implies that
$\mathcal{N}_A\left(t\bullet x\right)>\mathcal{N}_A\left(x\right)$ for any
$t>1$ and $x\in X'$. Therefore,
$t\mapsto \mathcal{N}_A\left(t\bullet\lambda\right)$ is a strictly increasing function, since
if $t_1<t_2$ then 
\begin{eqnarray}
\mathcal{N}_A\left(t_2\bullet \lambda\right)&=&
\mathcal{N}_A\left(\left(\frac{t_2}{t_1}\,t_1\right)\bullet \lambda\right)\nonumber\\
&=&\mathcal{N}_A\left(\left(\frac{t_2}{t_1}\right)\bullet (t_1\bullet \lambda)\right)>
\mathcal{N}_A\left(t_1\bullet \lambda\right).
\end{eqnarray}
Hence
$t_\lambda$ is in fact unique.
\end{proof}

\begin{rem}
\label{rem:real-analytic-equivalence}
More is true. 
Since $X'\subseteq A^\vee_0$ is a real-analytic submanifold (see Corollary
\ref{cor:analytic-N-1}), $\mathbb{R}_+\times X'$ is a real-analytic submanifold
of $\mathbb{C}^*\times A^\vee_0$. Being the restriction of the holomorphic map
(\ref{eqn:bullet-action}), $\varUpsilon$ is then a real-analytic bijection of real-analytic
manifolds. It is in fact also a local diffeomorphism, for its differential has everywhere
maximal rank; by the real-analytic inverse function theorem (Theorem 2.5.1 of
\cite{parks-krantz}), $\varUpsilon$ is a 
real-analytic equivalence between $\mathbb{R}_+\times X'$
and ${A^\vee_0}'$
\end{rem}

Recall that a Lie group action on a manifold $P$ is called \textit{proper}
if the associated \textit{action map} $G\times P\rightarrow P\times P$, $(g,p)\mapsto (g\cdot p,p)$,
is proper (Definition B2 of \cite{guillemin-ginzburg-karshon}).

Let ${A^\vee_0}'\subseteq A^\vee_0$ be the inverse image of $M'$; in other words,
in terms of the diffeomorphism $A^\vee_0\cong X\times \mathbb{R}_+$, we have
${A^\vee_0}'\cong X'\times \mathbb{R}_+$.

\begin{cor}
\label{cor:proper-action}
The complexified action $\widetilde{\mu}^{A^\vee}:\mathbb{T}^1\times A^\vee_0\rightarrow A^\vee_0$
is proper. In addition, its restriction to ${A^\vee_0}'$ is free.
\end{cor}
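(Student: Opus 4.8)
The plan is to establish the two assertions separately, in each case reducing matters to Corollary \ref{cor:norm-product-z} and exploiting that, since $M$ is compact and $\Phi>0$, we have $a=\min|\Phi|>0$ and $A=\max|\Phi|<+\infty$. For properness, I would work with the map $\Psi\colon\mathbb{T}^1\times A^\vee_0\to A^\vee_0\times A^\vee_0$, $\Psi(z,\lambda)=\big(z\bullet\lambda,\lambda\big)$; since inversion is a homeomorphism of $\mathbb{C}^*$ and $z\bullet\lambda=\widetilde{\mu}^{A^\vee}_{z^{-1}}(\lambda)$, properness of $\Psi$ is equivalent to properness of the action map of $\widetilde{\mu}^{A^\vee}$. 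Let $K\subseteq A^\vee_0\times A^\vee_0$ be compact, with compact projections $K_1,K_2\subseteq A^\vee_0$. As $A^\vee_0$ omits the zero section, $\mathcal{N}_A$ is continuous and strictly positive there, hence on $K_1\cup K_2$ it attains a minimum $c_1>0$ and a maximum $c_2<+\infty$. If $(z,\lambda)\in\Psi^{-1}(K)$, then $\lambda\in K_2$ and $z\bullet\lambda\in K_1$, so $c_1/c_2\le \mathcal{N}_A(z\bullet\lambda)/\mathcal{N}_A(\lambda)\le c_2/c_1$; feeding this ratio into the two inequalities of Corollary \ref{cor:norm-product-z} (treating $|z|\ge 1$ and $0<|z|<1$ separately) and using $a>0$ yields $(c_1/c_2)^{1/(2a)}\le|z|\le(c_2/c_1)^{1/(2a)}$. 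Thus $\Psi^{-1}(K)$ is contained in $\{z\in\mathbb{C}^*:(c_1/c_2)^{1/(2a)}\le|z|\le(c_2/c_1)^{1/(2a)}\}\times K_2$, which is compact in $\mathbb{T}^1\times A^\vee_0$; being also closed (as $\Psi$ is continuous and $K$ closed), it is compact, so $\widetilde{\mu}^{A^\vee}$ is proper.

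For freeness on ${A^\vee_0}'$, I would take $\lambda\in{A^\vee_0}'$ with $z\bullet\lambda=\lambda$ and first show $|z|=1$: indeed $\mathcal{N}_A(z\bullet\lambda)=\mathcal{N}_A(\lambda)>0$, while Corollary \ref{cor:norm-product-z} gives $\mathcal{N}_A(z\bullet\lambda)\ge|z|^{2a}\mathcal{N}_A(\lambda)$ when $|z|\ge1$ and $\mathcal{N}_A(z\bullet\lambda)\le|z|^{2a}\mathcal{N}_A(\lambda)$ when $0<|z|<1$; since $a>0$, both $|z|>1$ and $|z|<1$ are impossible, so $z=e^{i\theta}\in\mathbf{T}^1$ and the equation reads $\mu^{A^\vee}_{e^{-i\theta}}(\lambda)=\lambda$. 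Now $\mu^{A^\vee}$ is a linearization, hence fiberwise $\mathbb{C}$-linear, and it preserves $\mathcal{N}_A$ by (\ref{eqn:square-norm-derivation}); therefore, under the diffeomorphism $A^\vee_0\cong X\times\mathbb{R}_+$, $(x,r)\mapsto r\cdot x$, one has $\mu^{A^\vee}_{e^{-i\theta}}(x,r)=\big(\mu^X_{e^{-i\theta}}(x),r\big)$. Writing $\lambda=(x,r)$ with $x\in X'$ (recall ${A^\vee_0}'\cong X'\times\mathbb{R}_+$), we conclude $\mu^X_{e^{-i\theta}}(x)=x$; but $\mu^X$ acts freely on $X'$ since $T_{\mathrm{gen}}$ is trivial, so $e^{-i\theta}=1$ and $z=1$.

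I expect no serious obstacle here: given Corollary \ref{cor:norm-product-z} the estimates are routine. The two points meriting care are the strict positivity $a=\min|\Phi|>0$ — which is exactly what makes the exponential bounds $|z|^{2a}$ pin down $|z|$, both for the compactness of $\Psi^{-1}(K)$ and for the reduction $|z|=1$ in the freeness step — and the elementary observation that a norm-preserving, fiberwise-linear action necessarily respects the splitting $A^\vee_0\cong X\times\mathbb{R}_+$, which is what allows the freeness of $\mu^X$ on $X'$ to be imported verbatim.
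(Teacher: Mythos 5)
Your proposal is correct and follows essentially the same route as the paper: properness is obtained by trapping the group component of a preimage of a compact set in a compact annulus via the norm estimates of Corollary \ref{cor:norm-product-z} (the paper phrases this with the action map and the cases $|w|\le 1$, $|w|\ge 1$, but the estimates are identical), and freeness is reduced, via strict monotonicity of $t\mapsto\mathcal{N}_A(t\bullet\lambda)$, to the freeness of $\mu^X$ on $X'$. The only difference is that you spell out the fiberwise-linear, norm-preserving compatibility with the splitting $A^\vee_0\cong X\times\mathbb{R}_+$, which the paper leaves implicit in its one-line remark.
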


\begin{proof}
 Let $\Upsilon:\mathbb{T}^1\times A^\vee_0\rightarrow A^\vee_0\times A^\vee_0$ be the action map of
$\widetilde{\mu}^{A^\vee}$, and let $R\subset A^\vee_0\times A^\vee_0$ be a compact subset.
If $\pi_j:A^\vee_0\times A^\vee_0\rightarrow A^\vee_0$ is the projection onto the $j$-th factor,
let $R_j=:\pi_j(R)$. Then $R_j$ is compact  and $R\subset R_1\times R_2$.
Therefore, to prove that $\Upsilon$ is proper, it suffices to show that 
$\Upsilon^{-1}(R_1\times R_2)$
is compact, for any pair of compact subsets $R_1, \,R_2\subset A^\vee_0$.
Clearly, $\Upsilon^{-1}(R_1\times R_2)\subseteq  \mathbb{T}^1\times R_2$. For $j=1,2$
let 
$\ell_j=:\min_{R_j}\mathcal{N}_A$ and $L_j=:\max_{R_j}\mathcal{N}_A$. 

Suppose $(w,\lambda)\in \Upsilon^{-1}(R_1\times R_2)$,
and set $z=w^{-1}$, so that $\widetilde{\mu}^{A^\vee}(w,\lambda)=z\bullet \lambda$.

If $|w|\le 1$, then $|z|\ge 1$ and so by Corollary \ref{cor:norm-product-z}
$$
|z|^{2a}\,\mathcal{N}_A(\lambda)\le \mathcal{N}_A(z\bullet \lambda)
\le |z|^{2A}\,\mathcal{N}_A(\lambda).
$$
Since $\lambda\in R_2$, we have $l_2\le\mathcal{N}_A(\lambda)$, and since
$z\bullet \lambda\in R_1$, we have $\mathcal{N}_A(z\bullet \lambda)\le L_1$.
Therefore, if $|z|\ge 1$ then
$$
\ell_2\,|z|^{2a}\le L_1\,\,\,\,\,\Longrightarrow\,\,\,\,|z|\le 
\left(\frac{L_1}{l_2}\right)^{1/2a}.
$$
In other words, if $|w|\le 1$ then $(\ell_2/L_1)^{1/2a}\le |w|$.
Similarly, one sees that if $|w|\ge 1$ then
$|w|\le (L_2/\ell_1)^{1/2a}$.

Therefore, if $p:\mathbb{T}^1\times A^\vee_0 \rightarrow A^\vee_0$
is the projection onto the first factor, then
$S\left(\Upsilon^{-1}(R_1\times R_2)\right)$ is compact.
Therefore, $\Upsilon^{-1}(R_1\times R_2)\subseteq S\times R_2$ is also
compact, and this completes the proof that the action is proper.

The statement about the freeness of the action follows immediately from
(\ref{eqn:derivation-estimate-logarithmic}) and the definition of $X'$.

\end{proof}

\subsection{The circle bundle structures}
\label{sctn:circle-bundle-structure}

Let us view $\kappa:X'\rightarrow N'$ as a circle bundle over $N'$,
with the action of $\mathbf{T}^1$ on $X'$ given by 
$\left(e^{i\theta},x\right)\mapsto e^{i\theta}\bullet x$;
the latter is defined in (\ref{eqn:bullet-action}).
On $N'$, associated to the K\"{a}hler
structure we have the volume form $dV_N=\eta^{\wedge d}/d!$; thus
on $X'$, viewed as a circle bundle over $N'$, we have the natural choice
of a volume form 
$dW_X=(1/2\pi)\,\beta\wedge \kappa^*(dV_N)$.
Algebraically, $L^2(X,dV_X)= L^2(X',dV_X)=L^2(X',dW_X)$, although the
metrics are different. Explicitly,
\begin{eqnarray}
 \label{eqn:comparison-volume-forms}
dW_X&=&\dfrac{1}{2\pi}\,\,\beta\wedge \kappa^*(dV_N)\nonumber\\
&=&\dfrac{1}{2\pi}\,\dfrac{\alpha}{\Phi}\wedge \frac{1}{d!}\,
\left(\frac{1}{\Phi}\,\pi^*(\omega)-\frac{1}{2\Phi^2}\,d\Phi\wedge \alpha\right)^{\wedge d}\nonumber\\
&=&\Phi^{-(d+1)}\,\left[\dfrac{1}{2\pi}\,\alpha\wedge \pi^*(\omega)^{\wedge d}\right]\nonumber\\
&=&\Phi^{-(d+1)}\,dV_X.
\end{eqnarray}

Furthermore, the two circle bundles $\pi$ and $\kappa$
have
different CR structures, because they do not have the same vertical tangent
bundle. However, by construction
they share the same horizontal distribution, and the
same horizontal complex structure $J_{\mathcal{H}}$.
Let $\mathcal{H}^{(0,1)}\subseteq \mathcal{H}\otimes \mathbb{C}$ be the $-i$-eigenbundle of 
$J_{\mathcal{H}}$; then the 
the boundary CR operator of either $X$ or $X'$ is defined by
setting $\overline{\partial}_bf=\left.df\right|_{\mathcal{H}^{(0,1)}}$,
for any $\mathcal{C}^\infty$ function $f$ on $X$ or $X'$,
respectively. Therefore, the boundary CR operator of $X$,
$$\overline{\partial}_b:\mathcal{C}^\infty(X)\rightarrow 
\mathcal{C}^\infty\left(X,{\mathcal{H}^{(0,1)}}^\vee\right),$$
restricts to the corresponding operator of $X'$. It follows that there is a natural
algebraic (non isometric) inclusion of corresponding Hardy spaces, $H(X)\hookrightarrow H(X')$.
The latter is an algebraic isomorphism if
$\mathrm{codim}_{\mathbb{C}}\big(M\setminus M',M\big)\ge 2$.

The action $\mu^X$ plays the role of the structure circle action of $\mathbf{T}^1$
with respect to $\kappa$. Let $\widetilde{H}_k(X')$ be the $k$-th
isotype for the latter action.
Condition (\ref{eqn:isotypes}) for $s\in L^2(X)$
to belong to $H^\mu_k(X)$ may be rewritten 
$s\left(e^{i\theta}\bullet x\right)=e^{ik\theta}\,s(x)$, for any 
$e^{i\theta}\in \mathbf{T}^1$ and $x\in X$. Therefore, the previous inclusion of
Hardy spaces yields for every $k=0,1,2,\ldots$ an
algebraic inclusion $H^\mu_k(X)\hookrightarrow \widetilde{H}_k(X')$.

\subsection{The line bundle on $N'$}

Let $B$ be the complex line bundle on $N'$ associated to 
$\kappa$ and the tautological action of
$\mathbf{T}^1=U(1)$ on $\mathbb{C}$, and let $B^\vee$ be its dual; 
thus, $B$ (respectively, $B^\vee$)
is the quotient of
$X'\times \mathbb{C}$ by the equivalence relation
$(x,w)\sim \left(e^{i\theta}\bullet x,e^{i\theta}\,w\right)$
(respectively, $(x,w)\sim \left(e^{i\theta}\bullet x,e^{-i\theta}\,w\right)$).
We can embed $\jmath:X'\hookrightarrow B^\vee$ by $x\mapsto [x,1]$.
Then $B$ and $B^\vee$ inherit natural Hermitian structures, that we shall
denote by $\ell_B$,
uniquely determined by imposing $\jmath:X'\hookrightarrow B^\vee$ to embed as the unit
circle bundle. We shall denote by $\widehat{\kappa}:B^\vee\rightarrow N'$
the projection, so that $\kappa=\widehat{\kappa}\circ \jmath$.

The connection form $\beta$ on $X$ determines a unique
metric covariant derivative $\nabla_B$ on $B$, with curvature
$\Theta_B=-2i\,\kappa^*(\eta)$. Since $\eta$ is a K\"{a}hler form on $N'$,
there is a uniquely determined holomorphic structure on $B$, such that
$\nabla_B$ is the only covariant derivative on $B$ compatible with both
the metric and the latter holomorphic structure. A local section $\sigma$ of
$B$ is holomorphic for this structure if and only if the connection matrix
with respect to $\sigma$ is of type $(1,0)$.

\begin{lem}
 \label{lem:natural-isomorphism_AB}
There is a natural biholomorphism $\overline{\Gamma}:B^\vee_0\cong {A^\vee_0}'$
of bundles over $N'$; when we view 
$X'$ as a submanifold of ${A^\vee_0}'$ and $B^\vee_0$ in the natural manners, $\overline{\Gamma}$
restricts to the identity $X'\rightarrow X'$ (that is, $\overline{\Gamma}\big(\jmath(x)\big)=x$
for any $x\in X'$). Furthermore, $\overline{\Gamma}$ preserves the horizontal distributions,
and maps biholomorphically the fibers of the bundle projection
$\widehat{\kappa}:B^\vee\rightarrow N'$ onto the orbits of the action
$\widetilde{\mu}^{A^\vee}:\mathbb{T}^1\times A^\vee_0\rightarrow A^\vee_0$.
\end{lem}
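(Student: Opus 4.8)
The plan is to exhibit $\overline{\Gamma}$ as the descent of the $\mathbb{C}^*$-action $\bullet$ of (\ref{eqn:bullet-action}), to observe that it is literally the map $\varUpsilon$ of Corollary \ref{cor:surjective-product-map} in suitable coordinates (hence a diffeomorphism), and then to establish biholomorphicity by a linear-algebra computation of its differential along $X'$, propagated to all of $B^\vee_0$ by equivariance.

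First I would set $\overline{\Gamma}\big([x,w]\big)=:w\bullet x$ for $x\in X'$ and $w\in\mathbb{C}^*$. This is well defined: since $\bullet$ is an action of the \emph{abelian} group $\mathbb{C}^*$ on $A^\vee_0$, one has $(e^{-i\theta}w)\bullet(e^{i\theta}\bullet x)=w\bullet x$, which is precisely the invariance under the equivalence relation $(x,w)\sim(e^{i\theta}\bullet x,e^{-i\theta}w)$ defining $B^\vee$; and $w\bullet x\in{A^\vee_0}'$ because $X'\subseteq{A^\vee_0}'=\widehat{\pi}^{-1}(M')$ and the latter is $\widetilde{\mu}^{A^\vee}$-invariant. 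Clearly $\overline{\Gamma}\big(\jmath(x)\big)=1\bullet x=x$. Letting $\mathbb{C}^*$ act on $B^\vee_0$ by fibrewise scalar multiplication and on ${A^\vee_0}'$ by $\bullet$, the identity $\overline{\Gamma}\big(z\cdot[x,w]\big)=(zw)\bullet x=z\bullet\overline{\Gamma}\big([x,w]\big)$ shows $\overline{\Gamma}$ is $\mathbb{C}^*$-equivariant; hence it descends to the two quotients, both canonically $N'$ through $X'$ — for ${A^\vee_0}'$ one uses that each $\widetilde{\mu}^{A^\vee}$-orbit meets $X'$ along exactly one $\mu^X$-orbit, by Corollary \ref{cor:surjective-product-map} — and the descended map is $\mathrm{id}_{N'}$. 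So $\overline{\Gamma}$ is a morphism of bundles over $N'$ carrying each fibre of $\widehat{\kappa}$ to an orbit of $\widetilde{\mu}^{A^\vee}$; on such a fibre, parametrized by $w\mapsto[x,w]$, it is the map $w\mapsto\widetilde{\mu}^{A^\vee}(w^{-1},x)$, holomorphic in $w$ and injective (the action is free on ${A^\vee_0}'$, Corollary \ref{cor:proper-action}), hence a biholomorphism onto the orbit. Finally $\overline{\Gamma}$ is a diffeomorphism: using $\{w\in\mathbb{R}_+\}$ as a global slice for the $\mathbf{T}^1$-action defining $B^\vee$ identifies $B^\vee_0$ with $X'\times\mathbb{R}_+$, and under this and the identification ${A^\vee_0}'\cong X'\times\mathbb{R}_+$ the map $\overline{\Gamma}$ becomes (up to swapping factors) the map $\varUpsilon:(t,x)\mapsto t\bullet x$, a real-analytic equivalence by Remark \ref{rem:real-analytic-equivalence}.

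The heart of the matter is holomorphicity. By the equivariance above, and since fibrewise scalar multiplication on $B^\vee_0$ and each $\widetilde{\mu}^{A^\vee}_z$ on ${A^\vee_0}'$ are biholomorphisms, $d\overline{\Gamma}$ is complex linear at $[x,w]$ iff it is so at $\jmath(x)$; so it suffices to prove $d_{\jmath(x)}\overline{\Gamma}\big(T^{0,1}_{\jmath(x)}B^\vee_0\big)=T^{0,1}_{x}A^\vee_0$ for $x\in X'$. I would decompose $T^{0,1}_{\jmath(x)}B^\vee_0=\mathcal{H}^{(0,1)}_x\oplus V^{(0,1)}_x$, with $V^{(0,1)}_x$ the $(0,1)$-part of the complex one-dimensional tangent to the $\widehat{\kappa}$-fibre: this is a direct sum of the right dimensions because $\mathcal{H}$ is transverse to the vertical, and the summand $\mathcal{H}^{(0,1)}_x$ is the CR tangent space of $\jmath(X')\subseteq B^\vee_0$ by the standard description of the induced CR structure on the unit circle bundle of a Hermitian holomorphic line bundle in terms of its Chern connection — here $\nabla_B$, whose horizontal distribution is $\ker\beta=\ker\alpha=\mathcal{H}$ and whose base complex structure induces $J_{\mathcal{H}}$ on $\mathcal{H}$, by the way the holomorphic structure on $B$ was built from $\eta$ and $\beta$ ($d\beta=2\kappa^*\eta$). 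Now $d\overline{\Gamma}$ sends $V^{(0,1)}_x$ into $T^{0,1}_xA^\vee_0$ since $\overline{\Gamma}$ is holomorphic along fibres, and it sends $\mathcal{H}^{(0,1)}_x$ into $T^{0,1}_xA^\vee_0$ since $\overline{\Gamma}$ restricts to the identity on $X'$ and the CR structure induced on $X'\subseteq A^\vee_0$ by the ambient complex structure is again $\mathcal{H}^{(0,1)}$ — this is the restriction to $X'$ of $\overline{\partial}_b$ on $X$, recalled in \S\ref{sctn:circle-bundle-structure}. Adding the two, $d_{\jmath(x)}\overline{\Gamma}\big(T^{0,1}_{\jmath(x)}B^\vee_0\big)$ is a $(d+1)$-dimensional complex subspace of $T^{0,1}_xA^\vee_0$, hence all of it; so $\overline{\Gamma}$ is holomorphic, and being a diffeomorphism, a biholomorphism. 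Preservation of the horizontal distributions then holds over $X'$, where $d\overline{\Gamma}$ is the identity on $TX'\supseteq\mathcal{H}$, hence everywhere, since the horizontal distribution of $\nabla_B$ is invariant under scalar multiplication and $\overline{\Gamma}$ is equivariant.

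The one genuinely non-formal point — and thus the main obstacle — is the identification of the two CR structures on $X'$: the one coming from $X'$ as unit circle bundle in $B^\vee_0$ and the one from $X'\subseteq A^\vee_0$. Everything else is bookkeeping with the $\mathbb{C}^*$-action; the CR matching is where one must really use how the Kähler form $\eta$ and the connection $\beta$ enter the construction of the holomorphic structure on $B$.
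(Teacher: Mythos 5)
Your proposal is correct on all the points that carry weight later in the paper (well-definedness, the identity on $X'$, the bundle property over $N'$, fibres mapped biholomorphically onto $\widetilde{\mu}^{A^\vee}$-orbits, and the biholomorphism itself), but it gets there by a genuinely different route. The paper never singles out the unit circle bundle: it parametrizes \emph{both} sides at once by $X'\times\mathbb{C}^*$, via the quotient map $\varPsi(x,w)=[x,w]$ and the map $\Gamma(x,w)=w\bullet x$, observes that each differential restricts to a $\mathbb{C}$-linear isomorphism on $\mathcal{H}_x\oplus\mathbb{C}$ (horizontal plus fibre/orbit direction), and reads off holomorphicity of $\overline{\Gamma}=\Gamma\circ\varPsi^{-1}$ at every point simultaneously; injectivity is then proved directly from the strict monotonicity of $t\mapsto\mathcal{N}_A(t\bullet\lambda)$ (Corollary \ref{cor:norm-product-z}), and surjectivity from Corollary \ref{cor:surjective-product-map}. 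You instead check complex linearity of $d\overline{\Gamma}$ only at points of $\jmath(X')$, by matching the two CR structures on $X'$ (both equal to $\mathcal{H}^{(0,1)}$ via the Chern-connection description of the induced CR structure on a unit circle bundle) together with fibrewise holomorphicity, and then propagate by $\mathbb{C}^*$-equivariance; bijectivity you obtain by the $\mathbb{R}_+$-slice identification $B^\vee_0\cong X'\times\mathbb{R}_+$, which turns $\overline{\Gamma}$ into $\varUpsilon$ of Corollary \ref{cor:surjective-product-map} and Remark \ref{rem:real-analytic-equivalence}. The underlying geometric input is the same in both arguments — holomorphicity of $\widetilde{\mu}^{A^\vee}$ and the fact that the Chern horizontal spaces of $\nabla_A$ (along $X$) and of $\nabla_B$ (along $\jmath(X')$) are $J$-invariant with induced structure $J_{\mathcal{H}}$ — but you isolate this as the one non-formal step, whereas the paper leaves it implicit inside the claims about $d\varPsi$ and $d\Gamma$; what your version buys is a clean reduction to a single slice plus equivariance, at the cost of invoking the CR identification explicitly, while the paper's version avoids any slice and gets injectivity by a two-line norm argument.

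One caution on the final sub-claim: your propagation of \lq\lq preservation of horizontal distributions\rq\rq\ from $X'$ to all of $B^\vee_0$ uses equivariance plus scalar-multiplication invariance of the $\nabla_B$-horizontal distribution, and would in addition require the $\nabla_A$-horizontal distribution to be invariant under the complexified action $\widetilde{\mu}^{A^\vee}$; a local computation in a holomorphic frame shows the obstruction to that invariance is exactly $d\Phi$, so off $X'$ the image $d\overline{\Gamma}(\mathcal{H}^{B})$ differs from the Chern horizontal distribution of $\nabla_A$ wherever $\Phi$ is non-constant. This is not a defect specific to your write-up — the paper's own proof asserts the corresponding property of $d\Gamma$ with the same lack of justification, and only the statements you do prove (identity on $X'$, fibres onto orbits, biholomorphism) are used in the sequel — but if you keep that sentence you should either restrict the horizontality claim to points of $X'$ or interpret the horizontal distribution on ${A^\vee_0}'$ as the one transported by $\overline{\Gamma}$ itself.
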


\begin{proof}
We have $B_0^\vee=X'\times \mathbb{C}^*/\sim$,
where $(x,w)\sim \left(e^{i\theta}\bullet x,e^{-i\theta}\,w\right)$,
for any $e^{i\theta}\in \mathbf{T}^1$. 
If $\varPsi:X'\times \mathbb{C}^*\rightarrow B_0^\vee$
is the quotient map, $\varPsi(x,w)=[x,w]$,
for any $(x,w)\in X'\times \mathbb{C}^*$
the differential $d_{(x,w)}\varPsi$ induces a $\mathbb{C}$-linear isomorphism
$$
\mathcal{H}_x\oplus \mathbb{C}\subset T_{(x,w)}\left(X'\times \mathbb{C}^*\right)
\cong T_{\varPsi(x,w)}B_0^\vee,
$$
which maps $\mathcal{H}_x\oplus (\mathbf{0})$ and $(\mathbf{0})\oplus \mathbb{C}$,
respectively,
onto the horizontal and vertical tangent spaces of $B^\vee$ at $\varPsi(x,w)$.

Let us consider the map 
$$
\Gamma:X'\times \mathbb{C}^*\rightarrow {A^\vee_0}',\,\,\,\,\,\,\,\,\,
(x,w)\mapsto w\bullet x.
$$
Holomorphicity of the complexified action $\widetilde{\mu}$ implies that the
differential $d_{(x,w)}\Gamma$ induces a $\mathbb{C}$-linear isomorphism
\begin{equation}
 \label{eqn:isomorphism_AB-down}
\mathcal{H}_x\oplus \mathbb{C}\subset T_{(x,w)}\left(X'\times \mathbb{C}^*\right)
\cong T_{\Gamma(x,w)}A_0^\vee,
\end{equation}
under which $\mathcal{H}_x\oplus \mathbb{C}$ maps onto the horizontal tangent space
of $A^\vee$
and $(\mathbf{0})\oplus \mathbb{C}$ onto the tangent space to the complex orbit
$\widetilde{\mu}^{A^\vee}$ at $\Gamma(x,w)$.

On the other hand, 
for any $(x,w)\in X'\times \mathbb{C}^*$ and $e^{i\theta}\in \mathbf{T}^1$,
we have
\begin{eqnarray}
 \label{eqn:associative-action-quotient}
\Gamma(x,w)=w\bullet x=\left(w\,e^{-i\theta}\right)\bullet \left(e^{i\theta}\,x\right)
=\Gamma\left(e^{i\theta}\,x,e^{-i\theta}\,w\right).
\end{eqnarray}
Therefore, $\Gamma$ passes to the quotient under $\varPsi$, that is, there exists a $\mathcal{C}^\infty$
map $\overline{\Gamma}:B_0^\vee\rightarrow {A^\vee_0}'$ such that
$\Gamma=\overline{\Gamma}\circ \varPsi$, that is,
$$
\overline{\Gamma}([x,w])=\Gamma(x,w)\,\,\,\,\,\,\,\,\,\,\,\,\left(
(x,w)\in X'\times \mathbb{C}^*\right);
$$
the previous discussion implies that
$\overline{\Gamma}$ is holomorphic. 

Corollary \ref{cor:surjective-product-map} evidently implies that 
$\Gamma$ is surjective, and therefore so is $\overline{\Gamma}$.
To see that $\overline{\Gamma}$ is also injective, suppose that
$\lambda_j=\varPsi(x_j,w_j)$, $j=1,2$, 
satisfy $\overline{\Gamma}(\lambda_1)=\overline{\Gamma}(\lambda_2)$.
Thus $w_1\bullet x_1=\Gamma(x_1,w_1)=\Gamma_2(x_2,w_2)=w_2\bullet x_2$, 
whence $\left(w_2^{-1}\,w_1\right)\bullet x_1=x_2$. This evidently implies
$\mathcal{N}_A\left(\left(w_2^{-1}\,w_1\right)\bullet x_1\right)=\mathcal{N}_A(x_1)=
\mathcal{N}_A(x_2)=1$. Since by Corollary \ref{cor:norm-product-z}
the map $t\mapsto \mathcal{N}_A(t\bullet x_1)$ is strictly increasing, this
forces $|w_1|=|w_2|$. If $w_2^{-1}\,w_1=e^{i\theta}$, we then have
$w_2=e^{-i\theta}\,w_1$, $x_2=e^{i\theta}\bullet x_1$; hence
$(x_2,w_2)=\left(e^{i\theta}\bullet x_1,e^{-i\theta}\bullet w_1\right)
\sim (x_1,w_1)$. Therefore, $\lambda_1=\lambda_2$.

Finally, any $x\in X'\subset {A^\vee_0}'$ corresponds to $[x,1]\in B^\vee_0$,
and $\overline{\Gamma}([x,1])=1\bullet x=x$. Therefore, with the previous
identification $\overline{\Gamma}$ induces the identity map on $X'$.
\end{proof}

\begin{rem}
\label{rem:natural-isomorphism_AB}
$\overline{\varGamma}$
interwines fiberwise scalar multiplication $\cdot_B$
on $B^\vee_0$ and the map 
(\ref{eqn:bullet-action}). In fact, if $b=[x,w]\in B^\vee_0$ and $z\in \mathbb{C}^*$,
then $z\cdot _Bb=[x,z\,w]$. Therefore, 
$$
\overline{\varGamma}(z\cdot_B b)=
\overline{\varGamma}\big([x,z\,w]\big)=(z\,w)\bullet x=z\bullet (w\bullet x)
=z\bullet \overline{\varGamma}(x).
$$
\end{rem}

Let $\mathcal{N}_B:A^\vee_0\rightarrow \mathbb{R}$ be the norm function associated
to the Hermitian structure of $B^\vee$, viewed as a function on $A^\vee_0$ by 
means of the biholomorphism of Lemma \ref{lem:natural-isomorphism_AB}. Then
$\mathcal{N}_B(z\bullet \lambda)=|z|^2\,\|\lambda\|^2$.

\begin{cor}
 \label{cor:analytic-N-1}
$X'\subseteq A^\vee_0$ is a real-analytic submanifold, and the projection
$\kappa:X'\rightarrow N'$ is real-analytic.
\end{cor}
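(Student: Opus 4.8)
The plan is to exhibit the unit circle bundle $X$ as a regular, real-analytic level set inside $A^\vee_0$, and then to obtain the real-analytic structure on $N'$ from the quotient by a free real-analytic circle action. First I would note that $A^\vee_0$ is a complex, hence real-analytic, manifold, and that the squared-norm function $\mathcal{N}_A:A^\vee_0\to\mathbb{R}$ is real-analytic: this is the only place where the standing hypothesis that $\omega$ be real-analytic is used, via the fact that a Hermitian metric on a holomorphic line bundle whose curvature form is real-analytic is itself real-analytic (locally the weight $e^{-\phi}$ of $\ell_A$ admits a real-analytic K\"ahler potential $\phi$, $i\,\partial\overline{\partial}\phi=\omega$). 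Since $d\mathcal{N}_A$ vanishes nowhere on $A^\vee_0$ (along a fibre $\cong\mathbb{C}^*$ one has $\mathcal{N}_A=h\,r^2$ with $h>0$), the value $1$ is a regular value, so $X=\mathcal{N}_A^{-1}(1)$ is a real-analytic hypersurface of $A^\vee_0$. As $\pi=\widehat{\pi}|_X$ is real-analytic, being the restriction of the holomorphic projection $\widehat{\pi}$, and $M'\subseteq M$ is open, the set $X'=\pi^{-1}(M')$ is open in $X$, hence a real-analytic submanifold of $A^\vee_0$.

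For the claim about $\kappa$, recall from \S\ref{sctn:circle-bundle-structure} that $\kappa:X'\to N'$ is the orbit map of the circle action $(e^{i\vartheta},x)\mapsto e^{i\vartheta}\bullet x=\widetilde{\mu}^{A^\vee}_{e^{-i\vartheta}}(x)$, i.e.\ of $\mu^X$ restricted to $X'$. That action is real-analytic, being the restriction of the holomorphic action $\widetilde{\mu}^{A^\vee}$ to the invariant real-analytic submanifold $X'$; it is free by the definition of $X'$, and it is automatically proper since $\mathbf{T}^1$ is compact. Consequently the orbit equivalence relation is the image of the real-analytic, proper, injective immersion $\mathbf{T}^1\times X'\to X'\times X'$, $(t,x)\mapsto(t\bullet x,x)$, hence a closed real-analytic submanifold of $X'\times X'$ whose first projection is a submersion; by the real-analytic version of Godement's criterion (equivalently, the real-analytic slice theorem for free actions of compact Lie groups), $N'=X'/\mathbf{T}^1$ carries a unique real-analytic structure making $\kappa$ a real-analytic principal $\mathbf{T}^1$-bundle. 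In particular $\kappa$ is real-analytic, and this is the real-analytic structure on $N'$ that we fix from now on.

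The only step that is not pure bookkeeping is the real-analyticity of $\ell_A$, equivalently the existence of real-analytic local K\"ahler potentials for $\omega$; granting that, the rest is just keeping track of which maps are real-analytic together with the standard analytic quotient theorem. As a by-product to be used in \S\ref{sctn:kahler-structure-N'}, once $X'$ is known to be real-analytic so are $\alpha$ and $\Phi$ on $X'$ (the latter by Lemma \ref{lem:real-analytic-moment-map}), hence so are $\beta=\alpha/\Phi$ and $d\beta=2\,\kappa^*(\eta)$; since $\kappa$ is now a real-analytic submersion, it follows that $\eta$, and with it the compatible complex structure $I$ of Proposition \ref{prop:I-is-integrable}, are real-analytic on $N'$.
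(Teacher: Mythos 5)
Your first claim is handled exactly as in the paper: there too one observes that the real\-/analyticity of the Hermitian metric makes $\mathcal{N}_A$ a positive real-analytic function, so that $X'=\mathcal{N}_A^{-1}(1)\cap {A^\vee_0}'$ is a real-analytic submanifold (the paper cites \S 2.7 of \cite{parks-krantz} where you invoke the regular value argument; your elliptic-regularity justification that a real-analytic $\omega$ forces $\ell_A$ to be real-analytic is a reasonable way to make explicit what the paper leaves as an assumption). For the real-analyticity of $\kappa$, however, you take a genuinely different route. The paper uses the biholomorphism $\overline{\Gamma}:B^\vee_0\cong {A^\vee_0}'$ of Lemma \ref{lem:natural-isomorphism_AB} and writes $\kappa=\widehat{\kappa}\circ\overline{\Gamma}^{-1}|_{X'}$, so that $\kappa$ is the restriction of a holomorphic map to a real-analytic submanifold; this has the advantage that real-analyticity is automatically meant with respect to the real-analytic structure underlying the complex manifold $(N',I)$, which is the structure used throughout (for $\varXi$, the diastasis, the Engli\v{s} operators). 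You instead invoke the real-analytic quotient/slice theorem for the free, proper, real-analytic $\mathbf{T}^1$-action on $X'$, which is more elementary and does not need the line bundle $B$, but it only produces \emph{some} real-analytic structure on $N'=X'/\mathbf{T}^1$ making $\kappa$ real-analytic; declaring that you ``fix'' this structure does not yet prove the paper's statement, which refers to the structure already present on $N'$ via Proposition \ref{prop:I-is-integrable}. Your closing observation is in fact the missing bridge: once you know that $\beta$, $\eta$ and the integrable almost complex structure $I$ are real-analytic for the quotient structure (using real-analytic local sections of the real-analytic submersion $\kappa$, and Lemma \ref{lem:real-analytic-moment-map} for $\Phi$), the real-analytic Newlander--Nirenberg theorem gives holomorphic coordinates for $I$ that are real-analytic in the quotient charts, so the quotient real-analytic structure coincides with the one underlying $(N',I)$ and your statement becomes the paper's. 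You should state this identification explicitly rather than as a by-product; with that addition your argument is complete, and it has the side benefit of reproving Proposition \ref{prop:real-analytic-eta} without Lemma \ref{lem:real-analytic-herm-metric-B}.
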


\begin{proof}
 Since the Hermitian metric $h$ on $A^\vee$ is real-analytic by assumption,
the norm function $\mathcal{N}_A:{A^\vee_0}\rightarrow \mathbb{R}$ is a positive
real-analytic function. Therefore, $X'=\mathcal{N}_A^{-1}(1)\cap {A^\vee_0}'$
is a real-analytic submanifold of ${A^\vee_0}'$ (see \S 2.7 of \cite{parks-krantz}).
On the other hand, we have
$$
\kappa=\widehat{\kappa}\circ \jmath=
\left.\widehat{\kappa}\circ \overline{\Gamma}^{-1}\right|_X.
$$
Thus $\kappa$ is the restriction of an holomorphic map to a real-analytic submanifold,
hence it is real-analytic.
\end{proof}

\begin{prop}
 \label{prop:real-analytic-eta}
The K\"{a}hler form $\eta$ on $N'$ is real-analytic.
\end{prop}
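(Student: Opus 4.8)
The plan is to obtain the statement by collecting the real-analyticity of the objects already constructed and then pushing it down along $\kappa$. By Lemma \ref{lem:kahler-form-N}, $\eta$ is the unique $2$-form on $N'$ with $d\beta=\kappa^*(2\,\eta)$, where $\beta=\alpha/\Phi$; so it suffices to prove that $d\beta$ is a real-analytic $2$-form on $X'$, and then that a form on $X'$ which equals $\kappa^*$ of something is real-analytic only if that something is real-analytic on $N'$.

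First I would check that $\beta$, hence $d\beta$, is real-analytic on $X'$. Since $\omega$ is real-analytic by assumption, $(M,J,2\,\omega)$ is a real-analytic K\"{a}hler manifold whose associated Hamiltonian action $\mu^M$ is holomorphic; Lemma \ref{lem:real-analytic-moment-map} then gives that the moment map $\Phi$ is real-analytic on $M$, and by hypothesis $\Phi>0$, so its pull-back to $X'$ is a positive real-analytic function. Moreover, the Hermitian metric on $A$ being real-analytic (as used in the proof of Corollary \ref{cor:analytic-N-1}), the compatible metric connection is real-analytic, hence so is the connection contact form $\alpha\in\Omega^1(X)$; restricting to the real-analytic submanifold $X'\subseteq A^\vee_0$ of Corollary \ref{cor:analytic-N-1} shows $\alpha|_{X'}$ is real-analytic. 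Therefore $\beta=\alpha/\Phi$ is a real-analytic $1$-form on $X'$, and so is $d\beta$; alternatively this is immediate from the explicit formula (\ref{eqn:differential-alpha-tilde}).

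Next I would descend. By Corollary \ref{cor:analytic-N-1} the map $\kappa:X'\rightarrow N'$ is a real-analytic submersion; hence, by the real-analytic implicit function theorem (see \cite{parks-krantz}), every $n_0\in N'$ has an open neighborhood $U$ carrying a real-analytic local section $s:U\rightarrow X'$ with $\kappa\circ s=\mathrm{id}_U$. Pulling back the identity $\kappa^*(2\,\eta)=d\beta$ along $s$ yields $\eta|_U=\frac12\,s^*(d\beta)$, which is real-analytic as the pull-back of a real-analytic form by a real-analytic map. Since $n_0$ is arbitrary and $\eta$ is already known to be K\"{a}hler (Lemma \ref{lem:kahler-form-N}), this proves $\eta$ is a real-analytic K\"{a}hler form on $N'$.

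The one mildly delicate point is the real-analyticity of $\alpha$, which ultimately rests on that of the Hermitian metric on $A$; if one prefers to avoid invoking it, the same conclusion follows from Remark \ref{rem:real-analytic-equivalence}: composing $\log\mathcal{N}_B$ with the real-analytic equivalence $\varUpsilon:\mathbb{R}_+\times X'\rightarrow {A^\vee_0}'$ gives the map $(t,x)\mapsto 2\log t$, so $\mathcal{N}_B$ is real-analytic on ${A^\vee_0}'$, hence on $B^\vee_0$; since $\eta$ is, up to a constant, the curvature form of the Hermitian holomorphic line bundle $(B,\ell_B)$ over the complex manifold $(N',I)$, it is therefore real-analytic. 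Either way, once $\kappa$ and the relevant ingredients are known to be real-analytic, the remaining argument is routine.
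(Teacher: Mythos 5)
Your argument is correct and follows essentially the same route as the paper: both descend the identity $\kappa^*(2\,\eta)=d\beta$ through a real-analytic local section of $\kappa$ (the paper builds it explicitly from a unitarized, horizontal holomorphic frame of $A^\vee$, while you invoke the real-analytic implicit function theorem for the real-analytic submersion $\kappa$), relying on Lemma \ref{lem:real-analytic-moment-map} for $\Phi$ and on the real-analyticity of the Hermitian metric and connection for $\alpha$, via the formula (\ref{eqn:differential-alpha-tilde}). Your closing alternative through $\mathcal{N}_B$ and the curvature of $(B,\ell_B)$ is precisely the paper's own remark that the Proposition also follows from Lemma \ref{lem:real-analytic-herm-metric-B}.
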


\begin{proof}
 It suffices to prove that for any $n\in N'$ there is a real-analytic
chart for $N'$, defined on an open neighborhood $V\subseteq N'$ of $n$,
such that the local expression of $\eta$ in that chart is real-analytic.
To this end, choose $x\in \kappa^{-1}(n)$ and let $m=:\pi(x)\in M'$.
On some open neighborhood $U\subseteq M$ of $m$, we can find a local holomorphic
frame $\varphi$ on $A^\vee$ such that $\varphi(m)=x$. 
We can also suppose, without loss of generality, that
$\varphi$ is horizontal at $m$, that it, its differential at $m$ maps
isomorphically $T_mM$ to the horizontal tangent space $\mathcal{H}_x$.
If we assume, as we may, that $U$ is the domain of a holomorphic local coordinate
chart $(z_j)$ centered at $m$, 
this means that $\mathcal{N}_A\circ \varphi=h(\varphi,\varphi)=1+O\left(\|z\|^2\right)$.

Let us write $\|\varphi\|=:\sqrt{h(\varphi,\varphi)}$, so that 
$\zeta=:\varphi/\|\varphi\|:U\rightarrow X'
$
is a local unitary frame. Since $h$ is real-analytic, $\zeta$ is real-analytic. 
The previous remark shows, in addition, that
$$
d_m\zeta=d_m\varphi:T_mM\longrightarrow T_xX\subseteq T_xA^\vee;
$$ 
therefore $\zeta$ is also horizontal at $m$,
whence it is transverse at $m$ 
to the $\mu^X$-orbit through $x$, in view of (\ref{eqn:infinitesimal-lift})
and the positivity of $\Phi$. Since the latter orbit is the fiber 
through $x$ of the projection 
$\kappa:X'\rightarrow N'$,
this implies that the composition $\kappa\circ \zeta:U\rightarrow N'$ is a real-analytic
local diffeomorphism at $m$; we have
$\kappa\circ \zeta(m)=\kappa (x)=n$. Therefore, perhaps after replacing $U$ with a smaller open
neighborhood of $m$, we may assume that $\kappa\circ \zeta$ induces a real-analytic
equivalence $U\cong V$, where $V=: \kappa\circ \zeta(U)$ is an open neighborhood
of $n$ (see Theorem 2.5.1 of \cite{parks-krantz}). 
Given the holomorphic chart on $U$, we may then interpret
$\kappa\circ \zeta$ as a real-analytic chart for $N'$ in the neighborhood of $n$.

Let $\theta_\zeta^\vee=i\,\zeta^*(\alpha)$ be the connection form of $A^\vee$ in the local frame
$\zeta$. Then under our assumptions $\theta_\zeta^\vee$ is a real-analytic imaginary 1-form.
The local expression of $2\,\eta$ in this chart, by (\ref{eqn:differential-alpha-tilde}),
is
\begin{eqnarray}
 \label{eqn:local-expression-eta}
(\kappa\circ \zeta)^*(2\eta)&=&\zeta^*\left(\kappa^*(2\,\eta)\right)\nonumber\\
&=&\zeta^*\left(\frac{2}{\Phi}\,\pi^*(\omega)-\frac{1}{\Phi^2}\,d\Phi\wedge \alpha\right)\nonumber\\
&=&\frac{2}{\Phi}\,\omega+\frac{i}{\Phi^2}\,d\Phi\wedge \theta_\zeta^\vee.
\end{eqnarray}
In view of Lemma \ref{lem:real-analytic-moment-map},
we conclude that (\ref{eqn:local-expression-eta})
is real-analytic, and this completes the proof.
\end{proof}

This also follows from the following:

\begin{lem}
 \label{lem:real-analytic-herm-metric-B}
The Hermitian metric $h$ on $B^\vee$ is real-analytic.
\end{lem}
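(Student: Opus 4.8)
The plan is to deduce real-analyticity of $h$ on $B^\vee$ from the real-analyticity already established downstairs on $(M,J,\omega)$, transported through the real-analytic identifications built in \S\ref{subsctn:kahler-stuff}--\S\ref{sctn:circle-bundle-structure}. The key observation is that, via Lemma \ref{lem:natural-isomorphism_AB}, the total space $B^\vee_0$ is biholomorphic to ${A^\vee_0}'$, and under this biholomorphism (Remark \ref{rem:natural-isomorphism_AB}) fiberwise scalar multiplication $\cdot_B$ corresponds to the map $z\bullet\lambda=\widetilde\mu^{A^\vee}_{z^{-1}}(\lambda)$. Hence the norm function $\mathcal{N}_B$ on ${A^\vee_0}'$ is characterized by $\mathcal{N}_B(z\bullet\lambda)=|z|^2\,\mathcal{N}_B(\lambda)$ together with $\mathcal{N}_B\equiv 1$ on $X'$; in other words $\mathcal{N}_B$ is the unique function that is homogeneous of degree $2$ along the $\bullet$-orbits (equivalently, along the fibers of $\widehat\kappa$) and equals $1$ on $X'$. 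Real-analyticity of $h$ on $B^\vee$ is then exactly the statement that $\mathcal{N}_B$, so characterized, is a real-analytic function on the real-analytic manifold ${A^\vee_0}'$ (and that it is positive, nonvanishing off the zero section, which is automatic).

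First I would record that $X'\subseteq A^\vee_0$ is a real-analytic submanifold and $\kappa:X'\to N'$ is real-analytic — this is Corollary \ref{cor:analytic-N-1}, which in turn rests only on the assumed real-analyticity of the metric $h_A$ on $A^\vee$ (hence of $\mathcal{N}_A$) and on Lemma \ref{lem:natural-isomorphism_AB}. Next I would invoke Remark \ref{rem:real-analytic-equivalence}: the product map $\varUpsilon:\mathbb{R}_+\times X'\to {A^\vee_0}'$, $(t,x)\mapsto t\bullet x$, is a real-analytic diffeomorphism, being the restriction of the holomorphic map $\widetilde\mu^{A^\vee}$ to a real-analytic submanifold and having everywhere maximal rank (apply the real-analytic inverse function theorem, Theorem 2.5.1 of \cite{parks-krantz}). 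Now in the real-analytic coordinates $(t,x)$ on ${A^\vee_0}'$ supplied by $\varUpsilon^{-1}$, the homogeneity relation forces
$$
\mathcal{N}_B\big(\varUpsilon(t,x)\big)=\mathcal{N}_B(t\bullet x)=t^2\,\mathcal{N}_B(x)=t^2,
$$
since $x\in X'$ gives $\mathcal{N}_B(x)=1$. Thus $\mathcal{N}_B\circ\varUpsilon$ is the manifestly real-analytic function $(t,x)\mapsto t^2$, and composing with the real-analytic inverse $\varUpsilon^{-1}$ shows $\mathcal{N}_B$ is real-analytic on ${A^\vee_0}'$. Transporting back by the biholomorphism $\overline\Gamma$ of Lemma \ref{lem:natural-isomorphism_AB} (a real-analytic equivalence, being holomorphic), the norm function of $B^\vee_0$ is real-analytic; equivalently $h$ on $B^\vee$ is real-analytic away from the zero section, and real-analyticity across the zero section follows because $\mathcal{N}_B$ extends to the smooth (indeed fiberwise-quadratic) Hermitian norm on all of $B^\vee$, whose local expression in a holomorphic frame is a quadratic form with real-analytic coefficients.

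The main obstacle is not any single estimate but making the chain of identifications airtight: one must be careful that the identification $B^\vee_0\cong{A^\vee_0}'$ is genuinely a real-analytic (not merely smooth) equivalence, and that the characterization of $\mathcal{N}_B$ by its fiberwise homogeneity plus its value on $X'$ is legitimate — this is where Corollary \ref{cor:surjective-product-map} (each $\lambda\in{A^\vee_0}'$ meets $X'$ in a unique $\bullet$-orbit point) does the real work, guaranteeing that the coordinates $(t,x)$ are globally defined on ${A^\vee_0}'$ and hence that the formula $\mathcal{N}_B=t^2$ is valid globally rather than just locally. Everything else is a bookkeeping exercise in composing real-analytic maps; no hard analysis is needed beyond the real-analytic inverse function theorem already cited.
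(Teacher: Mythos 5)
Your argument is correct and follows essentially the same route as the paper: identify $B^\vee_0$ with ${A^\vee_0}'$ via $\overline{\Gamma}$ (Lemma \ref{lem:natural-isomorphism_AB} and Remark \ref{rem:natural-isomorphism_AB}), observe $\mathcal{N}_B(t\bullet x)=t^2$ for $(t,x)\in\mathbb{R}_+\times X'$, and use the real-analytic equivalence $\varUpsilon$ of Remark \ref{rem:real-analytic-equivalence} to conclude that $\mathcal{N}_B$ is real-analytic. Your closing remark about passing from $\mathcal{N}_B$ on $B^\vee_0$ to the metric in a holomorphic frame is a harmless elaboration of what the paper leaves implicit.
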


\begin{proof}
 It suffices to show that the norm function $\mathcal{N}_B:B^\vee_0\rightarrow \mathbb{R}_+$
is real-analytic. To this end, it is equivalent to show that the composition
$\mathcal{N}_B\circ \overline{\Gamma}^{-1}:A^\vee_0\rightarrow \mathbb{R}_+$
is real-analytic. Again, let us simplify our discussion by biholomorphically
identifying $ A^\vee_0$
with $B^\vee_0$, and leaving $\overline{\Gamma}^{-1}$ implicit. Then 
fiberwise scalar multiplication on $B^\vee_0$ corresponds to the map
(\ref{eqn:bullet-action}). Thus if $(t,x)\in \mathbb{R}_+\times X'$ then
$\mathcal{N}_B(t\bullet x)=t^2$.

Let $\varUpsilon:\mathbb{R}_+\times X'\rightarrow {A^\vee_0}'$ be as in
Corollary \ref{cor:surjective-product-map}; then $\varUpsilon$ is a real-analytic
equivalence by Remark \ref{rem:real-analytic-equivalence},
and the previous remark implies that 
$$
\mathcal{N}_B\circ \varUpsilon:\mathbb{R}_+\times X'\rightarrow \mathbb{R}_+
$$
is real-analytic. Therefore so is 
$\mathcal{N}_B=\big(\mathcal{N}_B\circ \varUpsilon\big)
\circ \varUpsilon^{-1}$.

\end{proof}

We can consider the \textit{equivariant distortion function}
$K_k^\mu:M\rightarrow \mathbb{R}$ defined by setting
\begin{equation}
 \label{eqn:equivariant-distortion-fnct}
K_k^\mu(m)=:\Pi^\mu_k(x,x)=\sum_j\,\left|s_j^{(k)}(x)\right|^2,
\end{equation}
for $m\in M$ and any choice of 
$x\in \pi^{-1}(m)\subseteq X$, where the 
$s_j^{(k)}$'s are an orthonormal basis of $H^\mu_k(X)$ (see
(\ref{eqn:kernel-szego-equiv})). That $K_k^\mu$ is well-defined
follows from the fact that $\mu^X$ and $\nu^X$ commute (Lemma 2.1
of \cite{pao-IJM}). For any $m\in M$ and $t\in \mathbf{T}^1$,
given $x\in \pi^{-1}(m)$ by (\ref{eqn:isotypes}) we have
\begin{eqnarray*}
 K_k^\mu\left(\mu^M_{t^{-1}}(m)\right)&=&
\Pi^\mu_k\left(\mu^X_{t^{-1}}(x),\mu^X_{t^{-1}}(x)\right)=
\sum_j\,\left|s_j^{(k)}\left(\mu^X_{t^{-1}}(x)\right)\right|^2\\
&=&\sum_j\,\left|s_j^{(k)}(x)\right|^2=\Pi^\mu_k(x,x)=K_k^\mu(m).
\end{eqnarray*}

Therefore, $K_k^\mu\in \mathcal{C}^\infty(M)^\mu$; it may thus be regarded as a function
on $N'$ in a natural manner. We have in fact:

\begin{lem}
 \label{lem:analytic-distortion-function}
$K_k^\mu\in \mathcal{C}^\varpi(M)^\mu$. As a function on $N'$,
$K_k^\mu\in \mathcal{C}^\varpi(N')^\nu$.
\end{lem}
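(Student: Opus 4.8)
The plan is to show that the equivariant distortion function $K_k^\mu$, which we already know lies in $\mathcal{C}^\infty(M)^\mu\cong \mathcal{C}^\infty(N')^\nu$, is in fact real-analytic by exhibiting it, near any point, as the restriction to a real-analytic submanifold of a function built from holomorphic data. Recall from \S \ref{sctn:circle-bundle-structure} that $H^\mu_k(X)\hookrightarrow \widetilde{H}_k(X')$, the $k$-th isotype for the structure action $\mu^X$ on the circle bundle $\kappa:X'\to N'$. Under the biholomorphism $\overline{\Gamma}:B^\vee_0\cong {A^\vee_0}'$ of Lemma \ref{lem:natural-isomorphism_AB} (and Remark \ref{rem:natural-isomorphism_AB}, which matches fiberwise scalar multiplication on $B^\vee$ with the $\bullet$-action), sections of $H^\mu_k(X)$ correspond to holomorphic sections of $B^{\otimes k}$ on $N'$, because the $\mu^X$-equivariance condition $s(e^{i\theta}\bullet x)=e^{ik\theta}s(x)$ is exactly the condition that $s$ be a section of the $k$-th power of the associated line bundle $B$, and the CR/holomorphic structure on $X'$ inherited from $A^\vee$ agrees with the one coming from the holomorphic structure on $B$ constructed in \S \ref{subsctn:kahler-stuff} (via $\Theta_B=-2i\,\kappa^*(\eta)$, $\eta$ being K\"ahler).

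The key step is then a local Bergman-kernel argument on $N'$. Fix $n_0\in N'$ and a small neighborhood $V$ together with a holomorphic local frame $\sigma$ of $B$ over $V$; write the Hermitian metric on $B$ as $\|\sigma\|^2=e^{-2k\phi}$ locally — but more precisely, use $\mathcal{N}_B$, which by Lemma \ref{lem:real-analytic-herm-metric-B} is real-analytic, so the local K\"ahler potential $\phi$ for the metric on $B$ is real-analytic. With respect to an orthonormal basis $(s_j^{(k)})$ of $H^\mu_k(X)\cong H^0(N',B^{\otimes k})$ (finite-dimensional since $\Phi>0$, by \cite{pao-IJM}), we have $\Pi^\mu_k(x,x)=\sum_j |s_j^{(k)}(x)|^2$; writing $s_j^{(k)}=g_j\cdot \sigma^{\otimes k}$ with $g_j$ holomorphic on $V$, this equals $\big(\sum_j |g_j(n)|^2\big)\cdot \|\sigma(n)\|^{2k} = \big(\sum_j |g_j(n)|^2\big)\cdot e^{-2k\phi(n)}$ where $n=\kappa(x)$. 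The function $n\mapsto \sum_j|g_j(n)|^2$ is the local expression of the Bergman kernel on the diagonal; it is a finite sum of squared absolute values of holomorphic functions, hence real-analytic. Since $\phi$ is real-analytic by Lemma \ref{lem:real-analytic-herm-metric-B}, the product is real-analytic on $V$. Thus $K_k^\mu$, viewed as a function on $N'$, is real-analytic near $n_0$; as $n_0$ was arbitrary, $K_k^\mu\in \mathcal{C}^\varpi(N')^\nu$. Pulling back by the real-analytic map $\kappa\circ\zeta$ (as in the proof of Proposition \ref{prop:real-analytic-eta}) gives $K_k^\mu\in \mathcal{C}^\varpi(M)^\mu$; alternatively, one notes $\Pi^\mu_k(x,x)$ is the restriction to the real-analytic submanifold $X'\subseteq A^\vee_0$ (Corollary \ref{cor:analytic-N-1}) of a function that is a finite sum of $|\text{holomorphic}|^2$ terms composed with the real-analytic norm function, hence real-analytic.

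The main obstacle — really the only nontrivial point — is verifying cleanly that the holomorphic structure on $B$ induced in \S \ref{subsctn:kahler-stuff} is compatible with the CR structure on $X'$ coming from its realization inside $A^\vee_0$, so that $\mu^X$-equivariant elements of the Hardy space $H(X')$ genuinely correspond to \emph{holomorphic} sections of $B^{\otimes k}$ rather than merely smooth ones. This is where Lemma \ref{lem:natural-isomorphism_AB} does the work: $\overline{\Gamma}$ is a biholomorphism identifying $B^\vee_0$ with ${A^\vee_0}'$ and matching horizontal distributions, so the CR structure on $X'=\jmath(X')\subseteq B^\vee_0$ inherited as the unit circle bundle of $B^\vee$ agrees with the one inherited from $A^\vee_0$; consequently $\overline{\partial}_b$-closed functions on $X'$ that are $\mu^X$-homogeneous of degree $k$ are precisely the pullbacks of holomorphic sections of $B^{\otimes k}$. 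Once this identification is in place, the real-analyticity is immediate from the structure of the Bergman kernel together with the real-analyticity of $\mathcal{N}_B$ (Lemma \ref{lem:real-analytic-herm-metric-B}). No delicate estimate is needed; the content is entirely the bookkeeping that translates the equivariant Szeg\"o kernel on $X$ into an honest Bergman kernel on the quotient $N'$.
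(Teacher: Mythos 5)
Your argument is essentially correct, but it takes a heavier route than the paper for the $N'$ statement. The paper does not pass through an identification of $H^\mu_k(X)$ with holomorphic sections of $B^{\otimes k}$ at all: it simply observes (as in \S \ref{sctn:sesqui-holomorphic-extensions}) that every element of $H^\mu_k(X)$ extends to a holomorphic function on $A^\vee_0$, so that $\Pi^\mu_k$ is the restriction of the sesquiholomorphic (hence real-analytic) kernel $\mathcal{P}^\mu_k$ on $A^\vee_0\times A^\vee_0$ to the real-analytic submanifold $X\times X$; composing the diagonal restriction with the real-analytic unitarization $\varphi/\|\varphi\|_A$ of a local holomorphic frame of $A^\vee$ gives $K^\mu_k\in\mathcal{C}^\varpi(M)$, and composing with $\sigma_u=(1/\|\sigma\|_B)\bullet\sigma$ (real-analytic by Lemma \ref{lem:real-analytic-herm-metric-B}) gives the statement on $N'$. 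Your Bergman-kernel picture on the quotient buys a more explicit local formula $\sum_j|g_j|^2\cdot(\text{real-analytic positive factor})^{\pm k}$, but at the cost of having to justify that $\mu^X$-equivariant CR functions correspond to \emph{holomorphic} sections of $B^{\otimes k}$ — the compatibility issue you correctly isolate and settle by Lemma \ref{lem:natural-isomorphism_AB}. The paper's route avoids this bookkeeping entirely, which is why its proof is three lines.

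Two points need repair. First, you assert $H^\mu_k(X)\cong H^0(N',B^{\otimes k})$ and attribute finite-dimensionality to the latter; the paper only provides an algebraic \emph{inclusion} $H^\mu_k(X)\hookrightarrow \widetilde H_k(X')$ (an isomorphism only when $\mathrm{codim}_{\mathbb C}(M\setminus M')\ge 2$), and on the noncompact $N'$ the space $H^0(N',B^{\otimes k})$ need not be finite-dimensional. Only the inclusion direction is needed for your computation, so you should claim only that each $s_j^{(k)}$ yields a holomorphic section over $N'$. Second, and more substantively, the lemma asserts $K^\mu_k\in\mathcal{C}^\varpi(M)$ on \emph{all} of $M$, whereas your main route (through $N'$, then pulling back by $\kappa\circ\zeta$) only reaches $M'$, since $\kappa$ and $\zeta$ are defined over the primed loci. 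Your parenthetical alternative is the right fix, but it must be run on $X=\mathcal{N}_A^{-1}(1)$, not on $X'$: the same argument as in Corollary \ref{cor:analytic-N-1} shows $X$ is a real-analytic submanifold of $A^\vee_0$, the holomorphic extensions $\widetilde{s}_j^{(k)}$ make $\sum_j\widetilde{s}_j^{(k)}\,\overline{\widetilde{s}_j^{(k)}}$ real-analytic on $A^\vee_0\times A^\vee_0$, and composing the diagonal restriction with the real-analytic unitarized frames of $A^\vee$ gives real-analyticity of $K^\mu_k$ on every coordinate patch of $M$ — which is exactly the paper's proof.
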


\begin{proof}
 By its very definition, $\Pi^\mu_k\in \mathcal{C}^\infty(X\times X)$
restricts to a sesquiholomorphic complex function on $A^\vee_0\times A^\vee_0$,
which is then \textit{a fortiori} real-analytic.
Since $X\times X$ is a real-analytic submanifold of $A^\vee_0\times A^\vee_0$
by Corollary \ref{cor:analytic-N-1}, we have
$\Pi^\mu_k\in \mathcal{C}^\varpi(X\times X)$. If now $\varphi$ is a local holomorphic
frame on a open subset $U\subset M$, the unitarization 
$\varphi_u=\varphi/\|\varphi\|_A:U\rightarrow X$ is real-analytic, where
$\|\varphi\|_A=:\big(\mathcal{N}_A\circ \varphi\big)^{1/2}$. Therefore,
$$
K^\mu_k(m)=\Pi^\mu_k\big(\varphi_u(m),\varphi_u(m)\big)
\,\,\,\,\,\,\,\,\,\,\,\,(m\in U)
$$
is real-analytic on $U$. The second statement is proved similarly (Lemma
\ref{lem:real-analytic-herm-metric-B}).
\end{proof}

\subsection{Asymptotics of sesqui-holomorphic extensions}
\label{sctn:sesqui-holomorphic-extensions}

Every $s\in H^\mu_k(X)$ extends uniquely to a holomorphic function
$\widetilde{s}:A^\vee_0\rightarrow \mathbb{C}$. Holomorphicity of the extended
action $\widetilde{\mu}^{A^\vee}$ implies, in view of (\ref{eqn:isotypes})
and (\ref{eqn:bullet-action}), that
for every $(z,\lambda)\in \mathbb{C}^*\times A^\vee_0$ we have
\begin{equation}
 \label{eqn:extension-equivariance}
\widetilde{s}(z\bullet \lambda)=z^k\,\widetilde{s}(\lambda).
\end{equation}

Given this and (\ref{eqn:kernel-szego-equiv}), we see that
$\Pi_k^\mu:X\times X\rightarrow \mathbb{C}$ extends uniquely to a sesquiholomorphic
function $\mathcal{P}^\mu_k:A^\vee_0\times A^\vee_0\rightarrow\mathbb{C}$, given by
\begin{equation}
 \label{eqn:kernel-szego-equiv-extended}
\mathcal{P}^\mu_k\left(\lambda,\lambda'\right)=
\sum_j\,\widetilde{s}_j^{(k)}(\lambda)\cdot \overline{\widetilde{s}_j^{(k)}(\lambda')}
\,\,\,\,\,\,\,\,\,\left(\lambda,\lambda'\in A^\vee_0\right),
\end{equation}
and satisfying, by (\ref{eqn:extension-equivariance}),
\begin{equation}
 \label{eqn:kernel-szego-equiv-extended-bullet}
\mathcal{P}^\mu_k\left(z\bullet\lambda,w\bullet\lambda'\right)=
z^k\,\overline{w}^k\,\mathcal{P}^\mu_k\left(\lambda,\lambda'\right),
\end{equation}
for every $z,\,w\in \mathbb{C}^*$.

Let $\sigma$ be a local holomorphic frame of $B^\vee$ on an open
subset $V\subseteq N'$. 
Then 
$$
\mathcal{P}^\mu_k\circ (\sigma\times \sigma):V\times V\rightarrow \mathbb{C},\,\,\,\,
(n,n')\mapsto \mathcal{P}^\mu_k\big(\sigma(n), \sigma(n')\big)
$$
is sesquiholomorphic. The unitarization
$\sigma_u=:(1/\|\sigma\|_B)\bullet\sigma:V\rightarrow X'$ (see Remark \ref{rem:natural-isomorphism_AB}) 
is a real-analytic section.
Given (\ref{eqn:kernel-szego-equiv-extended-bullet}), we have
\begin{eqnarray}
\label{eqn:key-relation-Pi-P}
 \lefteqn{\Pi^\mu_k\big(\sigma_u(n),\sigma_u(n')\big)=
\mathcal{P}^\mu_k\left(\dfrac{1}{\|\sigma(n)\|_B}\bullet \sigma (n),
\dfrac{1}{\|\sigma(n')\|_B}\bullet \sigma (n')\right)}\\
&=&\dfrac{1}{\|\sigma(n)\|_B^k}\,\dfrac{1}{\|\sigma(n')\|_B^k}\,
\mathcal{P}^\mu_k\left(\sigma (n),
\sigma (n')\right)=e^{-\frac k2\,\big( \varXi(n)+\varXi(n')\big)}\,
\mathcal{P}^\mu_k\left(\sigma (n),
\sigma (n')\right),\nonumber
\end{eqnarray}
where we have set, for $n\in V$,
\begin{equation}
 \label{eqn:kahler-potential-N}
\varXi(n)=\ln \left(\|\sigma(n)\|_B^2\right)=
\ln \Big(\ell_B\Big(\sigma(n),\sigma(n)\big)\Big).
\end{equation}
Then $\varXi$ is real-analytic by Lemma \ref{lem:real-analytic-herm-metric-B},
and furthermore $\partial_N\overline{\partial}_N\varXi=\Theta_B$,
where $\Theta_B=-2i\,\eta\in \Omega^2(N')$ is the curvature form of $B$.
In any given local coordinate chart $(z_k)$ for $N'$
this means that
$$
\dfrac{\partial^2\varXi}{\partial z_k\,\partial z_{\overline{l}}}
={\Theta_B}_{k\overline{l}}=-2i\,\eta_{k\overline{l}}
=2\,h_{k\overline{l}},
$$
where $h$ is the Riemannian metric of $(N',I,\eta)$. 
In other words, $\varXi$ is a K\"{a}hler potential for $2\,h$.

Being real-analytic, $\varXi$ has a unique sesquiholomorphic extension
$\widetilde{\varXi}$ to an open neighborhood of the diagonal 
$\widetilde{V}\subseteq V\times V$.
Similarly, by Lemma \ref{lem:analytic-distortion-function} $K^\mu_k$
also has a unique sesquiholomorphic extension $\widetilde{K^\mu_k}$
to an open neighborhood of the diagonal
in $N'\times N'$. 

\begin{lem}
 \label{lem:relation-sesquiholomorphic-extensions}
Let $\widetilde{V}\subseteq V\times V$ be an appropriate open neighborhood
of the diagonal. Then for every $(n,n')\in \widetilde{V}$ we have
\begin{equation}
 \label{eqn:relation-sesquiholomorphic-extensions}
\mathcal{P}^\mu_k\left(\sigma (n),\sigma (n')\right)=
e^{k\,\widetilde{\varXi}(n,n')}\,\widetilde{K^\mu_k}(n,n').
\end{equation}
\end{lem}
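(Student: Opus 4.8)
The plan is to establish (\ref{eqn:relation-sesquiholomorphic-extensions}) by the standard analytic-continuation principle: both sides are sesquiholomorphic functions of $(n,n')$ on a suitable neighborhood of the diagonal, and they already agree on the diagonal up to a known identity, which suffices once we verify that equality. First I would record that the restriction of (\ref{eqn:relation-sesquiholomorphic-extensions}) to the diagonal $n=n'$ is precisely (\ref{eqn:key-relation-Pi-P}) rewritten. Indeed, by (\ref{eqn:equivariant-distortion-fnct}) and the definition of the unitarization $\sigma_u$, we have $K^\mu_k(n)=\Pi^\mu_k\big(\sigma_u(n),\sigma_u(n)\big)$ for $n\in V$; combining this with (\ref{eqn:key-relation-Pi-P}) at $n'=n$ gives
\begin{equation*}
\mathcal{P}^\mu_k\big(\sigma(n),\sigma(n)\big)=e^{k\,\varXi(n)}\,K^\mu_k(n),
\end{equation*}
which is exactly (\ref{eqn:relation-sesquiholomorphic-extensions}) on the diagonal, since $\widetilde{\varXi}(n,n)=\varXi(n)$ and $\widetilde{K^\mu_k}(n,n)=K^\mu_k(n)$.

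Next I would argue that both sides of (\ref{eqn:relation-sesquiholomorphic-extensions}) are sesquiholomorphic in $(n,n')$ on some neighborhood $\widetilde V\subseteq V\times V$ of the diagonal. The left-hand side $\mathcal{P}^\mu_k\circ(\sigma\times\sigma)$ is sesquiholomorphic on all of $V\times V$, being the composition of the sesquiholomorphic kernel $\mathcal{P}^\mu_k$ (see (\ref{eqn:kernel-szego-equiv-extended})) with the holomorphic local frame $\sigma$ in the first slot and its conjugate in the second. For the right-hand side: $\widetilde{\varXi}$ is, by construction, holomorphic in $n$ and anti-holomorphic in $n'$ on a neighborhood of the diagonal (Lemma \ref{lem:real-analytic-herm-metric-B} guarantees $\varXi$ is real-analytic, so such an extension exists and is unique), and likewise $\widetilde{K^\mu_k}$ is sesquiholomorphic near the diagonal by Lemma \ref{lem:analytic-distortion-function}. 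The exponential $e^{k\widetilde{\varXi}(n,n')}$ is therefore sesquiholomorphic, and so is the product with $\widetilde{K^\mu_k}$. Shrinking $\widetilde V$ if necessary so that both extensions are defined and $\widetilde V$ is connected, both sides are sesquiholomorphic there.

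Finally I would invoke the uniqueness of sesquiholomorphic extensions: a function holomorphic in one set of variables and anti-holomorphic in another, on a connected open neighborhood of the diagonal, is determined by its restriction to the diagonal (this is the same principle underlying the very definition of $\widetilde{\varXi}$ and $\widetilde{K^\mu_k}$, applied here to the difference or ratio of the two sides). Concretely, since $e^{k\widetilde\varXi}$ is nowhere zero, the quotient of the left side by $e^{k\widetilde\varXi}$ is a sesquiholomorphic function near the diagonal agreeing with $\widetilde{K^\mu_k}$ on the diagonal; by the uniqueness of the sesquiholomorphic extension of $K^\mu_k$ it must equal $\widetilde{K^\mu_k}$ on a possibly smaller $\widetilde V$. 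Rearranging gives (\ref{eqn:relation-sesquiholomorphic-extensions}). The only mild subtlety — and the step I would be most careful about — is ensuring all the relevant extensions ($\widetilde\varXi$, $\widetilde{K^\mu_k}$, and the sesquiholomorphic function $\mathcal{P}^\mu_k\circ(\sigma\times\sigma)$ itself) are simultaneously defined on one common connected neighborhood of the diagonal, so that the polarization/uniqueness argument applies; this is a routine matter of intersecting finitely many neighborhoods and shrinking, with no real obstacle.
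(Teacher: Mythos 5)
Your argument is correct and follows the same route as the paper: verify the diagonal identity via (\ref{eqn:key-relation-Pi-P}) together with the definitions of $K^\mu_k$ and $\sigma_u$, and then conclude by sesquiholomorphicity of both sides and uniqueness of sesquiholomorphic extensions near the diagonal. The paper's proof is exactly this, stated more tersely; your extra care about a common connected neighborhood is a harmless elaboration of the same step.
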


\begin{proof}
Both sides being sesquiholomorphic, it suffices to show that they have equal
restrictions on the diagonal. If $n=n'$, by (\ref{eqn:key-relation-Pi-P}) we have
\begin{eqnarray}
 \label{eqn:asymptotic-expansion-P-K-diagonal}
\mathcal{P}^\mu_k\left(\sigma (n),\sigma (n)\right)&=&
e^{k\,\varXi(n)}\,\Pi^\mu_k\big(\sigma_u(n),\sigma_u(n)\big)\\
&=&e^{k\,\widetilde{\varXi}(n,n)}\,K^\mu_k(n)=e^{k\,\widetilde{\varXi}(n,n)}\,
\widetilde{K^\mu_k}(n,n).\nonumber
\end{eqnarray}

\end{proof}

Inserting (\ref{eqn:relation-sesquiholomorphic-extensions}) 
in (\ref{eqn:key-relation-Pi-P}), we obtain for $(n,n')\in \widetilde{V}$:
\begin{equation}
 \label{eqn:key-relation-Pi-P-1}
\Pi^\mu_k\big(\sigma_u(n),\sigma_u(n')\big)=
e^{k\,\big [\widetilde{\varXi}(n,n')-\frac 12\,\varXi(n)-\frac 12\,\varXi(n')\big]}\,
\widetilde{K^\mu_k}(n,n').
\end{equation}

As discussed in the Introduction, by \cite{pao-IJM} if $m\in M'$ and
$x\in \pi^{-1}(m)$ there is an asymptotic expansion
(\ref{eqn:asymptotic-expansion-szego-special-case}), smoothly varying
on $M'$ and uniform on compact subsets of $M'$, with leading coefficient
$S_0^\mu=\Phi^{-(d+1)}$. Since $\Pi^\mu_k(x,x)$ 
is $\mu^M$-invariant, so is every $S_j^\mu$. Therefore, viewing $K^\mu_k$
as being defined on $N'$,
the expansion may be naturally
interpreted as holding on $N'$ (see (\ref{eqn:equivariant-distortion-fnct})
and Lemma \ref{lem:analytic-distortion-function})):
\begin{equation}
 \label{eqn:asymptotic-expansion-K-N}
K^\mu_k(n)\sim \left(\frac k\pi\right)^d\,\sum_{j\ge 0}k^{-j}\,S_j^\mu(n),
\end{equation}
where $S_0^\mu=\Phi^{-(d+1)}$. This suggest, heuristically, that
$\widetilde{K^\mu_k}(n,n')$ should satisfy a similar expansion, 
with coefficients the sesquiholomorphic extensions of the $S_j^\mu$'s.
This is indeed the case. 

To see this, let us consider first the asymptotics
of $\Pi^\mu_k\big(\sigma_u(n),\sigma_u(n')\big)$ for $(n,n')\in \widetilde{V}$.
Let 
\begin{equation}
 \label{eqn:szego-microlocal}
\Pi(x,y)=\int_0^{+\infty}e^{it\,\psi(x,y)}\,s(x,y,t)\,dt.
\end{equation}
be the usual Fourier integral representation of the Szeg\"{o} kernel
of $X$ determined in \cite{boutet-sjostraend}; here we think of $X$ as the 
unit circle bundle of $A^\vee$, with volume form $dV_X$. In particular,
$\Im(\psi)\ge 0$, and 
$s$ is a semiclassical symbol admitting an asymptotic expansion of the form
\begin{equation}
 \label{eqn:szego-microlocal-amplitude}
s(x,y,t)\sim \sum_{j\ge 0}t^{d-j}\,s_j(x,y)
\end{equation}
(see also the discussion 
in \cite{zelditch-theorem-of-Tian} and \cite{sz}).
For some $\epsilon>0$, let
$\varrho_1\in \mathcal{C}^\infty_0(-2\epsilon,2\epsilon)$ be a 
bump function identically equal to $1$ on $(-\epsilon,\epsilon)$.  
For some $C>0$,
let $\varrho_2\in \mathcal{C}^\infty_0\big(1/(2C),2C\big)$ be a bump function
identically equal to $1$ on $(1/C,C)$.
Let us write $\mu^X_{-\vartheta}$ for $\mu^X_{e^{-i\vartheta}}$.
Then, arguing as in the proof of Theorem 1 of 
\cite{pao-IJM},
\begin{eqnarray}
 \label{eqn:equivariant-szego-kernel-asymptotics}
\lefteqn{\Pi^\mu_k\big(\sigma_u(n),\sigma_u(n')\big)=
\dfrac{1}{2\pi}\,\int_{-\pi}^\pi\,e^{-ik\vartheta}\,\Pi\left(
\mu^X_{-\vartheta}\big(\sigma_u(n)\big),\sigma_u(n')\right)\,d\vartheta}\\
&\sim&\dfrac{1}{2\pi}\,\int_0^{+\infty}\int_{-\pi}^\pi\,e^{-ik\vartheta
+it\,\psi\left(
\mu^X_{-\vartheta}\big(\sigma_u(n)\big),\sigma_u(n')\right)}\,
s\left(
\mu^X_{-\vartheta}\big(\sigma_u(n)\big),\sigma_u(n'),t\right)\,\varrho_1(\vartheta)\,
dt\,d\vartheta\nonumber\\
&\sim&\dfrac{k}{2\pi}\,\int_0^{+\infty}\int_{-\pi}^\pi\,e^{ik\,\Psi(n,n',t,\vartheta)}\,
s\Big(
\mu^X_{-\vartheta}\big(\sigma_u(n)\big),\sigma_u(n'),kt\Big)\,\varrho_1(\vartheta)\,
\varrho_2(t)\,dt\,d\vartheta,\nonumber
\end{eqnarray}
where 
$$
\Psi(n,n',t,\vartheta)=:t\,\psi\Big(
\mu^X_{-\vartheta}\big(\sigma_u(n)\big),\sigma_u(n')\Big)-\vartheta.
$$
The last line of (\ref{eqn:equivariant-szego-kernel-asymptotics}) is an
oscillatory integral with phase $\Psi(n,n',t,\vartheta)$, and
$\Im(\psi)\ge 0$ implies $\Im(\Psi)\ge 0$.

Suppose first $n=n'$. Then one can see by (a slight adaptation of) the argument in
the proof Theorem 1 of \cite{pao-IJM} that the phase
$\Psi(n,n,t,\vartheta)$ has a unique stationary point
$P(n,n)=(t_0,\vartheta_0)=\big(1/\Phi(n),0\big)$, where as usual we think
of the invariant function $\Phi$ as descended on $N$.
Since $\psi(x,x)=0$ identically, we have $\Psi(n,n,t_0,\vartheta_0)=0$.
Furthermore, the Hessian matrix at $P_0$ is
$$
H_{P_0}(\Psi)=\left(\begin{matrix}
           0&\Phi(n)\\
\Phi(n)&\partial^2_{\vartheta\vartheta}\Psi(P_0)
          \end{matrix}\right).
$$
Therefore, $P_0$ is a non-degenerate critical point, and
by applying the stationary phase Lemma to it we obtain the
asymptotic expansion (\ref{eqn:asymptotic-expansion-K-N}).

By the theory of \cite{melin-sjostraend}, the stationary point and the 
asymptotic expansion will deform
smoothly with $(n,n')\in \widetilde{V}$, although the stationary point  
may cease to be real
when $n\neq n'$ (and should then be regarded as the stationary point
of an almost analytic extension of $\Psi$). More precisely, if 
$\widetilde{\Psi}\left(\widetilde{n},\widetilde{n}',\widetilde{t},\widetilde{\vartheta}\right)$ denotes
an almost analytic extension of $\Psi\left(n,n',t,\vartheta\right)$,
then the condition that 
$P\big(\widetilde{n},\widetilde{n}'\big)=
\left(\widetilde{t}\big(\widetilde{n},\widetilde{n}'\big),
\widetilde{\vartheta}\big(\widetilde{n},\widetilde{n}'\big)\right)$
be a stationary point of $\widetilde{\Psi}\left(\widetilde{n},\widetilde{n}',\cdot,\cdot\right)$
defines an almost analytic manifold $\left(\widetilde{t},
\widetilde{\vartheta}\right)=
\left(\widetilde{t}\big(\widetilde{n},\widetilde{n}'\big),
\widetilde{\vartheta}\big(\widetilde{n},\widetilde{n}'\big)\right)$. 

Applying to (\ref{eqn:equivariant-szego-kernel-asymptotics})  
the stationary phase Lemma for complex phase functions
from \S 2 of \cite{melin-sjostraend}
 for $(n,n')\in \widetilde{V}$ we obtain a smoothly varying asymptotic expansion
\begin{equation}
 \label{eqn:asymptotic-exp-Pi-n-n'}
\Pi^\mu_k\big(\sigma_u(n),\sigma_u(n')\big)\sim
\left(\frac k\pi\right)^d\,e^{ik\,\widetilde{\Psi}\big(n,n',P(n,n')\big)}\,
\,\sum_{j\ge 0}k^{-j}\,S_j(n,n'),
\end{equation}
for appropriate smooth functions $S_j(\cdot,\cdot)$ on $\widetilde{V}\subseteq V\times V$.

Given (\ref{eqn:key-relation-Pi-P}) and (\ref{eqn:asymptotic-exp-Pi-n-n'}), we get
\begin{eqnarray}
 \label{eqn:asympt-exp-P-k-n-n'}
\lefteqn{\mathcal{P}^\mu_k\left(\sigma (n),\sigma (n')\right)}\\
&\sim&e^{k\left[\frac 12\,\big( \varXi(n)+\varXi(n')\big)+i\,\widetilde{\Psi}\big(n,n',P(n,n')\big)\right]}\,
\left(\frac k\pi\right)^d\,\sum_{j\ge 0}k^{-j}\,S_j(n,n').\nonumber
\end{eqnarray}

Since the expansion holds in $\mathcal{C}^j$-norm for every $j$
and the left hand side is sesquiholomorphic, so is every term on the right hand side.
Therefore, each term
$$
e^{k\left[\frac 12\,\big( \varXi(n)+\varXi(n')\big)+i\,\widetilde{\Psi}\big(n,n',P(n,n')\big)\right]}\,
S_j(n,n')
$$
is the sesquiholomorphic extension of its diagonal restriction.

On the other hand, on the diagonal (\ref{eqn:asympt-exp-P-k-n-n'}) restricts to the
uniquely determined asymptotic expansion for 
(\ref{eqn:asymptotic-expansion-P-K-diagonal}),
and so we need to have
$S_j(n,n)=S_j^\mu(n)$, whence $S_j(n,n')=\widetilde{S_j^\mu}(n,n')$.
Furthermore, 
we see that
$$
\frac 12\,\big( \varXi(n)+\varXi(n')\big)+i\,\widetilde{\Psi}\big(n,n',P(n,n')\big)=
\widetilde{\varXi}(n,n').$$
Inserting this in (\ref{eqn:asymptotic-exp-Pi-n-n'}), we obtain
\begin{eqnarray}
 \label{eqn:asymptotic-expansion-Pi-k-n-n'-1}
\Pi^\mu_k\big(\sigma_u(n),\sigma_u(n')\big)&\sim&
\left(\frac k\pi\right)^d\,e^{k\,\left[
\widetilde{\varXi}(n,n')-\frac 12\,\big( \varXi(n)+\varXi(n')\big)\right]
}\nonumber\\
&&\cdot\sum_{j\ge 0}k^{-j}\,\widetilde{S^\mu_j}(n,n').
\end{eqnarray}
Now (\ref{eqn:key-relation-Pi-P-1}) and (\ref{eqn:key-relation-Pi-P})
imply
\begin{equation}
 \label{eqn:asymptotic-expansion-K-k-mu}
\widetilde{K^\mu_k}(n,n')\sim \left(\frac k\pi\right)^d\,\sum_{j\ge 0}k^{-j}\,\widetilde{S^\mu_j}(n,n')
\end{equation}
(see \cite{zelditch-theorem-of-Tian} and \cite{karabegov-schlichenmaier} for analogues 
in the standard case $\Phi=1$ ).

Analogous considerations hold for Toeplitz operators; see \S \ref{sctn:proof-heisenberg}.

\subsection{The Laplacian on invariant functions}
\label{sct:laplacian-invariant-functions}

Let us now dwell on the relation between the Laplacian operators
$\Delta_N$ and $\Delta_M$ of $(M,J,\omega)$ and $(N',I,\eta)$ 
acting on invariant functions. Thus let $f\in \mathcal{C}^\infty(M)^\mu$,
so that $f$ determines in a natural manner functions on $X$ and $N'$, respectively.
It is convenient in the present argument to explicitly distinguish the domain
of definition of the function in point, so we shall write $f=f_M$, and $f_X$ and $f_N$
to denote the induced functions on $X$ and $N'$, respectively.
It is also notationally convenient to leave $\overline{\Gamma}$ implicit,
and to identify $B^\vee_0$ with ${A^\vee_0}'$ (see Lemma \ref{lem:natural-isomorphism_AB}).
Thus we have holomorphic line bundle structures $\widehat{\pi}:A^\vee\rightarrow M$
and $\widehat{\kappa}:A^\vee\rightarrow N'$, where we write $\widehat{\kappa}$
for $\widehat{\kappa}\circ \overline{\Gamma}^{-1}$. The fibers of the latter are the orbits
of the complexified action $\widetilde{\mu}^{A^\vee}$.

Suppose $m\in M'$, $x\in \pi^{-1}(m)\in X\subset A^\vee_0$, and set $n=:\kappa(x)$.
Choose a local holomorphic frame $\varphi$ for $A^\vee$ on an open neighborhood
$U\subset M'$ of $m$, such that $\varphi(m)=x$ and which is horizontal at $m$, in the
sense of the proof of Proposition \ref{prop:real-analytic-eta}.
Then, as remarked in the same proof, $\varphi:U\rightarrow A^\vee$ is transverse at $m$
to the orbit of $\mu^X$ through $x$. 
In fact, in view of (\ref{eqn:second-infinitesimal-generator-line-bundle}),
$\varphi:U\rightarrow A^\vee$ is transverse at $m$
to the full orbit of $\widetilde{\mu}^{A^\vee}$ through $x$. 
Thus, the composition $\widehat{\kappa}\circ \varphi:U\rightarrow N'$
is holomorphic,
has maximal rank at $m$, and satisfies $\widehat{\kappa}\circ \varphi(m)=n$.
Therefore, there exists an open neighborhood $U\subseteq M'$ of $m$ such that
$V=:\widehat{\kappa}\circ \varphi(U)$ is open, and the induced map 
$\widehat{\kappa}\circ \varphi:U\rightarrow N'$ is a biholomorphism.

Let us set $Z=:\varphi(U)\subseteq A^\vee_0$.
Then $Z$ is a complex submanifold of $A^\vee_0$, and the restrictions of
$\widehat{\pi}$ and $\widehat{\kappa}$ to $Z$ determine biholomorphic maps
$\pi_Z:Z\rightarrow U$ and $\kappa_Z:Z\rightarrow V$.
The invariance hypothesis on $f$ implies that $f_M\circ \pi_Z=f_N\circ \kappa_Z$;
let us write $f_Z$ for this function. 

Furthermore, if $K$ is the complex structure on $Z$ then by holomorphicity
we can pull back the K\"{a}hler structures $(M,J,\omega)$ and 
$(N,I,\eta)$ under $\pi_Z$ and $\kappa_Z$,
respectively, to K\"{a}hler structures  $(Z,K,\omega')$ and $(Z,K,\eta')$.
Clearly 
$$
\Delta_M(f_M)\circ \pi_Z=\Delta_1(f_Z),\,\,\,\,\,\,\,\,\,
\Delta_N(f_N)\circ \kappa_Z=\Delta_2(f_Z),
$$
where $\Delta_1$ and $\Delta_2$ are the Laplacian operators
in the K\"{a}hler structures  $(Z,K,2\omega')$,
and $(Z,K,2\eta')$, respectively. Therefore,
\begin{equation}
 \label{eqn:laplacian-comparison-projection}
\Delta_M(f_M)(m)=\Delta_1(f_Z)(x),\,\,\,\,\,\,\,\,\,
\Delta_N(f_N)(n)=\Delta_2(f_Z)(x).
\end{equation}

Recall that $g(\cdot,\cdot)=\omega\big(\cdot,J(\cdot)\big)$ and
$h=\eta\big(\cdot,I(\cdot)\big)$ are the Riemannian metrics on $(M,J,\omega)$ and 
$(N,I,\eta)$, and by pull-back
view them as the Riemannian
metrics of $(Z,K,\omega')$ and $(Z,K,\eta')$, respectively.
Perhaps after restricting $U$ to a smaller open neighborhood of $m$ in $M'$,
we may assume without loss that on $Z$ there is a global
holomorphic coordinate chart $(z_j)$.
Let $g_{a\overline{b}}=g\big(\partial_a,\partial_{\overline{b}})$ and
$h_{a\overline{b}}=h\big(\partial_a,\partial_{\overline{b}})$
the respective covariant metric tensors, with associated contravariant 
tensors $\big(g^{\overline{b}a}\big)$ and $\big(h^{\overline{b}a}\big)$.

In particular,
$(T_xZ,K_x,\omega'_x)=(\mathcal{H}_x,J_{\mathcal{H},x},\omega_x)$,
where $\omega_x$ is $\omega_m$ pulled-back to $\mathcal{H}_x$ under $d_x\pi$.
Similarly, with the same abuse of language, 
$(T_xZ,K_x,\eta'_x)=(\mathcal{H}_x,J_{\mathcal{H},x},\eta_x)$.
By horizontality, expression
(\ref{eqn:differential-alpha-tilde}) for $\kappa^*(2\eta)$
implies that $\eta_x=\omega_x/\Phi(m)$.
Hence $h_{a\overline{b}}(x)=g_{a\overline{b}}(x)/\Phi(m)$, and so
$h^{\overline{b}a}(x)=\Phi(m)\,g^{\overline{b}a}(x)$. Thus
we conclude that
\begin{eqnarray}
 \label{eqn:laplacian-comparison}
\Delta_2(f_Z)(x)&=&\frac 12\,h^{\overline{b}a}(x)
\,\partial_a\,\partial_{\overline{b}}f_Z(x)\nonumber\\
&=&\frac{1}{2}\,\Phi(m)\,g^{\overline{b}a}(x)\,
\partial_a\,\partial_{\overline{b}}f_Z(x)=\Phi(m)\,\Delta_1(f_Z)(x).
\end{eqnarray}

Given (\ref{eqn:laplacian-comparison-projection}) and (\ref{eqn:laplacian-comparison}),
we conclude that
\begin{equation}
 \label{eqn:laplacian-comparison-final}
\Delta_N(f_N)(n)=\Phi(m)\,\Delta_M(f_M)(m).
\end{equation}

Interpreting $\Delta_M$ and $\Delta_N$ as endomorphisms of $\mathcal{C}^\infty(M')^\mu$,
we can restate (\ref{eqn:laplacian-comparison-final}) by writing
\begin{equation}
 \label{eqn:laplacian-comparison-final-global}
\Delta_N=\Phi\cdot \Delta_M.
\end{equation}

\subsection{$\mu$-adapted Heisenberg local coordinates}
\label{sctn:adapted-HLC}
As mentioned in the Introduction, Heisenberg local coordinates (HLC)
for $X$ centered at some $x\in X$ where defined
in \cite{sz}; it is in these local coordinates that
near-diagonal Szeg\"{o} kernel scaling asymptotics exhibit their universal nature.
While we refer to \cite{sz} for a detailed discussion,
let us recall that they consist in the choice of an adapted local coordinate
chart for $M$ centered at $m=\pi(x)$, interwining the unitary structure
on $T_{m}M$ with the standard one on $\mathbb{C}^d$, and a preferred local frame of $A^\vee$
on a neighborhood of $m$,
having a prescribed second order jet at $m$. 

Let $\mathfrak{x}:(-\pi,\pi)\times B_{2d}(\mathbf{0},\epsilon)\rightarrow X$,
$\mathfrak{x}(\theta,\mathbf{v})=x+(\theta,\mathbf{v})$, be
a system of HLC centered at $x$. Then
$\mathfrak{x}^*(dV_X)(\theta,\mathbf{0})=
(2\pi)^{-1}\,|d\theta|\, d\mathcal{L}(\mathbf{v})$,
where $d\mathcal{L}(\mathbf{v})$ is the Lebesgue measure
on $\mathbf{R}^{2d}$. For $\mathbf{v}\in B_{2d}(\mathbf{0},\epsilon)$,
let us set $x+\mathbf{v}=:\mathfrak{x}(0,\mathbf{v})$.

It is natural here to modify the previous prescription so as to incorporate $\mu^X$
into an \lq equivariant\rq\, HLC system. Namely, 
let us define 
$\mathfrak{y}':\mathbf{T}^1\times B_{2d}(\mathbf{0},\epsilon)\rightarrow
X$ by letting
\begin{equation}
 \label{eqn:adapted-HLC-0}
\mathfrak{y}'\left(e^{i\vartheta},\mathbf{w}\right)=:e^{i\vartheta}\bullet (x+\mathbf{w}).
\end{equation}
Working in coordinates on $\mathbf{T}^1$, this yields a map
$\mathfrak{y}:(-\pi,\pi)\times B_{2d}(\mathbf{0},\epsilon)\rightarrow
X$ by setting
\begin{equation}
 \label{eqn:adapted-HLC}
\mathfrak{y}\big(\vartheta,\mathbf{w})=:e^{i\vartheta}\bullet (x+\mathbf{w}).
\end{equation}
If $\mathbf{H}(m)\in \mathbf{R}^{2d}$ is the local coordinate expression
of $\xi_M(m)\in T_mM$ (viewed as a column vector) then the local HLC expression
of $\xi_X(x)$ is $\big(\mathbf{H}(m),-\Phi(m)\big)\in \mathbb{R}^{2d}\times \mathbb{R}$.
If $(\theta,\mathbf{v})\sim (0,\mathbf{0})$, then 
by (\ref{eqn:infinitesimal-lift}) 
\begin{equation}
 \label{eqn:local-coordinate-expression-eta}
\mathfrak{y}\big(\vartheta,\mathbf{w})=
x+\big(\mathbf{w}+\vartheta\,\mathbf{H}(m),-\vartheta\,\Phi(m)\big)+
O\left(\|(\mathbf{w},\vartheta)\|^2\right).
\end{equation}
The Jacobian matrix at the origin of $\mathfrak{x}^{-1}\circ \mathfrak{y}$ is then
\begin{equation}
 \label{eqn:jacobian-HLC-change}
\mathrm{Jac}_{(0,\mathbf{0})}\left(\mathfrak{x}^{-1}\circ \mathfrak{y}\right)=
\left(\begin{matrix}
       I_{2d}&\mathbf{H}_m\\
\mathbf{0}^t&-\Phi(m)
      \end{matrix}\right).
\end{equation}

Since $\Phi>0$, $\mathfrak{y}'$ is a local diffeomorphism at 
$(1,\mathbf{0})\in \mathbf{T}^1\times B_{2d}(\mathbf{0},\epsilon)$.
Therefore, if 
$T_\delta=:\left\{e^{i\vartheta}:-\delta<\vartheta<\delta\right\}\subseteq \mathbf{T}^1$
then for all sufficiently
small $\delta,\,\epsilon>0$,
the restriction of $\mathfrak{y}'$
to $T_\delta\times B_{2d}(\mathbf{0},\epsilon)$
is a diffeomorphism onto its image.

\begin{lem}
 \label{lem:injective-adapted-HLC}
Suppose $x\in X'$.
Then, for all sufficiently small $\epsilon>0$, the restriction 
$\mathfrak{y}':\mathbf{T}^1\times B_{2d}(\mathbf{0},\epsilon)\rightarrow X$ is injective.
Its image is a $\mu^X$-invariant tubular neighborhood of the $\mu^X$-orbit of
$x$.
\end{lem}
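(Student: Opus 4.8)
The plan is to combine the local information already recorded just before the statement — that for $\delta,\epsilon$ small $\mathfrak{y}'$ restricts to a diffeomorphism of $T_\delta\times B_{2d}(\mathbf{0},\epsilon)$ onto its image — with the fact that, since $x\in X'$ and $T_{\mathrm{gen}}$ is trivial, the action $\mu^X$ is free at $x$; hence the orbit $\mathbf{T}^1\cdot x$ is an embedded circle and $\mathrm{dist}_X(g\bullet x,x)>0$ for every $g\neq 1$. The first point to notice is that, $\mathbf{T}^1$ being abelian, $\bullet$ is a genuine left action of $\mathbf{T}^1$ on $X$ and $\mathfrak{y}'$ is equivariant for it: $\mathfrak{y}'(hg,\mathbf{w})=h\bullet\mathfrak{y}'(g,\mathbf{w})$. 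Consequently, if $\mathfrak{y}'(g_1,\mathbf{w}_1)=\mathfrak{y}'(g_2,\mathbf{w}_2)$ then, applying $g_2^{-1}\bullet$, we get $(g_2^{-1}g_1)\bullet(x+\mathbf{w}_1)=x+\mathbf{w}_2$; thus injectivity of $\mathfrak{y}'$ on $\mathbf{T}^1\times B_{2d}(\mathbf{0},\epsilon)$ is equivalent to the assertion that, for $\mathbf{w}_1,\mathbf{w}_2\in B_{2d}(\mathbf{0},\epsilon)$, the equation $g\bullet(x+\mathbf{w}_1)=x+\mathbf{w}_2$ forces $g=1$ and $\mathbf{w}_1=\mathbf{w}_2$.

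To prove the latter, fix $\delta>0$ small enough that the discussion preceding the Lemma applies, and a $\epsilon_0>0$ for which $\mathfrak{y}'$ is defined, and distinguish two cases. If $g\notin T_\delta$, consider the continuous function $(g,\mathbf{w}_1,\mathbf{w}_2)\mapsto \mathrm{dist}_X\big(g\bullet(x+\mathbf{w}_1),\,x+\mathbf{w}_2\big)$ on the compact set $(\mathbf{T}^1\setminus T_\delta)\times\overline{B}_{2d}(\mathbf{0},\epsilon_0)\times\overline{B}_{2d}(\mathbf{0},\epsilon_0)$; on the slice $\mathbf{w}_1=\mathbf{w}_2=\mathbf{0}$ it equals $\mathrm{dist}_X(g\bullet x,x)$, which is bounded below by a positive constant on the compact set $\mathbf{T}^1\setminus T_\delta$ (this is exactly where the triviality of the stabilizer enters). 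By uniform continuity it therefore stays positive, hence nonzero, for all $g\notin T_\delta$ once $\epsilon$ is small enough, so this case cannot occur. If instead $g\in T_\delta$, then both $(g,\mathbf{w}_1)$ and $(1,\mathbf{w}_2)$ lie in $T_\delta\times B_{2d}(\mathbf{0},\epsilon)$ and have the same image under $\mathfrak{y}'$, which is injective there; hence $g=1$ and $\mathbf{w}_1=\mathbf{w}_2$. This establishes injectivity.

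It remains to analyse the image. For $\epsilon$ small, $\mathfrak{y}'$ is a local diffeomorphism at every point of $\mathbf{T}^1\times B_{2d}(\mathbf{0},\epsilon)$: the set of $\mathbf{w}$ at which $d_{(1,\mathbf{w})}\mathfrak{y}'$ is invertible is open and contains $\mathbf{0}$ (where invertibility holds by (\ref{eqn:jacobian-HLC-change}) and $\Phi>0$), and the equivariance identity transports invertibility at $(1,\mathbf{w})$ to every $(g,\mathbf{w})$. Hence, after shrinking $\epsilon$ once more, $\mathfrak{y}'\colon\mathbf{T}^1\times B_{2d}(\mathbf{0},\epsilon)\to X$ is an injective local diffeomorphism, so a diffeomorphism onto the open subset $\mathcal{U}=:\mathfrak{y}'\big(\mathbf{T}^1\times B_{2d}(\mathbf{0},\epsilon)\big)$. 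The equivariance of $\mathfrak{y}'$ shows that $\mathcal{U}$ is $\mu^X$-invariant, while $\mathfrak{y}'$ carries $\mathbf{T}^1\times\{\mathbf{0}\}$ diffeomorphically onto the orbit $\mathbf{T}^1\cdot x$ and each slice $\{e^{i\vartheta}\}\times B_{2d}(\mathbf{0},\epsilon)$ onto a translated transversal (transversality of $\mathbf{w}\mapsto x+\mathbf{w}$ to the orbit being (\ref{eqn:bound-orbit-distance-HLC-first})); thus $\mathcal{U}$ is a $\mu^X$-equivariant tubular neighbourhood of that orbit. The only genuine obstacle is the compactness estimate of the second paragraph, which rules out the tube ``wrapping around'' and meeting the central slice away from $\vartheta\sim 0$; everything else is a soft combination of the local diffeomorphism property with equivariance.
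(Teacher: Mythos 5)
Your proof is correct and follows essentially the same route as the paper's: identifications by group elements in $T_\delta$ are ruled out by the already-established injectivity of $\mathfrak{y}'$ on $T_\delta\times B_{2d}(\mathbf{0},\epsilon)$, while elements of $\mathbf{T}^1\setminus T_\delta$ are excluded by compactness of that set together with freeness of the action at $x\in X'$ --- the paper phrases this as a sequential contradiction with $\epsilon_j\rightarrow 0^+$, and you phrase it as a uniform-continuity lower bound on $\mathrm{dist}_X\big(g\bullet(x+\mathbf{w}_1),x+\mathbf{w}_2\big)$, which is the same idea. Your explicit treatment of the image (injective local diffeomorphism via equivariance, hence a $\mu^X$-invariant tubular neighbourhood of the orbit) simply spells out what the paper records in the remark immediately following the lemma.
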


\begin{proof}
If not, there exists a sequence $\epsilon_j\rightarrow 0^+$ and
for every $j$ a choice of distinct pairs
$$
\left(e^{i\vartheta_j},\mathbf{w}_j\right),\,\,\,\, 
\left(e^{i\vartheta'_j},\mathbf{w}'_j\right)\in \mathbf{T}^1\times B_{2d}(\mathbf{0},\epsilon_j),
$$ and 
such that, if 
$\lambda_j=:\vartheta_j'-\vartheta_j$,
\begin{equation}
 \label{eqn:paradox-injective}
e^{i\vartheta_j}\bullet (x+\mathbf{w}_j)=e^{i\vartheta_j'}\bullet (x+\mathbf{w}'_j)\,\,\,\,\,\,
\Longrightarrow\,\,\,\,\,\,x+\mathbf{w}_j=e^{i\lambda_j}\bullet (x+\mathbf{w}'_j).
\end{equation}
If $e^{i\lambda_j}\in T_\delta$, the previous considerations imply
that $e^{i\lambda_j}=1$, whence $e^{i\vartheta_j}=e^{i\vartheta'_j}$, 
and $\mathbf{w}_j=\mathbf{w}'_j$, against the assumptions. 
Therefore, it follows from (\ref{eqn:paradox-injective}) that
$e^{i\lambda_j}\in \mathbf{T}^1\setminus T_\delta$, a compact subset of $\mathbf{T}^1$.
Perhaps after passing to a subsequence, we may therefore assume without loss that
$e^{i\lambda_j}\rightarrow e^{i\lambda_\infty}
\in \mathbf{T}^1\setminus T_\delta$ as $j\rightarrow +\infty$. 
Since obviously 
$x+\mathbf{w}_j,\,x+\mathbf{w}_j'\rightarrow x$ as $j\rightarrow +\infty$, passing to the limit
in (\ref{eqn:paradox-injective}) we obtain $e^{i\lambda_\infty}\bullet x=x$. But this is absurd
by definition of $X'$,
given that $e^{i\lambda_\infty}\neq 1$ and $x\in X'$.

\end{proof}

It follows easily that if $x\in X'$ then
$\mathfrak{y}':\mathbf{T}^1\times B_{2d}(\mathbf{0},\epsilon)\rightarrow X$ 
is a diffeomorphism onto its image for all sufficiently small $\epsilon>0$, and therefore that
$\mathfrak{y}:(-\pi,\pi)\times B_{2d}(\mathbf{0},\epsilon)\rightarrow
X$ is a local coordinate chart. We shall say that $\eta$ is a system of $\mu$-adapted HLC.

In general, $\mathfrak{y}':\mathbf{T}^1\times B_{2d}(\mathbf{0},\epsilon)\rightarrow X$
is a $l:1$-covering, where $l=|T_m|$. To see this, let us consider the following generalization
of Lemma \ref{lem:injective-adapted-HLC}:

\begin{lem}
 \label{lem:cover-adapted-HLC}
Suppose $l=|T_m|$, where $m=\pi(x)$. Then, for all sufficiently small $\epsilon>0$, the restriction 
$\mathfrak{y}':\mathbf{T}^1\times B_{2d}(\mathbf{0},\epsilon)\rightarrow X$ is 
an $l:1$-covering. Its image is a $\mu^X$-invariant tubular neighborhood of the $\mu^X$-orbit of
$x$.
\end{lem}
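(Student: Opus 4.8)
The plan is to realize $\mathfrak{y}'$ as the composition of a free, properly discontinuous action of the finite group $T_m$ with a diffeomorphism onto an open set; the argument extends that of Lemma \ref{lem:injective-adapted-HLC}, which is the case $l=1$. Since $\Phi>0$, local freeness of $\mu^X$ forces $T_m=\{t\in\mathbf{T}^1:t\bullet x=x\}$ to be a cyclic group of order $l$. First I would fix $\delta>0$ so small that $T_\delta\cap T_m=\{1\}$ and, as in the discussion preceding Lemma \ref{lem:injective-adapted-HLC}, so that the restriction of $\mathfrak{y}'$ to $T_\delta\times B_{2d}(\mathbf{0},\epsilon)$ is a diffeomorphism onto an open set $W\ni x$ for all small $\epsilon$. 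Since $T_m$ is finite and each $\tau\in T_m$ fixes $x$, after shrinking $\epsilon$ we may also assume $\tau\bullet(x+\mathbf{w})\in W$ for every $\tau\in T_m$ and $\mathbf{w}\in B_{2d}(\mathbf{0},\epsilon)$; then by the implicit function theorem applied to $\mathfrak{y}'|_{T_\delta\times B_{2d}(\mathbf{0},\epsilon)}$ we may write $\tau\bullet(x+\mathbf{w})=\mathfrak{y}'\bigl(e^{ic_\tau(\mathbf{w})},\mathbf{r}_\tau(\mathbf{w})\bigr)$ for smooth functions $c_\tau,\mathbf{r}_\tau$ with $c_\tau(\mathbf{0})=0$, $\mathbf{r}_\tau(\mathbf{0})=\mathbf{0}$. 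Using that $\mathfrak{y}'$ is $\mathbf{T}^1$-equivariant in its first slot (from (\ref{eqn:adapted-HLC-0}) and associativity of $\bullet$), one checks that, for $\tau=e^{i\alpha}\in T_m$,
$$
\Lambda_\tau\bigl(e^{i\vartheta},\mathbf{w}\bigr)=:\bigl(e^{i(\vartheta-\alpha+c_\tau(\mathbf{w}))},\mathbf{r}_\tau(\mathbf{w})\bigr)
$$
defines a smooth self-map of $\mathbf{T}^1\times B_{2d}(\mathbf{0},\epsilon)$ with $\mathfrak{y}'\circ\Lambda_\tau=\mathfrak{y}'$.

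Next I would show that $\tau\mapsto\Lambda_\tau$ is an injective group homomorphism from $T_m$ into the diffeomorphism group of $\mathbf{T}^1\times B_{2d}(\mathbf{0},\epsilon)$. Since $\mathfrak{y}'$ is a local diffeomorphism with connected domain, a continuous self-map $h$ satisfying $\mathfrak{y}'\circ h=\mathfrak{y}'$ is determined by its value at one point; as $\Lambda_\tau\circ\Lambda_{\tau'}$ and $\Lambda_{\tau\tau'}$ both send $(1,\mathbf{0})$ to $\bigl((\tau\tau')^{-1},\mathbf{0}\bigr)$, one gets $\Lambda_\tau\circ\Lambda_{\tau'}=\Lambda_{\tau\tau'}$ and $\Lambda_1=\mathrm{id}$, so each $\Lambda_\tau$ is a diffeomorphism, and the resulting $T_m$-action is free because $\Lambda_\tau(1,\mathbf{0})=(\tau^{-1},\mathbf{0})$. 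A free action of a finite group on a manifold being properly discontinuous, the quotient projection $q\colon\mathbf{T}^1\times B_{2d}(\mathbf{0},\epsilon)\to Q$ is an $l:1$-covering, and $\mathfrak{y}'$ descends to a local diffeomorphism $\overline{\mathfrak{y}}'\colon Q\to X$ with $\mathfrak{y}'=\overline{\mathfrak{y}}'\circ q$.

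The heart of the proof — and the step I expect to be the main obstacle — is to show that $\overline{\mathfrak{y}}'$ is injective once $\epsilon$ is small, i.e. that the only coincidences of $\mathfrak{y}'$ are $T_m$-translates; this is the precise point where one must rule out "spurious" collapsing of the orbit direction against the slice. I would argue by the same contradiction-and-compactness scheme as in Lemma \ref{lem:injective-adapted-HLC}: a failure produces $\epsilon_j\to0^+$ and distinct, non-$T_m$-related pairs $(e^{i\vartheta_j},\mathbf{w}_j),(e^{i\vartheta_j'},\mathbf{w}_j')$ in $\mathbf{T}^1\times B_{2d}(\mathbf{0},\epsilon_j)$ with the same image, so that $x+\mathbf{w}_j=e^{i\lambda_j}\bullet(x+\mathbf{w}_j')$ with $\lambda_j:=\vartheta_j'-\vartheta_j$. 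If $e^{i\lambda_j}\in T_\delta$, injectivity of $\mathfrak{y}'$ on $T_\delta\times B_{2d}(\mathbf{0},\epsilon_j)$ gives $e^{i\lambda_j}=1$ and $\mathbf{w}_j=\mathbf{w}_j'$, contradicting distinctness; hence $e^{i\lambda_j}\in\mathbf{T}^1\setminus T_\delta$, a compact set, and along a subsequence $e^{i\lambda_j}\to\tau$ with $\tau\bullet x=x$ by continuity, so $\tau\in T_m$. Writing $e^{i\lambda_j}=e^{i\mu_j}\tau$ with $\mu_j\to0$ and unravelling the factorization of $\tau\bullet(x+\mathbf{w}_j')$ recorded above, injectivity of $\mathfrak{y}'$ near $(1,\mathbf{0})$ yields $(e^{i\vartheta_j},\mathbf{w}_j)=\Lambda_\tau(e^{i\vartheta_j'},\mathbf{w}_j')$ for all large $j$, so the two pairs are $T_m$-related after all — the desired contradiction.

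Granting the injectivity of $\overline{\mathfrak{y}}'$, it is an injective local diffeomorphism, hence a diffeomorphism onto its open image, so $\mathfrak{y}'=\overline{\mathfrak{y}}'\circ q$ is an $l:1$-covering onto that image. Finally, the image equals $\bigcup_{\vartheta}e^{i\vartheta}\bullet\bigl(x+B_{2d}(\mathbf{0},\epsilon)\bigr)$: it is open (image of a local diffeomorphism), it contains the $\mu^X$-orbit $\mathbf{T}^1\cdot x$ (set $\mathbf{w}=\mathbf{0}$), and it is $\mu^X$-invariant by the equivariance of $\mathfrak{y}'$ in its first argument; hence it is a $\mu^X$-invariant tubular neighborhood of $\mathbf{T}^1\cdot x$, as claimed.
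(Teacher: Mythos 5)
Your argument is correct in substance, but it is organized differently from the paper's. The paper proves the lemma by a direct sheet count: each point of the orbit has the $l$ preimages $\left(e^{i\vartheta_0}g,\mathbf{0}\right)$, $g\in T_m$, at each of which $\mathfrak{y}'$ is a local diffeomorphism, so points near the orbit have at least $l$ preimages; a compactness argument (extracting from a putative $(l+1)$-fold coincidence $l$ distinct limits in $T_m\setminus T_\delta$, which together with $1$ would give $|T_m|\ge l+1$) shows there are exactly $l$, whence the covering property. You instead exhibit the covering structurally: you lift $T_m$ to explicit deck transformations $\Lambda_\tau$ of $\mathbf{T}^1\times B_{2d}(\mathbf{0},\epsilon)$, factor $\mathfrak{y}'$ through the quotient by this action, and prove the descended map injective by the same compactness-and-limit scheme as in Lemma \ref{lem:injective-adapted-HLC}; your identification $(e^{i\vartheta_j},\mathbf{w}_j)=\Lambda_\tau(e^{i\vartheta'_j},\mathbf{w}'_j)$ for large $j$ is correct, as is the uniqueness-of-lifts argument giving the group law. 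Your route yields a little more (it identifies the deck group of the covering explicitly) at the cost of extra bookkeeping; the paper's count is shorter.

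Two points you should tighten. First, freeness of the $\Lambda$-action does not follow from $\Lambda_\tau(1,\mathbf{0})=(\tau^{-1},\mathbf{0})$: that only shows the action is effective. Freeness does hold, because a fixed point of $\Lambda_\tau$ would force $c_\tau(\mathbf{w})\equiv\alpha \pmod{2\pi}$ for $\tau=e^{i\alpha}$, which is impossible since $c_\tau$ takes values in $(-\delta,\delta)$ while $\tau\notin T_\delta$; this one-line observation is needed for the quotient map $q$ to be an $l:1$ covering. Second, $\Lambda_\tau$ maps $\mathbf{T}^1\times B_{2d}(\mathbf{0},\epsilon)$ into the larger product on which $c_\tau,\mathbf{r}_\tau$ were constructed, not necessarily into itself, so to form the quotient you should pass to a $T_m$-invariant neighborhood of $\mathbf{T}^1\times\{\mathbf{0}\}$ (or state the covering property, as the paper implicitly does, only for points sufficiently close to the orbit); this boundary caveat is shared by the paper's own proof and does not affect the conclusion.
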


\begin{proof}
Suppose $x'=e^{i\vartheta_0}\bullet x\in \mathbf{T}^1\cdot x$.
Then for any $g\in T_m$ we have
$\mathfrak{y}\big((e^{i\vartheta_0}\,g,\mathbf{0})\big)=x'$. Therefore,
the inverse image ${\mathfrak{y}'}^{-1}(x')$ contains $l$ distinct elements
$(e^{i\vartheta_0}\,g,\mathbf{0})$ ($g\in T_m$), and at each of these
$\mathfrak{y}'$ is a local diffeomorphism. 
It follows that any $x''$ sufficiently close to the orbit
$\mathbf{T}^1\cdot x$ has at least $l$ inverse images under $\mathfrak{y}'$,
and that at each of these the latter is a local diffeomorphism.

I claim that in fact any $x''$ sufficiently close to the orbit
$\mathbf{T}^1\cdot x$ has exactly $l$ inverse images under $\mathfrak{y}'$.
If not, there exist $\epsilon_j\rightarrow 0^+$ and for every $j$
distinct pairs
$$\big(g_j^{(a)},\mathbf{v}_j^{(a)}\big)
\in \mathbf{T}^1\times B_{2d}(\mathbf{0},\epsilon),\,\,\,\,\,\,
1\le a\le l+1,$$
such that $g_j^{(a)}\bullet \mathbf{v}_j^{(a)}=g_j^{(b)}\bullet\mathbf{v}_j^{(b)}$,
for every $1\le a,b\le l+1$. 
Arguing as in the proof of Lemma \ref{lem:injective-adapted-HLC}, we conclude that
$g_j^{(a)}\,{g_j^{(b)}}^{-1}\not\in T_\delta$ for any $1\le b< a\le l+1$
and $j\gg 0$. In particular, 
perhaps after passing to a subsequence, for $a=2,\ldots,l+1$ we have
$g_j^{(a)}\,{g_j^{(1)}}^{-1}\rightarrow \lambda_\infty^{(a)}\in T_m\setminus T_\delta$.

Suppose $\lambda_\infty^{(a)}=\lambda_\infty^{(b)}$
for $2\le a< b\le l+1$. Then $g_j^{(a)}\,{g_j^{(b)}}^{-1}\rightarrow 1\in T_\delta$
as $j\rightarrow+\infty$, absurd. Therefore, $T_m$ contains the $l+1$
distinct elements
$\{1,\lambda_\infty^{(2)},\ldots,\lambda_\infty^{(l+1)}\}$, a contradiction.
\end{proof}

\begin{lem}
For any $\vartheta\in (-\pi,\pi)$, we have
$$\mathfrak{y}^*(dV_X)(\vartheta,\mathbf{0})=\dfrac{1}{2\pi}\,\Phi(m)\,|d\vartheta|\,
d\mathcal{L}(\mathbf{w}),$$
where $d\mathcal{L}(\mathbf{w})$ is the Lebesgue measure
on $\mathbf{R}^{2d}$. 
\end{lem}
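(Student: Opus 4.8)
The plan is to reduce the computation to the value of the pullback at the centre $(0,\mathbf{0})$ of the chart by an invariance argument, and then to read that value off from the Jacobian already recorded in (\ref{eqn:jacobian-HLC-change}).

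First I would note that $\mathfrak{y}$ intertwines translation of the angular variable with the action $\mu^X$. Since $\bullet$ is a (commutative) group action and, on $X$, $e^{i\vartheta_0}\bullet(\cdot)$ restricts to $\mu^X_{e^{-i\vartheta_0}}$, formula (\ref{eqn:adapted-HLC}) gives
$$
\mathfrak{y}(\vartheta_0+\vartheta,\mathbf{w})=e^{i\vartheta_0}\bullet\mathfrak{y}(\vartheta,\mathbf{w})=\mu^X_{e^{-i\vartheta_0}}\big(\mathfrak{y}(\vartheta,\mathbf{w})\big).
$$
Now $\mu^X$ preserves $dV_X$: it preserves $\alpha$ and covers $\mu^M$, which preserves $\omega$, hence $dV_M$; therefore it preserves $dV_X=(1/2\pi)\,\alpha\wedge\pi^*(dV_M)$. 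Writing $G_{\vartheta_0}(\vartheta,\mathbf{w})=(\vartheta_0+\vartheta,\mathbf{w})$, the displayed identity reads $\mathfrak{y}\circ G_{\vartheta_0}=\mu^X_{e^{-i\vartheta_0}}\circ\mathfrak{y}$, so
$$
G_{\vartheta_0}^*\big(\mathfrak{y}^*(dV_X)\big)=\mathfrak{y}^*\big((\mu^X_{e^{-i\vartheta_0}})^*(dV_X)\big)=\mathfrak{y}^*(dV_X).
$$
As $G_{\vartheta_0}$ is a translation (unit Jacobian) carrying $(0,\mathbf{0})$ to $(\vartheta_0,\mathbf{0})$, evaluating this identity at the origin yields $\mathfrak{y}^*(dV_X)(\vartheta_0,\mathbf{0})=\mathfrak{y}^*(dV_X)(0,\mathbf{0})$ for every $\vartheta_0\in(-\pi,\pi)$, so it suffices to compute the pullback at the centre.

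At the centre I would write $\mathfrak{y}=\mathfrak{x}\circ F$ with $F=\mathfrak{x}^{-1}\circ\mathfrak{y}$, a diffeomorphism of a neighbourhood of the origin fixing $(0,\mathbf{0})$ (indeed $\mathfrak{y}(0,\mathbf{w})=x+\mathbf{w}=\mathfrak{x}(0,\mathbf{w})$), and apply the change of variables formula for densities, $\mathfrak{y}^*(dV_X)=F^*\big(\mathfrak{x}^*(dV_X)\big)$, to get
$$
\mathfrak{y}^*(dV_X)(0,\mathbf{0})=\big|\det\mathrm{Jac}_{(0,\mathbf{0})}(F)\big|\cdot\mathfrak{x}^*(dV_X)(0,\mathbf{0}).
$$
By (\ref{eqn:jacobian-HLC-change}) this Jacobian is block upper triangular with diagonal blocks $I_{2d}$ and $-\Phi(m)$, hence has determinant $-\Phi(m)$ and absolute value $\Phi(m)$ (recall $\Phi>0$); and by the normalization of $dV_X$ in Heisenberg local coordinates recalled above, $\mathfrak{x}^*(dV_X)(0,\mathbf{0})=(2\pi)^{-1}\,|d\theta|\,d\mathcal{L}(\mathbf{v})$. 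Combining the two steps gives $\mathfrak{y}^*(dV_X)(\vartheta,\mathbf{0})=(2\pi)^{-1}\,\Phi(m)\,|d\vartheta|\,d\mathcal{L}(\mathbf{w})$ for every $\vartheta$, which is the claim.

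There is essentially no serious obstacle: the only substantive input — the Jacobian of the change between the two HLC systems — has already been computed in (\ref{eqn:jacobian-HLC-change}), and the rest is bookkeeping of how densities transform together with the elementary check that $\mu^X$ preserves $dV_X$. The only point requiring a little care is that $\mathfrak{y}$ is defined only for $\vartheta\in(-\pi,\pi)$, so $G_{\vartheta_0}$ should be used on a sufficiently small neighbourhood of the origin; this is harmless since $0$ and any prescribed $\vartheta_0\in(-\pi,\pi)$ are interior points.
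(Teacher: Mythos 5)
Your proof is correct and follows essentially the same route as the paper: the substantive step is exactly the paper's change-of-variables computation at the origin using the Jacobian (\ref{eqn:jacobian-HLC-change}) together with the HLC normalization $\mathfrak{x}^*(dV_X)(0,\mathbf{0})=(2\pi)^{-1}|d\theta|\,d\mathcal{L}(\mathbf{v})$. The only (harmless) difference is how the case $\vartheta_0\neq 0$ is handled: you reduce to the origin via the equivariance $\mathfrak{y}(\vartheta_0+\vartheta,\mathbf{w})=\mu^X_{e^{-i\vartheta_0}}\big(\mathfrak{y}(\vartheta,\mathbf{w})\big)$ and the $\mu^X$-invariance of $dV_X$, whereas the paper re-centers the Heisenberg coordinates at $e^{i\vartheta_0}\bullet x$ and repeats the same Jacobian argument; both are valid and of comparable length.
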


\begin{proof}
Let us write 
$\mathfrak{x}^*(dV_X)=\mathcal{V}(\theta,\mathbf{v})\,|d\theta|\,d\mathcal{L}(\mathbf{v})$,
so that $\mathcal{V}(\theta,\mathbf{0})=(2\pi)^{-1}$. Then
\begin{eqnarray*}
 \mathfrak{y}^*(dV_X)&=&\left(\mathfrak{x}\circ \mathfrak{x}^{-1}\circ
\mathfrak{y}\right)^*\big(dV_X\big)=\left(\mathfrak{x}^{-1}\circ
\mathfrak{y}\right)^*\left(\mathfrak{x}^*\big(dV_X\big)\right)\\
&=&\left(\mathfrak{x}^{-1}\circ
\mathfrak{y}\right)^*\Big(
\mathcal{V}(\theta,\mathbf{v})\,d\theta\,d\mathcal{L}(\mathbf{v})\\
&=&
\Big(\mathcal{V}\circ \left(\mathfrak{x}^{-1}\circ
\mathfrak{y}\right)\Big)
\cdot 
\left|
\det 
\Big(
\mathrm{Jac}
\left(
\mathfrak{x}^{-1}\circ
\mathfrak{y}
\right)
\Big)
\right|\,|d\vartheta|\,
d\mathcal{L}(\mathbf{w}).
\end{eqnarray*}

At $(0,\mathbf{0})$, in view of (\ref{eqn:jacobian-HLC-change}) and since
$\Phi>0$, we get
\begin{equation}
 \label{eqn:jacobian-HLC-change-origin}
\mathfrak{y}^*(dV_X)(0,\mathbf{0})=
\dfrac{1}{2\pi}\,\Phi(m)\,|d\vartheta|\,
d\mathcal{L}(\mathbf{w}).
\end{equation}
This proves the claim at $(0,\mathbf{0})$. To prove it at $(\vartheta_0,\mathbf{0})$,
we replace $\vartheta\sim \vartheta_0$ by $\vartheta+\vartheta_0$ with $\vartheta\sim 0$
and note that $e^{i(\vartheta+\vartheta_0)}\bullet (x+\mathbf{v})=
e^{i\vartheta}\bullet \left(e^{i\vartheta_0}\bullet (x+\mathbf{v})\right)$.
Since $\mathfrak{x}_{\vartheta_0}(\theta,\mathbf{v})=
e^{i\theta}\cdot \left(e^{i\vartheta_0}\bullet (x+\mathbf{v})\right)$
is a system of HLC centered at $e^{i\vartheta_0}\bullet x$, one can argue as in the previous
case.
\end{proof}

\begin{cor}
 \label{cor:computation-integral}
Under the assumptions of Lemma \ref{lem:cover-adapted-HLC},
if $\epsilon>0$ is sufficiently small 
let $V_\epsilon=\mathfrak{y}'\big(\mathbf{T}^1\times B_{2d}(\mathbf{0},\epsilon)\big)$.
Then for any continuous function on $X$, we have
$$
\int_{V_\epsilon}\,f\,dV_X=\dfrac{1}{2\pi\,|T_m|}\,
\int_{-\pi}^\pi\,\int_{B_{2d}(\mathbf{0},\epsilon)}
\,f\circ \mathfrak{y}\cdot\big(\Phi(m)+A(\mathbf{w})\big)\,|d\vartheta|\,
d\mathcal{L}(\mathbf{w}),
$$
where $A(\mathbf{w})=O(\|\mathbf{w}\|)$.
\end{cor}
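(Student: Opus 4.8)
The plan is to combine Lemma~\ref{lem:cover-adapted-HLC} with the density computation of the preceding Lemma and the elementary change-of-variables formula for a finite smooth covering.

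First, by Lemma~\ref{lem:cover-adapted-HLC}, for $\epsilon>0$ sufficiently small the map $\mathfrak{y}':\mathbf{T}^1\times B_{2d}(\mathbf{0},\epsilon)\to X$ is an $l:1$ covering onto the $\mu^X$-invariant tubular neighborhood $V_\epsilon$ of $\mathbf{T}^1\cdot x$, where $l=|T_m|$. Since $X$ is compact, any $f\in\mathcal{C}^0(X)$ is bounded and there is no integrability issue. I would then invoke the standard change-of-variables formula for a finite covering: covering $V_\epsilon$ by open sets over each of which $\mathfrak{y}'$ restricts to $l$ disjoint diffeomorphisms, and using a subordinate partition of unity, one obtains
\[
\int_{V_\epsilon} f\,dV_X \;=\; \frac{1}{l}\int_{\mathbf{T}^1\times B_{2d}(\mathbf{0},\epsilon)}\big(f\circ\mathfrak{y}'\big)\,{\mathfrak{y}'}^*(dV_X).
\]

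Next I would rewrite the right-hand side in the angular coordinate. The map $\mathfrak{y}$ of (\ref{eqn:adapted-HLC}) is precisely $\mathfrak{y}'$ read through the chart $\vartheta\mapsto e^{i\vartheta}$, $\vartheta\in(-\pi,\pi)$, whose complement in $\mathbf{T}^1$ has measure zero; hence the integral above equals $l^{-1}\int_{-\pi}^{\pi}\int_{B_{2d}(\mathbf{0},\epsilon)}(f\circ\mathfrak{y})\,{\mathfrak{y}}^*(dV_X)$. By the preceding Lemma, ${\mathfrak{y}}^*(dV_X)(\vartheta,\mathbf{0})=(2\pi)^{-1}\,\Phi(m)\,|d\vartheta|\,d\mathcal{L}(\mathbf{w})$ for every $\vartheta\in(-\pi,\pi)$. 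Writing the smooth density ${\mathfrak{y}}^*(dV_X)$ on $\mathbf{T}^1\times B_{2d}(\mathbf{0},\epsilon)$ as $(2\pi)^{-1}\big(\Phi(m)+A(\vartheta,\mathbf{w})\big)\,|d\vartheta|\,d\mathcal{L}(\mathbf{w})$, the function $A$ vanishes identically on $\mathbf{T}^1\times\{\mathbf{0}\}$, so by Taylor's theorem together with the compactness of $\mathbf{T}^1$ we get $A(\vartheta,\mathbf{w})=O(\|\mathbf{w}\|)$ uniformly in $\vartheta$; abusing notation we denote it by $A(\mathbf{w})$. Substituting and recalling $l=|T_m|$ yields the asserted identity.

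The only genuinely delicate points are the bookkeeping of the $l$-fold multiplicity in the covering formula, for which Lemma~\ref{lem:cover-adapted-HLC} provides exactly the needed fiber count, and the uniformity of the remainder $A(\vartheta,\mathbf{w})=O(\|\mathbf{w}\|)$ over the compact variable $\vartheta$; neither requires any input beyond the two preceding Lemmas, so I expect the argument to be quite short.
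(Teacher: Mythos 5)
Your argument is correct and is essentially the one the paper leaves implicit: the $l:1$ covering formula from Lemma \ref{lem:cover-adapted-HLC} combined with the density $\mathfrak{y}^*(dV_X)(\vartheta,\mathbf{0})=(2\pi)^{-1}\Phi(m)\,|d\vartheta|\,d\mathcal{L}(\mathbf{w})$ of the preceding Lemma and a Taylor remainder in $\mathbf{w}$. As a minor remark, your "abuse of notation" $A(\vartheta,\mathbf{w})=A(\mathbf{w})$ is in fact exact, since $\mathfrak{y}\circ T_\sigma=\mu^X_{e^{-i\sigma}}\circ\mathfrak{y}$ (translation in $\vartheta$ versus the $\bullet$-action) and the $\bullet$-action preserves $dV_X$, so the pulled-back density is $\vartheta$-independent.
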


\section{Proof of Theorem \ref{thm:lower-order-terms}}

\begin{proof}
 Being the orthogonal projector 
$\Pi^\mu_k:L^2(X,dV_X)\rightarrow H^\mu_k(X)$,
$\Pi^\mu_k$ is idempotent; therefore for every $x\in X$ the 
Schwartz kernel 
$\Pi^\mu_k\in \mathcal{C}^\infty(X\times X)$
satisfies 
\begin{equation}
\label{eqn:composition-idempotent}
 \Pi_k^\mu(x,x)=\int_X\,\Pi^\mu_k(x,y)\,\Pi^\mu_k(y,x)\,dV_X(y).
\end{equation}

Let us fix $x_0\in X'$ and set $m_0=:\pi(x_0)\in M'$, $n_0=:\kappa(x_0)\in N'$,
and apply (\ref{eqn:composition-idempotent}) with $x=x_0$.
Let $\sigma$ be a local holomorphic frame of $B^\vee$ on an open neighborhood $V\subseteq N'$
of $n_0$; as usual we implicitly identify $B^\vee_0$ with $A^\vee_0$ by means of $\overline{\varGamma}$  
(Lemma \ref{lem:natural-isomorphism_AB}). We may assume without loss that $\sigma(n_0)=x_0$.
Let $\|\sigma\|_B=:(\mathcal{N}_B\circ \sigma)^{1/2}$. Then $\|\sigma\|_B$ is a positive real-analytic
function on $V$ by Lemma \ref{lem:real-analytic-herm-metric-B}. Therefore, the unitarization
$\sigma_u=:(1/\|\sigma\|_B)\bullet\sigma:V\rightarrow X'$ (see Remark \ref{rem:natural-isomorphism_AB}) 
is a real-analytic section and 
$\sigma_u(n_0)=x_0$.

There exists $\epsilon>0$ such that 
$\mathrm{dist}_X\left(x_0,\mathbf{T}^1\cdot y\right)\ge
\delta$ for every $y\in X\setminus \kappa^{-1}(V)$. 
Therefore, by Theorem 1 of \cite{pao-IJM}
we have $\Pi^\mu_k(x_0,\cdot)=O\left(k^{-\infty}\right)$ uniformly on $X\setminus \kappa^{-1}(V)$.
If $\sim$ stands for \lq has the same asymptotics as\rq, we see from this and
(\ref{eqn:composition-idempotent}) for $x=x_0$ that
\begin{equation}
\label{eqn:composition-idempotent-neighborhood}
 \Pi_k^\mu(x_0,x_0)\sim\int_{\kappa^{-1}(V)}\,\Pi^\mu_k(x_0,y)\,\Pi^\mu_k(y,x_0)\,dV_X(y).
\end{equation}

We can parametrize the invariant open neighborhood $\kappa^{-1}(V)\subseteq X'$ by setting
\begin{equation}
 \label{eqn:parametrization}
\varrho:\mathbf{T}^1\times V\rightarrow \kappa^{-1}(V),\,\,\,\,\,
\left(e^{i\vartheta},n\right)\mapsto e^{i\vartheta}\bullet \sigma_u(n).
\end{equation}
Then
\begin{equation}
 \label{eqn:volume-form-parametrization}
\varrho^*(dW_X)=\dfrac{1}{2\pi}\,d\vartheta\wedge dV_N
\end{equation}
where $dV_N=(1/d!)\,\eta^{\wedge d}$ (see \S \ref{sctn:circle-bundle-structure}).
Now (\ref{eqn:isotypes}) means that 
$s\big(e^{i\vartheta}\bullet x\big)=e^{ik\vartheta}\,s(x)$, for every 
$e^{i\vartheta}\in \mathbf{T}^1$ and $x\in X$. Therefore, given 
(\ref{eqn:kernel-szego-equiv}), we have
\begin{eqnarray*}
 \lefteqn{\Pi^\mu_k\left( x_0,e^{i\vartheta}\bullet \sigma_u(n)\right)\,
\Pi^\mu_k\left( e^{i\vartheta}\bullet \sigma_u(n),x\right)}\\
&=&
\left[e^{-ik\vartheta}\,\Pi^\mu_k\left( x_0,\sigma_u(n)\right)\right]\,
\left[e^{ik\vartheta}\,\Pi^\mu_k\left( \sigma_u(n),x_0\right)\right]\\
&=&
\Pi^\mu_k\left( x_0,\sigma_u(n)\right)\,
\Pi^\mu_k\left( \sigma_u(n),x_0\right).
\end{eqnarray*}

Inserting this and (\ref{eqn:comparison-volume-forms}) 
in (\ref{eqn:composition-idempotent-neighborhood}) we obtain
\begin{eqnarray}
 \label{eqn:composition-idempotent-parametrized}
\lefteqn{\Pi^\mu_k(x_0,x_0)}\nonumber\\
&\sim&\dfrac{1}{2\pi}\,\int_{-\pi}^\pi\,\int_V
\Pi^\mu_k\left( x_0,\sigma_u(n)\right)\,
\Pi^\mu_k\left( \sigma_u(n),x_0\right)\,\Phi(n)^{d+1}\,d\vartheta\, dV_N(n)\nonumber\\
&=&\int_V
\Pi^\mu_k\left(\sigma_u(n_0),\sigma_u(n)\right)\,
\Pi^\mu_k\left( \sigma_u(n),\sigma_u(n_0)\right)\,\Phi(n)^{d+1}\,dV_N(n).
\end{eqnarray}

If we use (\ref{eqn:key-relation-Pi-P-1}) in (\ref{eqn:composition-idempotent-parametrized})
we get
\begin{eqnarray}
 \label{eqn:composition-idempotent-parametrized-1}
\lefteqn{\Pi^\mu_k(x_0,x_0)}\nonumber\\
&\sim&\int_Ve^{-k\,\mathcal{D}_N(n_0,n)}\,\widetilde{K^\mu_k}(n_0,n)\,\widetilde{K^\mu_k}(n,n_0)
\,\Phi(n)^{d+1}\,dV_N(n),
\end{eqnarray}
where $\mathcal{D}$ is Calabi's diastasis function of $(N',I,2\eta)$, defined in 
(\ref{eqn:calabi-diastasis}).

Let us set, for simplicity, $\eta'=2\,\eta$. Also, suppose without loss
that $V$ is the domain of a holomorphic local coordinate chart
$(z_a)$ for $N'$. If $z_a+i\,y_a$, with $x_a,\,y_a$ real-valued, 
then by (\ref{eqn:riem-vol-element})
we have
\begin{eqnarray}
 \label{eqn:volume-element-N-2eta}
dV_N&=&\det \big([2\,\eta_{k\overline{l}}]\big)\cdot dx_1\wedge \cdots dx_d\wedge dy_1\cdots \wedge
dy_d\nonumber\\
&=&\det \big([\eta'_{k\overline{l}}]\big)\cdot dx_1\wedge \cdots dx_d\wedge dy_1\cdots \wedge
dy_d.
\end{eqnarray}
In view of (\ref{eqn:asymptotic-expansion-K-k-mu}),
we can thus rewrite (\ref{eqn:composition-idempotent-parametrized-1}) as follows:
\begin{eqnarray}
 \label{eqn:composition-idempotent-parametrized-2}
\lefteqn{\Pi^\mu_k(x_0,x_0)}\\
&\sim&\left(\frac k\pi\right)^{2d}\,\sum_{j\ge 0}k^{-j}\int_B
e^{-k\,\mathcal{D}_N(n_0,n)}\,Z_j(n_0,n)
\,\det \big([\eta'_{k\overline{l}}]\big)\,dx\,dy,\nonumber
\end{eqnarray}
where now $B\subseteq \mathbb{C}^d$ is some open ball centered at the origin, and for every
$j\ge 0$ we have
\begin{equation}
 \label{eqn:definition-Zj}
Z_j(n,n')=:\Phi(n')^{d+1}\,\sum_{a+b=j}\widetilde{S_a^\mu}(n,n')\,\widetilde{S_b^\mu}(n',n)\,\,\,\,\,\,\,
\big((n,n')\in V\times V\big).
\end{equation}
In particular, since $S_0^\mu=\Phi^{-(d+1)}$,
for $j=0$ we get from(\ref{eqn:definition-Zj}):
\begin{eqnarray}
 \label{eqn:definition-Z0}
Z_0(n,n')&=&\Phi(n')^{d+1}\,\widetilde{\Phi}(n,n')^{-(d+1)}\,\widetilde{\Phi}(n',n)^{-(d+1)}\\
&=&\left(\dfrac{\Phi(n')}{\widetilde{\Phi}(n,n')\,\widetilde{\Phi}(n',n)}\right)^{d+1}
=F_\Phi(n')^{d+1},
\nonumber
\end{eqnarray}
with the notation of Lemma \ref{lem:technical}, taking $p_0=n$, and
where $\widetilde{\Phi}$ is the sesquiholomorphic extension of $\Phi$ (as a function on
$N'$) to some open neighborhood $\widetilde{N}$
of the diagonal (and we assume $V\times V\subseteq \widetilde{N}$). On the diagonal,
$Z_0(n,n)=\Phi(n)^{-(d+1)}$.

On the other hand, for $j\ge 1$ we get
\begin{eqnarray}
 \label{eqn:definition-Zjge1}
Z_j(n,n')&=&\Phi(n')^{d+1}\,\left[\widetilde{\Phi}(n,n')^{-(d+1)}\,\widetilde{S_j^\mu}(n',n)+
\widetilde{S_j^\mu}(n,n')\,\widetilde{\Phi}(n',n)^{-(d+1)}\right]\nonumber\\
&&+\Phi(n')^{d+1}\,\sum_{0<a<j}\widetilde{S_a^\mu}(n,n')\,\widetilde{S_{j-a}^\mu}(n',n).
\end{eqnarray}
On the diagonal,
\begin{eqnarray}
 \label{eqn:diagonal-Zjge1}
Z_j(n,n)=2\,S_j^\mu(n)
+\Phi(n)^{d+1}\,\sum_{0<a<j}S_a^\mu(n)\,S_{j-a}^\mu(n).
\end{eqnarray}

Let us now consider the asymptotics of the $j$-th summand in 
(\ref{eqn:composition-idempotent-parametrized-2}).
Because $\mathcal{D}_N$ is the diastasis function of $\eta'$, we can apply
Theorem 3 of \cite{englis}, and obtain an asymptotic expansion
of the form
\begin{eqnarray}
 \label{eqn:asymptotic-expansion-Zj}
\lefteqn{\int_B
e^{-k\,\mathcal{D}_N(n_0,n)}\,Z_j(n_0,n)
\,\det \big([\eta'_{k\overline{l}}]\big)\,dx\,dy}\nonumber\\
&\sim&
\left(\frac\pi k\right)^d\,\sum_{l\ge 0}k^{-l}\,\left.R_l^N\big(Z_j(n_0,\cdot)\big)\right|_{n=n_0},
\end{eqnarray}
where the $R_j^N$'s are Englis' operators for the K\"{a}hler manifold $(N,I,\eta')$.

Using (\ref{eqn:asymptotic-expansion-Zj}) within (\ref{eqn:composition-idempotent-parametrized-2}),
we get 
\begin{eqnarray}
 \label{eqn:composition-idempotent-parametrized-3}
\Pi^\mu_k(x_0,x_0)
&\sim&\left(\frac k\pi\right)^{d}\,\sum_{j,l\ge 0}k^{-j-l}\,
\left.R_l^N\big(Z_j(n_0,\cdot)\big)\right|_{n=n_0}\nonumber\\
&=&\left(\frac k\pi\right)^{d}\,\sum_{j\ge 0}k^{-j}\,
\sum_{a+b=j}\left.R_a^N\big(Z_b(n_0,\cdot)\big)\right|_{n=n_0}.
\end{eqnarray}

It follows from (\ref{eqn:asymptotic-expansion-szego-special-case})
and (\ref{eqn:composition-idempotent-parametrized-3}) that 
\begin{eqnarray}
 \label{eqn:relation-for-S-j}
S^\mu_j(n_0)&=&\sum_{a+b=j}\left.R_a^N\big(Z_b(n_0,\cdot)\big)\right|_{n=n_0}\nonumber\\
&=&Z_j(n_0,n_0)+\sum_{a=1}^j\left.R_a^N\big(Z_{j-a}(n_0,\cdot)\big)\right|_{n=n_0}.
\end{eqnarray}
Given (\ref{eqn:diagonal-Zjge1}), the latter relation may be rewritten
\begin{eqnarray}
 \label{eqn:relation-for-S-j-1}
S^\mu_j(n_0)&=&2\,S_j^\mu(n_0)
+\Phi(n_0)^{d+1}\,\sum_{0<a<j}S_a^\mu(n_0)\,S_{j-a}^\mu(n_0)\\
&&
+\sum_{a=1}^j\left.R_a^N\big(Z_{j-a}(n_0,\cdot)\big)\right|_{n=n_0}.\nonumber
\end{eqnarray}
It follows that 
\begin{eqnarray}
 \label{eqn:relation-for-S-j-2}
S^\mu_j(n_0)=-\Phi(n_0)^{d+1}\,\sum_{0<a<j}S_a^\mu(n_0)\,S_{j-a}^\mu(n_0)
-\sum_{a=1}^j\left.R_a^N\big(Z_{j-a}(n_0,\cdot)\big)\right|_{n=n_0},
\end{eqnarray}
which determines $S^\mu_j$ for any $j\ge 1$ in terms of the $S^\mu_k$'s with
$0\le k<j$ and their sesquiholomorphic extensions. The proof is complete, 
for (\ref{eqn:relation-for-S-j-2})
is (\ref{eqn:S-j-general}), with $j$ in place of $j+1$.
\end{proof}

\section{Proof of Corollary \ref{cor:lower-order-terms}}

\begin{proof}
 Let us apply (\ref{eqn:relation-for-S-j-2}) with $j=1$. We get
\begin{eqnarray}
 \label{eqn:relation-for-S-j=0}
S^\mu_1(n_0)&=&-\left.R_1^N\big(Z_0(n_0,\cdot)\big)\right|_{n=n_0}\nonumber\\
&=&-\left.\left(\Delta_N-\frac 12\,\varrho_N\right)\big(Z_0(n_0,\cdot)\big)\right|_{n=n_0}.
\end{eqnarray}
where $Z_0$ is defined by (\ref{eqn:definition-Z0}), and $\Delta_N$ and $\varrho_N$
are defined by (\ref{eqn:laplace-beltrami}) and (\ref{eqn:scalar-curvature}),
respectively, with reference to the K\"{a}hler manifold $(P,K,\gamma)=(N,I,\eta')$, where
$\eta'=2\,\eta$. 

We have, by (\ref{eqn:definition-Z0}),
\begin{equation}
 \label{eqn:relation-z0-power}
Z_0(n_0,n)=\left(\dfrac{\Phi(n)}{\widetilde{\Phi}(n_0,n)\,\widetilde{\Phi}(n,n_0)}\right)^{d+1}=
F_\Phi(n)^{d+1},
\end{equation}
where $F_\Phi$ is defined as in  Lemma \ref{lem:technical}, with $f=\Phi$ and 
$n_0=p_0$.
Applying (\ref{eqn:laplacian-powers-inductive}) with $l=d+1$ and $f=F_\Phi$, we get
\begin{eqnarray}
 \label{eqn:laplacian-powers-Phi}
\lefteqn{\left.\Delta_N\big(Z_0(n_0,\cdot)\big)\right|_{n=n_0}=\Delta_N\left(F_\Phi^{d+1}\right)(n_0)}
\nonumber\\
&=&(d+1)\,F_\Phi(n_0)^d\cdot \Delta_N(F_\Phi)(n_0)
+\dfrac{d(d+1)}{2}\,F_\Phi(n_0)^{d-1}\,
\big\|\mathrm{grad}_N(F_\Phi)(n_0)\big\|^2\nonumber\\
&=&(d+1)\,\Phi(n_0)^{-d}\cdot \Delta_N(F_\Phi)(n_0)
\end{eqnarray}
where the gradient and the norm are taken 
with respect to the Riemannian metric $h'=2\,h$, and in the last equation
we have made use of Lemma \ref{lem:technical-1}.

Let us apply Lemma \ref{lem:technical} with $(P,K,\gamma)=(N',I,\eta')$,
$f=\Phi^{d+1}\in \mathcal{C}^\varpi(N')$, and $p_0=n_0$, so that in the statement
we have $F=Z_0(n_0,\cdot)$.
We obtain
\begin{eqnarray}
 \label{eqn:laplacian-sesquiholomorphic-Phi}
\lefteqn{\left.\Delta_N\big(Z_0(n_0,\cdot)\big)\right|_{n=n_0}}
\\
&=&(d+1)\,\Phi(n_0)^{-(d+2)}\,\left[\Delta_P(\Phi)(n_0)
-\dfrac{1}{2\,\Phi(n_0)}\,\big\|\mathrm{grad}_N(\Phi)(n_0)\big\|^2\right].\nonumber
\end{eqnarray}
Inserting (\ref{eqn:laplacian-sesquiholomorphic-Phi}) 
in (\ref{eqn:relation-for-S-j=0})
\begin{eqnarray*}
S_1^\mu(n_0)&=&\frac 12\,\varrho_N(n_0)\,\Phi(n_0)^{-(d+1)}\\
&&+(d+1)\,\Phi(n_0)^{-(d+2)}\,\left[
\dfrac{1}{2\,\Phi(n_0)}\,\big\|\mathrm{grad}_N(\Phi)(n_0)\big\|^2
-\Delta_P(\Phi)(n_0)
\right].\nonumber
\end{eqnarray*}

\end{proof}

\section{Proof of Theorem \ref{thm:toeplitz-scaling}}

\begin{proof}
Statements 1. and 2. follow quite straightforwardly by using the corresponding
properties of $\Pi^\mu_k$ in Theorem 1 of \cite{pao-IJM}
in the first line of (\ref{eqn:kernel-topelitz-equiv}).

To prove 3., we start from the relation
\begin{eqnarray}
 \label{eqn:composition-kernel-scaling}
\lefteqn{
T^\mu_k(f)\left(x+\dfrac{\mathbf{v}}{\sqrt{k}},x+\dfrac{\mathbf{w}}{\sqrt{k}}\right)
}\\
&=&\int_X\Pi^\mu_k\left(x+\dfrac{\mathbf{v}}{\sqrt{k}},y\right)\,f(y)\,
\Pi^\mu_k\left(y,x+\dfrac{\mathbf{w}}{\sqrt{k}}\right)\,dV_X(y).\nonumber
\end{eqnarray}

If integration in $dV_X(y)$ in (\ref{eqn:composition-kernel-scaling}) is restricted to
a given invariant tubular neighborhood $V$ of the orbit $\mathbf{T}^1\cdot x$, only
a negligible contribution to the asymptotics is lost. On the other hand, on $V$ we can 
introduce $\mu$-adapted HLC as in \S \ref{sctn:adapted-HLC}, so as to write
$y=e^{i\theta}\bullet (x+\mathbf{u})$. Applying Corollary \ref{cor:computation-integral}
(with $V=V_\epsilon$), we get
\begin{eqnarray}
 \label{eqn:composition-kernel-scaling-0}
\lefteqn{
T^\mu_k(f)\left(x+\dfrac{\mathbf{v}}{\sqrt{k}},x+\dfrac{\mathbf{w}}{\sqrt{k}}\right)
}\\
&\sim&\dfrac{1}{2\pi\,|T_m|}\,
\int_{-\pi}^\pi\,\int_{B_{2d}(\mathbf{0},\epsilon)}
\,\big(\Phi(m)+A(\mathbf{u})\big)\nonumber\\
&&\cdot \Pi^\mu_k\left(x+\dfrac{\mathbf{v}}{\sqrt{k}},e^{i\theta}\bullet (x+\mathbf{u})\right)\,
f(m+\mathbf{u})\,
\Pi^\mu_k\left(e^{i\theta}\bullet (x+\mathbf{u}),x+\dfrac{\mathbf{w}}{\sqrt{k}}\right)\nonumber\\
&&\cdot|d\vartheta|\,
d\mathcal{L}(\mathbf{u}),\nonumber
\end{eqnarray}
where we used that $f\in \mathcal{C}^\infty(M)^\mu$.

Let $D_1,\,D_2>0$ be as in (\ref{eqn:bound-orbit-distance-HLC-first}).
Since $\|\mathbf{v}\|,\,\|\mathbf{w}\|\le C\,k^{1/9}$, we have
\begin{equation}
 \label{eqn:bound-on-orbit-distance}
\mathrm{dist}_X\left(\mathbf{T}^1\cdot x,x+\dfrac{\mathbf{v}}{\sqrt{k}}\right)
\le D_2\,C\,k^{-7/18}.
\end{equation}
If $\mathrm{dist}_X\left(\mathbf{T}^1\cdot x,y\right)\ge 2\,D_2\,C\,k^{-7/18}$, then by
(\ref{eqn:bound-on-orbit-distance}) we have
\begin{equation*}
\mathrm{dist}_X\left(\mathbf{T}^1\cdot y,x+\dfrac{\mathbf{v}}{\sqrt{k}}\right)\ge
D_2\,C\,k^{-7/18},
\end{equation*}
and similarly for $\mathbf{w}$. It follows from this and statement 2.
(with $\epsilon=1/9$) that the contribution to
(\ref{eqn:composition-kernel-scaling}) and (\ref{eqn:composition-kernel-scaling-0})
coming from the locus where 
$\mathrm{dist}_X\left(\mathbf{T}^1\cdot x,y\right)\ge 2\,D_2\,C\,k^{-7/18}$ is rapidly decreasing.
By (\ref{eqn:bound-orbit-distance-HLC-first}), this means that in (\ref{eqn:composition-kernel-scaling-0})
the contribution of 
the locus where $\|\mathbf{u}\|\ge (2D_2/D_1)\,C\,k^{-7/18}$ is rapidly decreasing.
Therefore, only a negligible contribution is lost in (\ref{eqn:composition-kernel-scaling-0})
if the integrand is multiplied by $\varrho\left(k^{7/18}\,\mathbf{w}\right)$, where $\varrho$
is an appropriate radial bump function, identically equal to $1$ near the origin.

Furthermore, using (\ref{eqn:isotypes}) and (\ref{eqn:bullet-action}), for 
any $x,x',x''\in X$ and $e^{i\theta}\in \mathbf{T}^1$ we have
$$
\Pi^\mu_k\left(x',e^{i\theta}\bullet x''\right)
=e^{-ik\theta}\,\Pi^\mu_k\left(x',x''\right)=\overline{\Pi^\mu_k\left(e^{i\theta}\bullet x'',x'\right)}.
$$

Inserting this in (\ref{eqn:composition-kernel-scaling-0}), and applying the rescaling
$\mathbf{u}\mapsto \mathbf{u}/\sqrt{k}$, we obtain
\begin{eqnarray}
 \label{eqn:composition-kernel-scaling-1}
\lefteqn{
T^\mu_k(f)\left(x+\dfrac{\mathbf{v}}{\sqrt{k}},x+\dfrac{\mathbf{w}}{\sqrt{k}}\right)
}\\
&\sim&\dfrac{k^{-d}}{|T_m|}\,
\int_{\mathbb{C}^d}
\,\left(\Phi(m)+A\left(\dfrac{\mathbf{u}}{\sqrt{k}}\right)\right)\nonumber\\
&&\cdot \Pi^\mu_k\left(x+\dfrac{\mathbf{v}}{\sqrt{k}},x+\dfrac{\mathbf{u}}{\sqrt{k}}\right)\,
f\left(m+\dfrac{\mathbf{u}}{\sqrt{k}}\right)\,
\Pi^\mu_k\left(x+\dfrac{\mathbf{u}}{\sqrt{k}},x+\dfrac{\mathbf{w}}{\sqrt{k}}\right)\nonumber\\
&&\cdot\varrho\left(k^{-1/9}\,\mathbf{u}\right)\,
d\mathcal{L}(\mathbf{u});\nonumber
\end{eqnarray}
integration in $d\mathbf{u}$ is really over an expanding 
ball of radius $O\big(k^{1/9}\big)$ in $\mathbb{C}^d$.

Now by (3) of
Theorem 1 of \cite{pao-IJM} (and the remark immediately following
the statement of that Theorem) with $\upsilon_1=(0,\mathbf{v})$
and $\upsilon_2=(0,\mathbf{w})$, the sought expansion
holds for $\Pi_k^\mu$ (that is, for $f=1$). Thus
\begin{eqnarray}
 \label{eqn:asymptotic-expansion-szego-equivariant}
\Pi^\mu_k\left(x+\dfrac{\mathbf{v}}{\sqrt{k}},x+\dfrac{\mathbf{u}}{\sqrt{k}}\right)
&\sim& \left(\frac k\pi\right)^d\cdot\sum_{t\in T_m}t^k\,
e^{\psi_2\left(d_m\mu^M_{t^{-1}}(\mathbf{v}),\mathbf{u}\right)/\Phi(m)} \\
&&\cdot
  \left(\Phi(m)^{-(d+1)}+
\sum _{j\ge 1}k^{-j/2}\,R_j\left(m,d_m\mu^M_{t^{-1}}(\mathbf{v}),\mathbf{u}\right) \right),  \nonumber
\end{eqnarray}
where $\psi_2$ is as in (\ref{eqn:definition-psi-2}), and 
$R_j(m,\mathbf{v},\mathbf{u})$ is a polynomial function of $\mathbf{v}$ and $\mathbf{u}$.
Clearly, 
$$
\dfrac{1}{\Phi(m)}\,\psi_2(\mathbf{v},\mathbf{u})=\psi_2\left(\dfrac{1}{\sqrt{\Phi(m)}}\,\mathbf{v},
\dfrac{1}{\sqrt{\Phi(m)}}\,\mathbf{u}\right)=\psi_2\left(\mathbf{v}',
\mathbf{u}'\right),
$$
where for any $\mathbf{p}\in \mathbb{C}^d$ we set $\mathbf{p}'=\mathbf{p}/\sqrt{\Phi(m)}$.

Using this and the Taylor expansion for $f(m+\mathbf{u}/\sqrt{k})$ at $m$, we get
for (\ref{eqn:composition-kernel-scaling-1}) an asymptotic expansion in descending powers of 
$k^{1/2}$, whose leading term is given by
\begin{eqnarray}
 \label{eqn:leading-term-toeplitz-scaling}
\lefteqn{\dfrac{k^{-d}}{|T_m|}\,\Phi(m)^{-2d-1}	,f(m)\,\left(\frac k\pi\right)^{2d}}\\
&&\cdot\sum_{t,s\in T_m}(s\,t)^k\,
\int_{\mathbb{C}^d}e^{\psi_2\left(d_m\mu^M_{t^{-1}}(\mathbf{v}'),\mathbf{u}'\right)+
\psi_2\left(\mathbf{u}',d_m\mu^M_{s}(\mathbf{w}')\right)}\,d\mathcal{L}(\mathbf{u}).
\nonumber
\end{eqnarray}
Applying the change of variable $\mathbf{u}=\sqrt{\Phi(m)}\,\mathbf{s}$, 
(\ref{eqn:leading-term-toeplitz-scaling}) becomes
\begin{eqnarray}
 \label{eqn:leading-term-toeplitz-scaling-0}
\lefteqn{\dfrac{1}{|T_m|}\,\Phi(m)^{-(d+1)}\,f(m)\,\frac{k^d}{\pi^{2d}}}\\
&&\cdot\sum_{t,s\in T_m}(s\,t)^k\,
\int_{\mathbb{C}^d}e^{\psi_2\left(d_m\mu^M_{t^{-1}}(\mathbf{v}'),\mathbf{s}\right)+
\psi_2\left(\mathbf{s},d_m\mu^M_{s}(\mathbf{w}')\right)}\,d\mathcal{L}(\mathbf{s}).
\nonumber\\
&=&\dfrac{1}{|T_m|}\,\Phi(m)^{-(d+1)}\,f(m)\,\frac{k^d}{\pi^{2d}}\,\pi^d\,
\sum_{t,s\in T_m}(s\,t)^k\,e^{\psi_2\left(d_m\mu^M_{t^{-1}}(\mathbf{v}'),d_m\mu^M_{s}(\mathbf{w}')\right)}
\nonumber\\
&=&\dfrac{1}{|T_m|}\,\Phi(m)^{-(d+1)}\,f(m)\,\left(\frac{k}{\pi}\right)^d\,
\sum_{t,s\in T_m}(s\,t)^k\,e^{\psi_2\left(d_m\mu^M_{(st)^{-1}}(\mathbf{v}'),\mathbf{w}'\right)}\nonumber\\
&=&\Phi(m)^{-(d+1)}\,f(m)\,\left(\frac{k}{\pi}\right)^d\,
\sum_{t\in T_m}t^k\,e^{\psi_2\left(d_m\mu^M_{t^{-1}}(\mathbf{v}'),\mathbf{w}'\right)}\nonumber\\
&=&\Phi(m)^{-(d+1)}\,f(m)\,\left(\frac{k}{\pi}\right)^d\,
\sum_{t\in T_m}t^k\,e^{\psi_2\left(d_m\mu^M_{t^{-1}}(\mathbf{v}),\mathbf{w}\right)/\Phi(m)}.\nonumber
\end{eqnarray}
We have used that if $A:\mathbb{C}^d\rightarrow \mathbb{C}^d$ is unitary,
then $\psi_2(\mathbf{u},A\mathbf{t})=\psi_2\left(A^{-1}\mathbf{u},\mathbf{t}\right)$ for any
$\mathbf{u},\,\mathbf{t}\in \mathbb{C}^d$, and the relation
$$
\int_{\mathbb{C}^d}e^{\psi_2(\mathbf{v},\mathbf{u})+\psi_2(\mathbf{u},\mathbf{w})}\,
d\mathcal{L}(\mathbf{u})=\pi^d\,e^{\psi_2(\mathbf{v},\mathbf{w})}.
$$

Finally, when $\mathbf{v}=\mathbf{w}=\mathbf{0}$ the appearance of descending powers
of $k$ in the asymptotic expansion for (\ref{eqn:composition-kernel-scaling-1}) originates from
Taylor expanding the integrand in $\mathbf{u}/\sqrt{k}$; half-integer powers of $k$
are thus associated to odd homogeneous polynomials in $\mathbf{u}$, and therefore the corresponding
contributions to the integral vanish by parity considerations.
\end{proof}

\section{Proof of Theorem \ref{thm:toeplitz-lower-order-terms}}

\begin{proof}
 The proof of Theorem is an adaptation of the one of Theorem \ref{thm:lower-order-terms},
so we'll be very sketchy.
Adopting the same set-up, rather than (\ref{eqn:composition-idempotent}),
(\ref{eqn:composition-idempotent-neighborhood}) and
(\ref{eqn:composition-idempotent-parametrized-1}) we now have
\begin{eqnarray}
\label{eqn:toeplitz-composition-integral}
 T^\mu_k[f]\left(x_0,x_0\right)&=&\int_{X}\,\Pi^\mu_k(x_0,y)\,f(y)\,\Pi^\mu_k(y,x_0)\,dV_X(y)\\
&\sim&\int_{\kappa^{-1}(V)}\,\Pi^\mu_k(x_0,y)\,f(y)\,\Pi^\mu_k(y,x_0)\,dV_X(y)\nonumber\\
&=&
\int_Ve^{-k\,\mathcal{D}_N(n_0,n)}\,\widetilde{K^\mu_k}(n_0,n)\,\widetilde{K^\mu_k}(n,n_0)
\,f(n)\,\Phi(n)^{d+1}\,dV_N(n)\nonumber
\end{eqnarray}
Therefore, we get in place of (\ref{eqn:composition-idempotent-parametrized-2})
and (\ref{eqn:composition-idempotent-parametrized-3}):
\begin{eqnarray}
 \label{eqn:toeplitz-composition-idempotent-parametrized-2}
\lefteqn{T^\mu_k[f]\left(x_0,x_0\right)}\nonumber\\
&\sim&\left(\frac k\pi\right)^{2d}\,\sum_{j\ge 0}k^{-j}\int_B
e^{-k\,\mathcal{D}_N(n_0,n)}\,Z_j(n_0,n)\,f(n)
\,\det \big([\eta'_{k\overline{l}}]\big)\,dx\,dy\nonumber\\
&\sim&\left(\frac k\pi\right)^{d}\,\sum_{j\ge 0}k^{-j}\,
\sum_{a+b=j}\left.R_a^N\Big(Z_b(n_0,\cdot)\,f(\cdot)\Big)\right|_{n=n_0},
\end{eqnarray}
which proves the claim (and reproves Corollary \ref{cor:toeplitz-scaling}).

\end{proof}

\section{Proof of Corollary \ref{cor:toeplitz-lower-order-terms}}

\begin{proof}
Let us simplify notation in the following arguments by setting $f^\mu_j=:S_j^\mu[f]$. 
To begin with, we have from (\ref{eqn:definition-Zjge1}) that
$Z_1(n_0)=2\,S_1^\mu(n_0)$.
We see from (\ref{eqn:toeplitz-composition-idempotent-parametrized-2}) that
\begin{eqnarray}
 \label{eqn:expression-for-f-1-mu}
f_1^\mu(n_0)&=&\left.R_0^N\Big(Z_1(n_0,\cdot)\,f(\cdot)\Big)\right|_{n=n_0}+
\left.R_1^N\Big(Z_0(n_0,\cdot)\,f(\cdot)\Big)\right|_{n=n_0}\\
&=&Z_1(n_0,n_0)\,f(n_0)+
\left.\left(\Delta_N-\frac 12\,\varrho_N\right)\Big(Z_0(n_0,\cdot)\,f(\cdot)\Big)\right|_{n=n_0}\nonumber\\
&=&\left[2\,S_1^\mu(n_0)-\frac 12\,\varrho_N(n_0)\,\Phi (n_0)^{-(d+1)}\right]\,f(n_0)+
\left.\Delta_N\Big(Z_0(n_0,\cdot)\,f(\cdot)\Big)\right|_{n=n_0}.\nonumber
\end{eqnarray}

Now in view of Lemma \ref{lem:technical-1} we have
\begin{eqnarray*}
 \lefteqn{\left.\Delta_N\Big(Z_0(n_0,\cdot)\,f(\cdot)\Big)\right|_{n=n_0}}\nonumber\\
&=&\left.\Delta_N\Big(Z_0(n_0,\cdot)\Big)\right|_{n=n_0}\, f(n_0)+
\Phi (n_0)^{-(d+1)}\,\left.\Delta_N\Big(f(\cdot)\Big)\right|_{n=n_0}.
\end{eqnarray*}
Inserting this in (\ref{eqn:expression-for-f-1-mu}), and recalling (\ref{eqn:relation-for-S-j=0}),  
we obtain
\begin{eqnarray}
 \label{eqn:expression-for-f-1-mu-0}
f_1^\mu(n_0)&=&\Phi (n_0)^{-(d+1)}\,\left.\Delta_N\Big(f(\cdot)\Big)\right|_{n=n_0}+
2\,S_1^\mu(n_0)\,f(n_0)\nonumber\\
&&+\left(-\frac 12\,\varrho_N(n_0)\,\Phi (n_0)^{-(d+1)}
+\left.\Delta_N\Big(Z_0(n_0,\cdot)\Big)\right|_{n=n_0}\right)\,f(n_0)\nonumber\\
&=&\Phi (n_0)^{-(d+1)}\,\left.\Delta_N\Big(f(\cdot)\Big)\right|_{n=n_0}+
S_1^\mu(n_0)\,f(n_0).
\end{eqnarray}
\end{proof}

\section{Proof of Corollary \ref{cor:berezin-transform}}

\begin{proof}
 Notation being as in Definition \ref{defn:equivariant-berezin-transform} and the proof
of Corollary \ref{cor:toeplitz-lower-order-terms}, by Corollaries
\ref{cor:lower-order-terms} and \ref{cor:toeplitz-lower-order-terms} we have
on $M'$
\begin{eqnarray*}
 \mathrm{Ber}^\mu_k[f]&=&
\dfrac{f_0^\mu+k^{-1}\,f_1^\mu+O\left(k^{-2}\right)}{S_0^\mu+k^{-1}\,S_1^\mu+O\left(k^{-2}\right)}
=\dfrac{f_0^\mu}{S_0^\mu}\cdot
\dfrac{1+k^{-1}\,(f_1^\mu/f^\mu_0)+O\left(k^{-2}\right)}{1+k^{-1}\,(S_1^\mu/S^\mu_0)+O\left(k^{-2}\right)}\\
&=&f+k^{-1}\,f\cdot \left(\dfrac{f_1^\mu}{f_0^\mu}
-\dfrac{S_1^\mu}{S_0^\mu}\right)+O\left(k^{-2}\right).
\end{eqnarray*}
Thus $B_0^\mu(f)=f$; furthermore, by Corollary \ref{cor:toeplitz-lower-order-terms}
we have
\begin{eqnarray*}
 B^\mu_1(f)&=&f\cdot \left(\dfrac{f_1^\mu}{f_0^\mu}
-\dfrac{S_1^\mu}{S_0^\mu}\right)=\Phi^{d+1}\,f_1^\mu-f\,\Phi^{d+1}\,S_1^\mu\\
&=&\Delta_N(f)+\Phi^{d+1}\,S_1^\mu\cdot f-f\cdot\Phi^{d+1}\,S_1^\mu=\Delta_N(f).
\end{eqnarray*}
\end{proof}

\section{Proof of Theorem \ref{thm:heisenberg-relation}}
\label{sctn:proof-heisenberg}

Before tackling the proof, let us remark that
considerations similar to those in \S \ref{sctn:sesqui-holomorphic-extensions}
hold for Toeplitz operators. Namely,
if $f\in \mathcal{C}^\infty(M)^\mu$ let $T^\mu_k[f]:H^\mu_k(X)\rightarrow H^\mu_k(X)$ 
and $T^\mu_k[f]\in \mathcal{C}^\infty(X\times X)$ denote both the induced operator and
its Schwartz kernel, given by (\ref{eqn:kernel-topelitz-equiv}). 
The latter extends uniquely to a sesquiholomorphic function
$\mathcal{T}^\mu_k[f]:A^\vee_0\times A^\vee_0\rightarrow\mathbb{C}$, which is
the Toeplitz analogue of
(\ref{eqn:kernel-szego-equiv-extended}); explicitly, it is given by
\begin{equation}
 \label{eqn:kernel-toeplitz-equiv-extended}
\mathcal{T}^\mu_k[f]\left(\lambda,\lambda'\right)=
\sum_j\,\widetilde{T^\mu_k[f]\left(s_j^{(k)}\right)}(\lambda)
\cdot \overline{\widetilde{s}_j^{(k)}(\lambda')}
\,\,\,\,\,\,\,\,\,\left(\lambda,\lambda'\in A^\vee_0\right),
\end{equation}
and satisfies the equivariance law (\ref{eqn:kernel-szego-equiv-extended-bullet}).
Corresponding to (\ref{eqn:key-relation-Pi-P}) we now have
\begin{eqnarray}
\label{eqn:toeplitz-key-relation-Pi-P}
 T^\mu_k[f]\big(\sigma_u(n),\sigma_u(n')\big)=e^{-\frac k2\,\big( \varXi(n)+\varXi(n')\big)}\,
\mathcal{T}^\mu_k[f]\left(\sigma (n),
\sigma (n')\right).
\end{eqnarray}

Let us define $K^\mu_k[f]:X\rightarrow \mathbb{R}$, the Toeplitz analogue of
(\ref{eqn:equivariant-distortion-fnct}), by setting
\begin{equation}
 \label{eqn:toeplitz-distortion-function}
K^\mu_k[f](x)=:T^\mu_k[f](x,x)=\sum_jT^\mu_k[f]\left(s_j^{(k)}\right)(x)\,
\overline{s_j^{(k)}(x)}\,\,\,\,\,\,\,\,\,
(x\in X)
\end{equation}
(since $f$ is real, $T^\mu_k[f]:H^\mu_k(X)\rightarrow H^\mu_k(X)$ is self-adjoint, and
so $T^\mu_k[f](x,x)\in \mathbb{R}$).

Then $K^\mu_k[f]$ descends to a $\nu$-invariant $\mathcal{C}^\varpi$ function 
on $N$, by an obvious analogue of Lemma
\ref{lem:analytic-distortion-function}, and so we can consider its unique
sesquiholomorphic extension
$\widetilde{K^\mu_k[f]}$ to a neighborhood of the diagonal in $N\times N$.
In place of (\ref{eqn:relation-sesquiholomorphic-extensions}) we now have that
\begin{equation}
 \label{eqn:toeplitz-relation-sesquiholomorphic-extensions}
\mathcal{T}^\mu_k[f]\left(\sigma (n),\sigma (n')\right)=
e^{k\,\widetilde{\varXi}(n,n')}\,\widetilde{K^\mu_k[f]}(n,n').
\end{equation}

Finally, a Toeplitz operator $T[f]=\Pi\circ M_f\circ \Pi$ is a zeroth order 
FIO associated to the same almost complex Lagrangian relation as $\Pi$, 
and therefore also has a microlocal structure of the form (\ref{eqn:szego-microlocal}),
with an amplitude having an asymptotic expansion as in (\ref{eqn:szego-microlocal-amplitude}).
Repeating the arguments following (\ref{eqn:equivariant-szego-kernel-asymptotics}), 
therefore, leads to the Toeplitz generalization
of the asymptotic expansion (\ref{eqn:asymptotic-expansion-Pi-k-n-n'-1})
and (\ref{eqn:asymptotic-expansion-K-k-mu}):
\begin{eqnarray}
 \label{eqn:toeplitz-asymptotic-expansion-Pi-k-n-n'-1}
T^\mu_k[f]\big(\sigma_u(n),\sigma_u(n')\big)&\sim&
\left(\frac k\pi\right)^d\,e^{k\,\left[
\widetilde{\varXi}(n,n')-\frac 12\,\big( \varXi(n)+\varXi(n')\big)\right]
}\nonumber\\
&&\cdot\sum_{j\ge 0}k^{-j}\,\widetilde{S^\mu_j[f]}(n,n'),
\end{eqnarray}
\begin{equation}
 \label{eqn:toeplitz-asymptotic-expansion-K-k-mu}
\widetilde{K^\mu_k}[f](n,n')\sim \left(\frac k\pi\right)^d\,\sum_{j\ge 0}k^{-j}\,\widetilde{S^\mu_j[f]}(n,n')
\end{equation}

Let us prove Theorem \ref{thm:heisenberg-relation}.

\begin{proof}
Let us adopt the notation and setting of the proof of Theorem
\ref{thm:lower-order-terms}.
 Given (\ref{eqn:toeplitz-asymptotic-expansion-Pi-k-n-n'-1}), arguing as in the derivation
of (\ref{eqn:composition-idempotent-parametrized-2}) we now obtain
\begin{eqnarray}
 \label{eqn:composition-toeplitz-parametrized-2}
\lefteqn{E^\mu_k[f,g](x_0,x_0)=\Big(T^\mu_k[f]\circ T^\mu_k[g]\Big)(x_0,x_0)}\\
&\sim&\left(\frac k\pi\right)^{2d}\,\sum_{j\ge 0}k^{-j}\int_B
e^{-k\,\mathcal{D}_N(n_0,n)}\,Z_j[f,g](n_0,n)
\,\det \big([\eta'_{k\overline{l}}]\big)\,dx\,dy,\nonumber
\end{eqnarray}
where now
\begin{eqnarray}
 \label{eqn:definition-Zj-toeplitz}
\lefteqn{Z_j[f,g](n,n')}\\
&=:&\Phi(n')^{d+1}\,\sum_{a+b=j}\widetilde{S_a^\mu[f]}(n,n')\,
\widetilde{S_b^\mu[g]}(n',n)\,\,\,\,\,\,\,\,\,\,\,
\big((n,n')\in V\times V\big).\nonumber
\end{eqnarray}

Corresponding to (\ref{eqn:composition-idempotent-parametrized-3}), we now have
\begin{eqnarray}
 \label{eqn:composition-toeplitz-parametrized-3}
\lefteqn{E^\mu_k[f,g](x_0,x_0)
\sim\left(\frac k\pi\right)^{d}\,\sum_{j,l\ge 0}k^{-j-l}\,
\left.R_l^N\Big(Z_j[f,g](n_0,\cdot)\Big)\right|_{n=n_0}}\\
&=&\left(\frac k\pi\right)^{d}\,\sum_{j\ge 0}k^{-j}\,
\sum_{a+b=j}\left.R_a^N\Big(Z_j[f,g](n_0,\cdot)\Big)\right|_{n=n_0}\nonumber\\
&=&\left(\frac k\pi\right)^{d}\,\Big\{Z_0[f,g](n_0,n_0)+k^{-1}\,A_1[f,g](n_0)
+O\left(k^{-2}\right)
\Big\},\nonumber
\end{eqnarray}
where
\begin{equation}
\label{eqn:defn-A-1}
A_1[f,g](n_0)=: \left.R_1^N\Big(Z_0[f,g](n_0,\cdot)\Big)\right|_{n=n_0}+Z_1[f,g](n_0,n_0)
\end{equation}

Now, by (\ref{eqn:definition-Zj-toeplitz}), we have
\begin{eqnarray}
 \label{eqn:Z-0-toeplitz}
Z_0[f,g](n_0,n_0)&=&\Phi(n_0)^{d+1}\,S_a^\mu[f](n_0)\,
S_b^\mu[g](n_0)\nonumber\\
&=&\Phi(n_0)^{-(d+1)}\,f(n_0)\,g(n_0)=Z_0[g,f](n_0,n_0);
\end{eqnarray}
therefore, 
\begin{eqnarray}
 \label{eqn:E-1-commutator}
\lefteqn{E^\mu_k[f,g](x_0,x_0)-E^\mu_k[g,f](x_0,x_0)}\nonumber\\
&=&\left(\frac k\pi\right)^d\,
\left[k^{-1}\Big(A_1[f,g](n_0)-A_1[g,f](n_0)\Big)+O\left(k^{-2}\right)\right].
\end{eqnarray}

Furthermore, by (\ref{eqn:definition-Zj-toeplitz}) we have
\begin{eqnarray}
 \label{eqn:Z-1-toeplitz}
Z_1[f,g](n_0,n_0)&=&\Phi(n_0)^{d+1}\,\Big[S_0^\mu[f](n_0)\,
S_1^\mu[g](n_0)+S_1^\mu[f](n_0)\,
S_0^\mu[g](n_0)\Big]\nonumber\\
&=&Z_1[g,f](n_0,n_0).
\end{eqnarray}
We see from (\ref{eqn:defn-A-1}) and (\ref{eqn:Z-1-toeplitz}) that
\begin{eqnarray}
 \label{eqn:A_1-commutator}
\lefteqn{A_1[f,g](n_0)-A_1[g,f](n_0)}\\
&=&\left.R_1^N\Big(Z_0[f,g](n_0,\cdot)\Big)\right|_{n=n_0}-
\left.R_1^N\Big(Z_0[g,f](n_0,\cdot)\Big)\right|_{n=n_0}\nonumber\\
&=&\left.\Delta_N\Big(Z_0[f,g](n_0,\cdot)\Big)\right|_{n=n_0}-
\left.\Delta_N\Big(Z_0[g,f](n_0,\cdot)\Big)\right|_{n=n_0};\nonumber
\end{eqnarray}
in the latter equality we have used that $R_1^N=\Delta_N-\varrho_N/2$ and
(\ref{eqn:Z-0-toeplitz}).

To compute the latter commutator, let us remark that
\begin{eqnarray}
 \label{eqn:Z-0-toeplitz-general}
Z_0[f,g](n_0,n)&=&\Phi(n)^{d+1}\,\widetilde{S_0^\mu[f]}(n_0,n)\,
\widetilde{S_0^\mu[g]}(n,n_0)\nonumber\\
&=&Z_0(n_0,n)\,\widetilde{f}(n_0,n)\,\widetilde{g}(n,n_0),
\end{eqnarray}
where $Z_0(n_0,n)$ is as in (\ref{eqn:definition-Z0}).

It follows from (\ref{eqn:Z-0-toeplitz-general}) and Lemma \ref{lem:technical-1}
that
\begin{eqnarray}
 \label{eqn:commutator-laplacian-term-A-1}
\left.\Delta_N\Big(Z_0[f,g](n_0,\cdot)\Big)\right|_{n=n_0}
&=&
\left.\Delta_N\Big(Z_0(n_0,\cdot)\Big)\right|_{n=n_0}\cdot f(n_0)\,g(n_0)\\
&&+
Z_0(n_0,n_0)\,\left.\Delta_N
\Big(\widetilde{f}(n_0,\cdot)\,\widetilde{g}(\cdot,n_0)\Big)\right|_{n=n_0}.\nonumber
\end{eqnarray}

Let $\big({h'}^{\overline{r}s}\big)$ be the contravariant metric tensor of
$(N',I,\eta')$, where $\eta'=2\,\eta$ (thus ${h'}^{\overline{r}s}=h^{\overline{r}s}/2$).
Since the former summand on the right hand side of (\ref{eqn:commutator-laplacian-term-A-1})
is symmetric in $f$ and $g$, we have
\begin{eqnarray}
 \label{eqn:A_1-commutator-0}
\lefteqn{A_1[f,g](n_0)-A_1[g,f](n_0)}\nonumber\\
&=&\Phi(n_0)^{-(d+1)}\,\left[\left.\Delta_N
\Big(\widetilde{f}(n_0,\cdot)\,\widetilde{g}(\cdot,n_0)\Big)\right|_{n=n_0}-
\left.\Delta_N
\Big(\widetilde{g}(n_0,\cdot)\,\widetilde{f}(\cdot,n_0)\Big)\right|_{n=n_0}\right]\nonumber\\
&=&\Phi(n_0)^{-(d+1)}\,{h'}^{\overline{r}s}\,
\Big(\partial_{\overline{r}}f (n_0)\,\partial_s g(n_0)-\partial_{\overline{r}}g (n_0)\,\partial_s f(n_0)\Big)
\nonumber\\
&=&-i\,\Phi(n_0)^{-(d+1)}\,\{f,g\}_N,
\end{eqnarray}
where in the latter step we have used (\ref{eqn:poisson-bracket-kahler}).
The last equality in the statement now follows from (\ref{eqn:A_1-commutator-0})
and Corollary \ref{cor:poisson-brackets-MN}.

\end{proof}

\end{document}